\newtheorem{theo}{Theorem}[section]
\newtheorem{lemm}[theo]{Lemma}
\newtheorem{corr}[theo]{Corollary}
\newtheorem{prop}[theo]{Proposition}
\numberwithin{equation}{section}
\theoremstyle{definition}
\newtheorem{defi}[theo]{Definition}
\newtheorem{rema}[theo]{Remark}
\newtheorem{convention}[theo]{Convention}
\newcommand{\dv}{{\rm div}\,}
\newcommand{\supp}{{\rm supp}\,}
\newcommand{\BR}{\mathbb{R}}
\newcommand{\BC}{\mathbb{C}}
\newcommand{\BN}{\mathbb{N}}
\newcommand{\BP}{\mathbb{P}}
\newcommand{\CA}{\mathcal{A}}
\newcommand{\CB}{\mathcal{B}}
\newcommand{\CD}{\mathcal{D}}
\newcommand{\CF}{\mathcal{F}}
\newcommand{\CL}{\mathcal{L}}
\newcommand{\CR}{\mathcal{R}}
\newcommand{\CS}{\mathcal{S}}
\newcommand{\CT}{\mathcal{T}}
\newcommand{\CX}{\mathcal{X}}
\newcommand{\pd}{\partial}
\renewcommand{\L}{\mathrm{L}}
\renewcommand{\H}{\mathrm{H}}
\newcommand{\B}{\mathrm{B}}
\newcommand{\W}{\mathrm{W}}
\newcommand{\C}{\mathrm{C}}
\newcommand{\loc}{\mathrm{loc}}
\renewcommand{\Re}{\mathrm{Re}}
\newcommand{\dx}{\,\mathrm{d}x}
\newcommand{\dt}{\,\mathrm{d}t}
\newcommand{\I}{\mathrm{Id}}
\newcounter{teller}
\begin{document}
\title[The Navier--Stokes equations in exterior Lipschitz domains: $\L^p$-theory]
{The Navier--Stokes equations in exterior Lipschitz domains: $\L^p$-theory}	
\author{Patrick Tolksdorf}
\author{KEIICHI WATANABE}
	
\address{Universit\'e Paris-Est Cr\'eteil, LAMA UMR CNRS 8050, France}
\email{patrick.tolksdorf@u-pec.fr}
\address{Department of Pure and Applied Mathematics, Graduate School of
Fundamental Science and Engineering, Waseda University,
3-4-1 Ookubo,Shinjuku-ku, Tokyo,169-8555, Japan}
\subjclass[2010]
{Primary: 35Q30; Secondary: 76D05}
\email{keiichi-watanabe@akane.waseda.jp}
\thanks{The first author was partially supported by the project ANR INFAMIE (Universit\'e Paris-Est Cr\'eteil) and the second author was supported by Grant-in-Aid for JSPS Fellows 19J10168 and Top Global University Project (Waseda University).
}
\keywords
{Navier--Stokes equations, Stokes semigroup, Lipschitz domains, exterior domains, $\CR$-bounded.
}

\maketitle
\begin{abstract}
We show that the Stokes operator defined on $\L^p_{\sigma} (\Omega)$ for an exterior Lipschitz domain $\Omega \subset \BR^n$ $(n \geq 3)$ admits maximal regularity provided that $p$ satisfies $| 1/p - 1/2| < 1/(2n) + \varepsilon$ for some $\varepsilon > 0$.
In particular, we prove that the negative of the Stokes operator generates a bounded analytic semigroup on $\L^p_\sigma (\Omega)$ for such $p$.
In addition, $\L^p$-$\L^q$-mapping properties of the Stokes semigroup and its gradient with optimal decay estimates are obtained. This enables us to prove the existence of mild solutions to the Navier--Stokes equations in the critical space $\L^{\infty} (0 , T ; \L^3_{\sigma} (\Omega))$ (locally in time and globally in time for small initial data).
\end{abstract}

\section{Introduction}
\noindent
Let $\Omega$ be an exterior Lipschitz domain in $\BR^n$ $(n\geq 3)$, {\em i.e.}, the complement of a bounded Lipschitz domain.
In this paper, we investigate the Stokes resolvent problem subject to homogeneous Dirichlet boundary conditions
\begin{align}
\label{Eq: Stokes resolvent problem}
\left\{\begin{aligned}
\lambda u - \Delta u + \nabla \pi & = f && \text{in } \Omega, \\
\dv (u) & = 0 && \text{in } \Omega, \\
u & = 0 && \text{on } \pd\Omega,
\end{aligned}\right.
\end{align}
where $u={}^\top\!(u_1, \dots, u_n) : \Omega \to \BC^n$ and $\pi : \Omega \to \BC$ are the unknown velocity field and the pressure, respectively. The right-hand side $f$ is supposed to be divergence-free and $\L^p$-integrable for an appropriate number $1 < p < \infty$ and the resolvent parameter $\lambda$ is supposed to be contained in a sector $\Sigma_{\theta} := \{ z \in \BC \setminus \{ 0 \} |\ \lvert \arg(z) \rvert < \theta \}$ with $\theta \in (0 , \pi)$. \par
In the case of bounded Lipschitz domains, resolvent bounds were proven by Shen~\cite{She} for numbers $p$ satisfying the condition
\begin{align}
\label{cond-p}
 \Big\lvert \frac{1}{p} - \frac{1}{2} \Big\rvert < \frac{1}{2 n} + \varepsilon,
\end{align}
where $\varepsilon > 0$ is a number that only depends on the dimension $n$, the opening angle $\theta$, and quantities describing the Lipschitz geometry. A corollary of Shen's result is that the negative of the Stokes operator generates a bounded analytic semigroup. This was an affirmative answer to a problem posed by Taylor in~\cite{T}. Recently, the study of the Stokes operator on bounded Lipschitz domains was continued by Kunstmann and Weis~\cite{KW} and the first author of this article~\cite{Tol, Tol-phd}. In~\cite{KW}, the property of maximal regularity and the boundedness of the $\H^{\infty}$-calculus were established yielding a short proof to reveal the domain of the square root of the Stokes operator as $\W^{1 , p}_{0 , \sigma} (\Omega)$, see~\cite{Tol}. \par
The purpose of this paper is to continue the study of the Stokes operator in the case of exterior Lipschitz domains $\Omega$. If $\Omega$ has a connected boundary, the existence of the Helmholtz decomposition was proven by Lang and M\'endez in~\cite{LM}. More precisely, Lang and M\'endez proved that the Helmholtz projection, {\em i.e.}, the orthogonal projection $\BP$ of $\L^2(\Omega ; \BC^n)$ onto $\L^2_{\sigma} (\Omega)$, defines a bounded projection from $\L^p(\Omega ; \BC^n)$ onto $\L^p_{\sigma}(\Omega)$ for $p \in (3 / 2 , 3)$. Here and below, $\L^p_{\sigma} (\Omega)$ denotes the closure of
\begin{align*}
 \C_{c , \sigma}^{\infty} (\Omega) := \{ \varphi \in \C_c^{\infty} (\Omega ; \BC^n) |\ \dv (\varphi) = 0 \}
\end{align*}
in $\L^p(\Omega ; \BC^n)$. As we will see in Subsection~\ref{Sec: The Helmholtz projection on exterior Lipschitz domains} --- and this will be crucial for the whole proof --- a short analysis of the proof of Lang and M\'endez shows the validity of this result for $p \in (3 / 2 - \varepsilon , 3 + \varepsilon)$ and some $\varepsilon > 0$. We will investigate the Stokes operator, which is defined on $\L^2_{\sigma} (\Omega)$ by using a sesquilinear form, see~\cite[Ch.~4]{MM}. On $\L^p_{\sigma} (\Omega)$ for $1 < p < \infty$, the Stokes operator $A_p$ is defined in two steps. First, take the part of $A_2$ in $\L^p_{\sigma} (\Omega)$, \textit{i.e.},
\begin{align*}
 \CD(A_2|_{\L^p_{\sigma}}) := \{ u \in \CD(A_2) \cap \L^p_{\sigma} (\Omega) : A_2 u \in \L^p_{\sigma} (\Omega) \}
\end{align*}
and $A_2|_{\L^p_{\sigma}} u$ is given by $A_2 u$ for $u$ in its domain. Notice that $A_2|_{\L^p_{\sigma}}$ is densely defined since $\C_{c , \sigma}^{\infty} (\Omega) \subset \CD(A_2|_{\L^p_{\sigma}})$ and that it is closable. In the second step, we define $A_p$ to be the closure of $A_2|_{\L^p_{\sigma}}$ in $\L^p_{\sigma} (\Omega)$. \par 
The first main result of this article is the following theorem, which includes an affirmative answer to Taylor's conjecture~\cite{T} in the case of exterior Lipschitz domains.

\begin{theo}
\label{th-1.1}
Let $\Omega$ be an exterior Lipschitz domain in $\BR^n$ $(n\geq 3)$. Then there exists $\varepsilon > 0$ such that for all numbers $p$ that satisfy~\eqref{cond-p} the Stokes operator $A_p$ is closed and densely defined, its domain continuously embeds into $\W^{1 , p}_{0 , \sigma} (\Omega)$, and $- A_p$ generates a bounded analytic semigroup $(T(t))_{t \geq 0}$ on $\L^p_{\sigma} (\Omega)$. Furthermore, for all $1 < p \leq q < \infty$ that both satisfy~\eqref{cond-p} the semigroup $T(t)$ maps for $t > 0$ the space $\L^p_{\sigma} (\Omega)$ continuously into $\L^q_{\sigma} (\Omega)$. Moreover, there exists a constant $C > 0$ such that
\begin{align}
\label{Eq: Lp-Lq estimates}
 \| T(t) f \|_{\L^q_{\sigma} (\Omega)} \leq C t^{- \frac{n}{2} (\frac{1}{p} - \frac{1}{q})} \| f \|_{\L^p_{\sigma} (\Omega)} \qquad (t > 0 , f \in \L^p_{\sigma} (\Omega)).
\end{align}
If $p$ and $q$ satisfy additionally $p \leq 2$ and $q < n$ there exists a constant $C > 0$ such that
\begin{align}
\label{Eq: Lp-Lq gradient estimates}
 \| \nabla T(t) f \|_{\L^q(\Omega ; \BC^{n^2})} \leq C t^{- \frac{1}{2} - \frac{n}{2} (\frac{1}{p} - \frac{1}{q})} \| f \|_{\L^p_{\sigma} (\Omega)} \qquad (t > 0 , f \in \L^p_{\sigma} (\Omega)).
\end{align}
\end{theo}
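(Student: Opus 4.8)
The plan is to derive everything from uniform resolvent estimates for $A_p$ on $\L^p_\sigma(\Omega)$: once one has
\[\|\lambda(\lambda + A_p)^{-1}f\|_{\L^p_\sigma(\Omega)} + |\lambda|^{1/2}\,\|\nabla(\lambda + A_p)^{-1}f\|_{\L^p(\Omega)}\ \lesssim\ \|f\|_{\L^p_\sigma(\Omega)}\qquad(\lambda\in\Sigma_\theta)\]
for some $\theta>\pi/2$ and all $p$ in the range \eqref{cond-p}, the generation of a bounded analytic semigroup is immediate, closedness and density hold by the very definition of $A_p$ as a closure, and the gradient term yields the embedding $\CD(A_p)\hookrightarrow\W^{1,p}_{0,\sigma}(\Omega)$ (fix $\lambda=1$ and write $f=(1+A_p)u$). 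The core task is thus the displayed bound, which I would obtain by gluing the two model geometries for which it is already known: the whole space $\BR^n$, where the Stokes resolvent is controlled for every $1<p<\infty$ by Fourier analysis, and a bounded Lipschitz domain, where Shen's estimates \cite{She} hold precisely under \eqref{cond-p}.

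Concretely, fix a large ball $B_R\supset\BR^n\setminus\Omega$ and a cut-off $\varphi$ with $\varphi\equiv1$ on $B_R$ and $\supp\varphi\subset B_{2R}$. Given $f$, solve the bounded-domain Stokes resolvent problem on a Lipschitz domain containing $\supp\varphi$ with data $\varphi f$, and the whole-space problem with data $(1-\varphi)f$, obtaining $(u_0,\pi_0)$ and $(u_\infty,\pi_\infty)$, and set $u=\varphi u_0+(1-\varphi)u_\infty$. The nuisance is that this ansatz is no longer solenoidal: $\dv u=(\nabla\varphi)\cdot(u_0-u_\infty)$ is supported in the annulus $B_{2R}\setminus B_R$ and has vanishing mean. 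I correct it with the Bogovski\u\i\ operator $\CB$ on that annulus, which produces a compactly supported field with the prescribed divergence and the expected $\W^{1,p}$-bounds; subtracting $\CB(\dv u)$ restores the constraint. Inserting the corrected ansatz into \eqref{Eq: Stokes resolvent problem} leaves a remainder built from the commutators $[\Delta,\varphi]u_\bullet$ and the Bogovski\u\i\ contributions, all supported in the fixed compact annulus and of strictly lower order in $\lambda$; for $|\lambda|$ large this is absorbed directly, while the bounded range of $\lambda$ is handled by a Neumann-series/Fredholm argument together with the injectivity of $\lambda+A_p$, closing the resolvent bound.

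With analyticity in hand, the estimate \eqref{Eq: Lp-Lq estimates} splits into a small-time and a large-time regime. For $0<t\le1$ it follows from parabolic smoothing: analyticity gives $\|A_pT(t)\|_{\CL(\L^p_\sigma(\Omega))}\lesssim t^{-1}$, whence $\|\nabla T(t)f\|_{\L^p(\Omega)}\lesssim t^{-1/2}\|f\|_{\L^p_\sigma(\Omega)}$ by the domain embedding, and the Sobolev inequality $\|g\|_{\L^{p^*}(\Omega)}\lesssim\|\nabla g\|_{\L^p(\Omega)}$ for $g\in\W^{1,p}_{0,\sigma}(\Omega)$ (with $p^*=np/(n-p)$) yields $\|T(t)f\|_{\L^{p^*}(\Omega)}\lesssim t^{-1/2}\|f\|_{\L^p_\sigma(\Omega)}$; interpolating this against the uniform $\L^p_\sigma$-bound, and if necessary composing two such steps through an intermediate exponent still satisfying \eqref{cond-p}, produces exactly the exponent of \eqref{Eq: Lp-Lq estimates} on the whole range $p\le q$.

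The genuinely hard part is the large-time regime $t\ge1$, where $-\frac{n}{2}(\frac1p-\frac1q)$ encodes actual decay rather than smoothing, and this is the main obstacle. I would again localize, comparing $T(t)f$ near infinity with the whole-space Stokes semigroup,
\[T(t)f = \varphi\,T_{B}(t)(\varphi f) + (1-\varphi)\,e^{t\Delta}\BP_{\BR^n}\big((1-\varphi)f\big) + (\text{coupling}),\]
whose $\L^p$-$\L^q$ decay is classical, and controlling the coupling, which lives on a fixed compact set, by a Duhamel formula that feeds the whole-space rate back into the remainder. Showing that this compactly supported feedback does not destroy the whole-space rate is the crux, precisely because Lipschitz regularity rules out the second-order elliptic estimates and explicit low-frequency resolvent expansions available in the smooth case, so the argument must be run purely on the $\W^{1,p}$-level resolvent bounds from above. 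The gradient bounds \eqref{Eq: Lp-Lq gradient estimates} follow along the same lines with one extra derivative, giving the additional factor $t^{-1/2}$; the restriction $q<n$ reflects the failure of $\W^{1,q}$ to embed into $\L^\infty$ together with the range of the Helmholtz projection, while $p\le2$ enters through the duality pairing used to close the large-time gradient estimate. Assembling the two regimes and interpolating yields \eqref{Eq: Lp-Lq estimates} and \eqref{Eq: Lp-Lq gradient estimates}.
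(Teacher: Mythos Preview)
Your resolvent strategy is the paper's strategy: build a parametrix from the bounded-domain and whole-space solutions, correct the divergence with Bogovski\u\i, and invert $\I+T_\lambda$ by Neumann series for large $|\lambda|$ and by Fredholm theory for small $|\lambda|$. Two points are underdeveloped but not wrong in spirit. First, the small-$|\lambda|$ case is more delicate than ``injectivity of $\lambda+A_p$'': one must let $\lambda\to 0$ and show that the limit operator $\I+T_0$ is still injective, which in the paper requires an argument that exploits the support of $T_0 f$ in a fixed annulus to bootstrap integrability and eventually test against the weak formulation. Second, the pressure from the bounded-domain piece enters $T_\lambda$ and must be shown to decay in $|\lambda|$; this is a separate lemma in the paper and cannot be absorbed into ``lower order''.

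The real gap is your treatment of \eqref{Eq: Lp-Lq estimates} for $t\ge 1$. You propose a semigroup-level Duhamel comparison with the whole-space flow and concede that controlling the compactly supported feedback ``is the crux'' without closing it. The paper avoids this difficulty entirely, and the mechanism is available to you: the parametrix $U_\lambda$ is built from model solutions that themselves satisfy $\L^p$--$\L^q$ resolvent bounds (Sobolev embedding on the bounded piece, direct Fourier estimates on $\BR^n$), so
\[
|\lambda|^{1-\sigma}\|U_\lambda f\|_{\L^q_\sigma(\Omega)}\lesssim \|f\|_{\L^p(\Omega)},\qquad \sigma=\tfrac{n}{2}\bigl(\tfrac1p-\tfrac1q\bigr),
\]
uniformly in $\lambda\in\Sigma_\theta$. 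Since your Fredholm argument already gives a uniform bound on $(\I+T_\lambda)^{-1}$ down to $\lambda=0$, composition yields the $\L^p$--$\L^q$ resolvent estimate for all $\lambda$, and the Cauchy integral formula for $T(t)$ converts it into \eqref{Eq: Lp-Lq estimates} for all $t>0$ at once. No separate large-time analysis is needed. The gradient estimate \eqref{Eq: Lp-Lq gradient estimates} comes the same way, with the constraint $q<n$ arising because the $\L^p$--$\L^q$ gradient bound on $U_\lambda$ uses Sobolev's inequality on the localized pieces, and $p\le 2$ because the required range is reached by complex interpolation against the trivial $\L^2$ gradient estimate obtained by testing.
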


To state the second main result consider the Cauchy problem
\begin{align}
\label{Eq: Instationary system}
\left\{ \begin{aligned}
 \partial_t u +  A_p u &= f && \text{in } (0 , \infty) \times \Omega, \\
 u(0) &= u_0 && \text{in } \Omega.
\end{aligned} \right.
\end{align}
Given $s \in (1 , \infty)$, the Stokes operator $A_p$ is said to admit maximal $\L^s$-regularity if there exists a constant $C > 0$ such that for all $u_0$ in the real interpolation space $(\L^p_{\sigma} (\Omega) , \CD(A_p))_{1 - 1 / s , s}$ and for all $f \in \L^s (0 , \infty ; \L^p_{\sigma} (\Omega))$ the system~\eqref{Eq: Instationary system} has a unique solution $u$, that is almost everywhere differentiable in time, that satisfies $u(t) \in \CD(A_p)$ for almost every $t > 0$, and
\begin{align*}
 \| \partial_t u \|_{\L^s (0 , \infty ; \L^p_{\sigma} (\Omega))} + \| A_p u \|_{\L^s (0 , \infty ; \L^p_{\sigma} (\Omega))} \leq C \big( \| f \|_{\L^p_{\sigma} (\Omega)} + \| u_0 \|_{(\L^p_{\sigma} (\Omega) , \CD(A_p))_{1 - 1 / s , s}} \big).
\end{align*}
It is well-known, see~\cite{Cannarsa_Vespri, Denk_Hieber_Pruess}, that maximal $\L^s$-regularity is independent of $s$ and will thus be called maximal regularity. We have the following theorem.

\begin{theo}
\label{Thm: Maximal regularity}
Let $\Omega$ be an exterior Lipschitz domain in $\BR^n$ $(n\geq 3)$. Then there exists $\varepsilon > 0$ such that for all numbers $p$ that satisfy~\eqref{cond-p} the Stokes operator $A_p$ has maximal regularity.
\end{theo}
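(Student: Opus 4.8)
The plan is to reduce maximal regularity to $\CR$-sectoriality and then to establish the latter. Since $\L^p_\sigma(\Omega)$ is a closed subspace of $\L^p(\Omega ; \BC^n)$ it is a UMD space for $1 < p < \infty$, so by the characterisation of maximal regularity on UMD spaces (Weis's theorem, see~\cite{Denk_Hieber_Pruess}) it suffices to prove that $A_p$ is $\CR$-sectorial of angle strictly less than $\pi/2$, i.e.\ that the family $\{ \lambda (\lambda + A_p)^{-1} : \lambda \in \Sigma_\theta \}$ is $\CR$-bounded for some $\theta \in (\pi/2 , \pi)$. Theorem~\ref{th-1.1} already supplies the uniform operator-norm sectoriality of this family together with the decay estimates~\eqref{Eq: Lp-Lq estimates}--\eqref{Eq: Lp-Lq gradient estimates}; the task is to upgrade the uniform bound to an $\CR$-bound, which I would do by revisiting the resolvent construction behind Theorem~\ref{th-1.1} and carrying it out with $\CR$-norms, using that $\CR$-boundedness is preserved under sums, composition with a fixed bounded operator, and absolutely convergent Neumann series.

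The construction glues two model resolvents, both of which are in fact $\CR$-sectorial. On the whole space $\BR^n$ the Stokes resolvent is $\BP (\lambda - \Delta)^{-1}$, whose Fourier symbol $\lambda (\lambda + \lvert \xi \rvert^2)^{-1} (\I - \xi \xi^\top / \lvert \xi \rvert^2)$ satisfies the hypotheses of the operator-valued Mikhlin multiplier theorem uniformly for $\lambda \in \Sigma_\theta$, giving $\CR$-sectoriality of angle $0$ for every $1 < p < \infty$. On a bounded Lipschitz domain, Kunstmann and Weis~\cite{KW} proved a bounded $\H^\infty$-calculus, hence $\CR$-sectoriality, on precisely the range~\eqref{cond-p}. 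Combining these by a partition of unity subordinate to a bounded near field and an unbounded far field, and inserting a Bogovskii-type operator to restore the divergence constraint destroyed by the cut-offs, produces a parametrix for $(\lambda + A_p)^{-1}$ whose $\CR$-bound is controlled by the $\CR$-bounds of the two models and by the $\lambda$-independent operators entering the gluing (cut-offs, the Bogovskii operator, and the Helmholtz projection $\BP$, whose boundedness on the required range of $p$ is the extension of Lang--M\'endez recorded in Subsection~\ref{Sec: The Helmholtz projection on exterior Lipschitz domains}). The binding restriction on $p$ is~\eqref{cond-p}, which is exactly the range on which $\BP$ and both model resolvents are simultaneously available.

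It then remains to pass from the parametrix to the exact resolvent, which I would organise by splitting the sector into three regimes. For $\lvert \lambda \rvert \geq R$ the commutator remainders generated by the cut-offs are of lower order and have small $\CR$-bound, so $(\lambda + A_p)^{-1}$ is recovered from the parametrix by an $\CR$-bounded Neumann series. On a compact annulus $r \leq \lvert \lambda \rvert \leq R$ no work is needed: $\lambda \mapsto \lambda (\lambda + A_p)^{-1}$ is norm-continuous there by analyticity of the resolvent, its image is therefore norm-compact, and every norm-compact family of operators is $\CR$-bounded. The delicate regime is $\lvert \lambda \rvert \leq r$, i.e.\ low frequencies and the far field, where the Neumann series no longer converges because the remainder is not small; this is where exterior-domain effects are felt and where I expect the main obstacle to lie. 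Here one must invert $I + (\text{remainder})$ by a means other than a geometric series --- exploiting the decay estimates~\eqref{Eq: Lp-Lq estimates}--\eqref{Eq: Lp-Lq gradient estimates} of Theorem~\ref{th-1.1} to control the far-field part of the parametrix and to exhibit the correction term as a fixed $\CR$-bounded family uniformly down to $\lambda = 0$ --- and one must check that the Bogovskii correction does not spoil $\CR$-boundedness in this limit.
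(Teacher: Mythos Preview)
Your overall strategy matches the paper's: reduce to Weis's characterisation, build an $\CR$-bounded parametrix by gluing the whole-space and bounded-domain Stokes resolvents with cut-offs and a Bogovski\u{\i} correction, then invert $\I + T_\lambda$ regime by regime. The large-$\lvert\lambda\rvert$ argument via a Neumann series with small $\CR$-bound is exactly what the paper does (Lemma~\ref{lem-4.1} and Corollary~\ref{Cor: Control for large lambda}). Your middle-annulus shortcut---norm-continuity on a compact set implies norm-compactness implies $\CR$-boundedness---is valid and slightly different from the paper, which simply absorbs this region into the small-$\lambda$ argument; either works.

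The genuine gap is the small-$\lvert\lambda\rvert$ regime. You correctly flag it as the hard part but your proposed mechanism (use the semigroup decay estimates~\eqref{Eq: Lp-Lq estimates}--\eqref{Eq: Lp-Lq gradient estimates} to control the far field) is not what makes the argument go through, and it is not clear how it would. The paper's route is Fredholm theory: the remainder $T_\lambda$ is compact on $\L^p(\Omega;\BC^n)$ because it is regularising and has range supported in the fixed compact annulus $\overline{K_1}$ (Lemma~\ref{Lem: Properties of T_lambda}); one then shows that $\lambda\mapsto T_\lambda$ extends continuously in $\CR$-norm to $\lambda=0$ (Proposition~\ref{Prop: The operator T_0}), proves that $\I+T_\lambda$ is injective for every $\lambda\in\Sigma_\theta\cup\{0\}$ by a uniqueness argument for the Stokes system (Proposition~\ref{Prop: Injectivity of I + T_lambda}), and concludes invertibility by the Fredholm alternative. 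Uniform $\CR$-boundedness of $(\I+T_\lambda)^{-1}$ on $\overline{\Sigma_\theta\cap B_{\lambda^*}(0)}$ then comes from a finite covering by balls on which a local Neumann series is available (Lemma~\ref{Lem: Control for small lambda}). None of the ingredients---compactness of $T_\lambda$, existence of the limit $T_0$, injectivity of $\I+T_0$---appears in your sketch.

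Two further points you are missing. First, the small-$\lambda$ analysis in the paper only works for $p<n/2$ (this constraint enters in Lemma~\ref{Lem: Convergences for lambda to zero} and Proposition~\ref{Prop: The operator T_0}); the full range~\eqref{cond-p} is obtained afterwards by complex interpolation with $p=2$ (where $\CR$-boundedness is automatic) and duality. Second, a key technical input for the large-$\lambda$ Neumann series is decay in $\lambda$ of the pressure $\pi_\lambda^D$ on the bounded domain (Proposition~\ref{prop-3.6}), which is needed because the gluing spits out a pressure term $(\nabla\varphi)\pi_\lambda^D$ in the remainder; this is not covered by the $\H^\infty$-calculus of~\cite{KW} alone.
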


Our last main result concerns the existence of mild solutions to the three-dimensional Navier--Stokes equations
\begin{align}
\label{Eq: Navier-Stokes equations}
\left\{\begin{aligned}
\partial_t u - \Delta u + (u \cdot \nabla) u + \nabla \pi & = 0 && \text{in } (0 , T) \times \Omega, \\
\dv (u) & = 0 && \text{in } (0 , T) \times \Omega, \\
u & = 0 && \text{on } (0 , T) \times \pd\Omega, \\
u(0) &= a && \text{in } \Omega.
\end{aligned}\right.
\end{align}
Given $a \in \L^p_{\sigma} (\Omega)$, we say that a continuous function $u \in \C ([0 , T) ; \L^p_{\sigma} (\Omega))$ is a mild solution to~\eqref{Eq: Navier-Stokes equations} if $u(0) = a$ and if
\begin{align*}
 u (t) = T(t) a - \int_0^t T(t - s) \BP \dv (u(s) \otimes u(s)) \; \mathrm{d} s \qquad (0 < t < T).
\end{align*}
Relying on Theorem~\ref{th-1.1}, we obtain the following theorem.

\begin{theo}
\label{Thm: Navier-Stokes}
Let $\Omega \subset \BR^3$ be an exterior Lipschitz domain, $\varepsilon > 0$ be as in Theorem~\ref{th-1.1}, and $3 \leq r < \min\{3 + \varepsilon , 4\}$. For $a \in \L^r_{\sigma} (\Omega)$ the following statements are valid.
\begin{enumerate}
 \item \label{Item: Main 1}There exists a number $T_0 > 0$ and a mild solution $u$ to~\eqref{Eq: Navier-Stokes equations} on $[0 , T_0)$ that satisfies for all $p$ with $r \leq p < \min\{3 + \varepsilon , 4\}$
\begin{align*}
 t \mapsto t^{\frac{3}{2} (\frac{1}{r} - \frac{1}{p})} u(t) \in \B \C ([0 , T_0) ; \L^p_{\sigma} (\Omega)) \quad \text{and} \quad \| u(t) - a \|_{\L^r_{\sigma} (\Omega)} \to 0 \quad \text{as} \quad t \searrow 0.
\end{align*}
 Moreover, if $p > r$, then
\begin{align*}
 t^{\frac{3}{2} (\frac{1}{r} - \frac{1}{p})} \| u(t) \|_{\L^p_{\sigma} (\Omega)} \to 0 \quad \text{as} \quad t \searrow 0.
\end{align*}
 \item If $r > 3$, there exists a constant $C > 0$, depending only on $r$, $p$, and the constants in the estimates in Theorem~\ref{th-1.1}, such that
\begin{align*}
 T_0 \geq C \| a \|_{\L^r_{\sigma}(\Omega)}^{- \frac{2 r}{r - 3}}.
\end{align*}
 \item For all $3 \leq p < \min\{ 3 + \varepsilon , 4 \}$ there are positive constants $C_1 , C_2 > 0$, depending only on $p$ and the constants in the estimates in Theorem~\ref{th-1.1}, such that if $\| a \|_{\L^3_{\sigma} (\Omega)} < C_1$, the solution obtained in~\eqref{Item: Main 1} is global in time, {\em i.e.}, $T_0 = \infty$. Moreover, it satisfies the estimate
\begin{align*}
 \| u(t) \|_{\L^p_{\sigma} (\Omega)} \leq C_2 t^{- \frac{3}{2} (\frac{1}{3} - \frac{1}{p})} \qquad (0 < t < \infty).
\end{align*}
\end{enumerate}
\end{theo}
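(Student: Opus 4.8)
The plan is to realize the mild solution as a fixed point of the Duhamel map, following the Fujita--Kato and Giga--Miyakawa iteration scheme and feeding in the smoothing estimates \eqref{Eq: Lp-Lq estimates}--\eqref{Eq: Lp-Lq gradient estimates} of Theorem~\ref{th-1.1}. Writing the bilinear operator
\[
\CB(u,v)(t) := \int_0^t T(t-s)\BP\dv\bigl(u(s)\otimes v(s)\bigr)\ds,
\]
the equation becomes $u = T(\cdot)a - \CB(u,u)$. I would fix an auxiliary exponent $q$ with $\max\{r,3\} < q < \min\{3+\varepsilon,4\}$, put $\beta := \tfrac32(\tfrac1r-\tfrac1q)$, and work in the Kato-type space
\[
X_T := \Bigl\{\, u\in\C\bigl((0,T);\L^q_\sigma(\Omega)\bigr) : \|u\|_{X_T} := \sup_{0<t<T} t^{\beta}\|u(t)\|_{\L^q_\sigma(\Omega)} < \infty \,\Bigr\},
\]
together with its closed subspace $X_T^0$ of elements with $t^\beta\|u(t)\|_{\L^q_\sigma}\to 0$ as $t\searrow0$. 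By~\eqref{Eq: Lp-Lq estimates} the linear part obeys $\|T(\cdot)a\|_{X_T}\le C\|a\|_{\L^r_\sigma(\Omega)}$.

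The analytic core is the operator bound
\[
\|T(\tau)\BP\dv F\|_{\L^q_\sigma(\Omega)} \le C\,\tau^{-\frac12-\frac32(\frac1m-\frac1q)}\|F\|_{\L^m(\Omega)} \qquad (\tau>0),
\]
valid whenever $q\ge 2$, $m>\tfrac32$, and both $m,q$ satisfy~\eqref{cond-p}. I would prove it by duality: for $\phi\in\L^{q'}_\sigma(\Omega)$ one has $\langle T(\tau)\BP\dv F,\phi\rangle = -\langle F,\nabla T(\tau)^*\phi\rangle$ using $\BP^*=\BP$, $(\dv)^*=-\nabla$, and the fact that the adjoint semigroup $T(\tau)^*$ is the Stokes semigroup on the dual exponent, so it obeys~\eqref{Eq: Lp-Lq gradient estimates} as well (as~\eqref{cond-p} is invariant under $p\mapsto p'$); applying the gradient estimate with source $q'\le2$ and target $m'<3$ then gives the claim, the time exponent being unchanged since $\tfrac32(\tfrac1{q'}-\tfrac1{m'})=\tfrac32(\tfrac1m-\tfrac1q)$. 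Choosing $m=q/2$ and using $\|u\otimes v\|_{\L^{q/2}}\le\|u\|_{\L^q}\|v\|_{\L^q}$ together with $\|u(s)\|_{\L^q}\le s^{-\beta}\|u\|_{X_T}$ reduces $t^\beta\|\CB(u,v)(t)\|_{\L^q_\sigma}$ to a constant times $\|u\|_{X_T}\|v\|_{X_T}\,t^{\beta}\int_0^t (t-s)^{-\frac12-\frac{3}{2q}} s^{-2\beta}\ds$; this Beta-type integral converges because $q>3$ (so $\tfrac12+\tfrac{3}{2q}<1$) and $2\beta<1$, and a scaling computation evaluates it to a multiple of $t^{\delta-\beta}$ with $\delta:=\tfrac{r-3}{2r}$, whence
\[
\|\CB(u,v)\|_{X_T} \le C\,T^{\delta}\|u\|_{X_T}\|v\|_{X_T}, \qquad \delta=\tfrac{r-3}{2r}\ge 0,
\]
and $\CB$ maps $X_T^0\times X_T^0$ into $X_T^0$.

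With the bilinear bound in hand I would invoke the standard fixed-point lemma for quadratic equations (if $\|B(u,v)\|\le\eta\|u\|\|v\|$ on a Banach space and $4\eta\|y\|<1$, then $u=y-B(u,u)$ has a unique small solution). When $r>3$ one has $\delta>0$, so $4CT^{\delta}\cdot C\|a\|_{\L^r_\sigma}<1$ holds on $[0,T_0)$ with $T_0\simeq\|a\|_{\L^r_\sigma(\Omega)}^{-1/\delta}=\|a\|_{\L^r_\sigma(\Omega)}^{-2r/(r-3)}$, giving local existence and the quantitative lower bound of part (2). When $r=3$ one has $\delta=0$ and the bilinear constant is $T$-independent; for small $a$ the condition $4C^2\|a\|_{\L^3_\sigma}<1$ yields a global solution ($T_0=\infty$) with the asserted decay (run the same estimate with target exponent $p$), which is part (3). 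For arbitrary $a\in\L^3_\sigma$ I would instead use that $\|T(\cdot)a\|_{X_T}\to0$ as $T\searrow0$: splitting $a=a_1+a_2$ with $a_1\in\C^\infty_{c,\sigma}(\Omega)$ and $\|a_2\|_{\L^3_\sigma}$ small, the estimates $t^\beta\|T(t)a_1\|_{\L^q}\le Ct^\beta\|a_1\|_{\L^q}\to0$ and $t^\beta\|T(t)a_2\|_{\L^q}\le C\|a_2\|_{\L^3_\sigma}$ make $\limsup_{T\to0}\|T(\cdot)a\|_{X_T}$ as small as desired, so a local solution of small norm exists in $X_{T_0}^0$.

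Finally, the remaining claims of part (1) follow by bootstrapping the constructed solution $u$: re-running the bilinear estimate with $q$ replaced by an arbitrary $p\in[r,\min\{3+\varepsilon,4\})$ shows $t^{\frac32(\frac1r-\frac1p)}u\in\B\C([0,T_0);\L^p_\sigma(\Omega))$; strong continuity of the semigroup gives $T(t)a\to a$ in $\L^r_\sigma$, while $\|\CB(u,u)(t)\|_{\L^r_\sigma}\le C\,t^{\delta}\,(\sup_{0<s<t}s^\beta\|u(s)\|_{\L^q})^2\to0$ (using membership in $X^0$ when $r=3$) yields $\|u(t)-a\|_{\L^r_\sigma}\to0$, and the vanishing $t^{\frac32(\frac1r-\frac1p)}\|u(t)\|_{\L^p}\to0$ for $p>r$ comes from the same splitting. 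The hard part is the bilinear estimate: one must represent $T(\tau)\BP\dv$ as a bounded map $\L^{q/2}\to\L^q$ with the sharp weight $\tau^{-\frac12-\frac{3}{2q}}$, which forces the pair $(q/2,q)$ to lie simultaneously in the admissibility window~\eqref{cond-p}. Since the tensor $u\otimes v$ lives in $\L^{q/2}$, the lower exponent is dragged toward the critical value $3/2$ at the very edge of the available Stokes theory while $q$ is pushed toward $3+\varepsilon$; the restriction $r<\min\{3+\varepsilon,4\}$ is exactly what keeps both exponents admissible and the time integral convergent ($q>3$).
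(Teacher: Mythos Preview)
Your proposal is correct and matches the paper's approach. The paper's own proof of this theorem is only a reference: it points to Giga's iteration scheme \cite{Giga} adapted to the weaker semigroup estimates of Theorem~\ref{th-1.1}, with full details deferred to \cite[Sec.~6.3]{Tol-phd}; what you have written is precisely a self-contained execution of that scheme, including the key duality bound $\|T(\tau)\BP\dv F\|_{\L^q_\sigma}\le C\tau^{-1/2-3/(2q)}\|F\|_{\L^{q/2}}$ obtained from~\eqref{Eq: Lp-Lq gradient estimates} applied to the adjoint semigroup, and your identification of the constraint $3<q<\min\{3+\varepsilon,4\}$ as the simultaneous admissibility of $q$ and $q/2$ in~\eqref{cond-p} is exactly the point the paper alludes to.
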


Concerning the uniqueness in Theorem~\ref{Thm: Navier-Stokes} we refer to the weak-strong uniqueness result of Kozono~\cite[Thm.~2]{Kozono}, which is also valid in thee dimensional exterior Lipschitz domains. \par

The proofs of Theorems~\ref{th-1.1} and~\ref{Thm: Maximal regularity} rely on the investigation of the Stokes resolvent problem~\eqref{Eq: Stokes resolvent problem}. More precisely, standard semigroup theory implies that the bounded analyticity of the Stokes semigroup follows, once it is shown that for some $\theta \in (\pi / 2 , \pi)$ the sector $\Sigma_{\theta}$ is contained in the resolvent set $\rho(- A_p)$ of $- A_p$ and once the resolvent satisfies the bound
\begin{align}
\label{Eq: Resolvent estimate in introduction}
 \| \lambda (\lambda + A_p)^{-1} f \|_{\L^p_{\sigma} (\Omega)} \leq C \| f \|_{\L^p_{\sigma} (\Omega)} \qquad (\lambda \in \Sigma_{\theta} , f \in \L^p_{\sigma} (\Omega))
\end{align}
with a constant $C > 0$ independent of $u$, $f$, and $\lambda$. To achieve this inequality for large values of $\lambda$, we follow the approach of Geissert \textit{et al}.~\cite{GHHSK} and show that the solution to the resolvent problem in an exterior domain is essentially a perturbation of the sum of solutions to a problem on the whole space and to a problem on a bounded Lipschitz domain with appropriately chosen data. A crucial point in this approach is to prove decay with respect to the resolvent parameter $\lambda$ of the $\L^p$-norm of the pressure that appears in the resolvent problem on a \textit{bounded} Lipschitz domain. The main part of this paper deals, however, with the analysis of the Stokes resolvent estimate for small values of $\lambda$. Here, a refined analysis based on Fredholm theory is performed and resembles to some extend to Iwashita's proof~\cite{Iwashita} of $\L^p$-$\L^q$-estimates of the Stokes semigroup on smooth exterior domains. The main motivation to follow our strategy to prove the resolvent bounds~\eqref{Eq: Resolvent estimate in introduction} is to obtain a proof that works literally for the maximal regularity property of $A_p$ as well. \par
To prove the maximal regularity property of $A_p$ a randomized version of the resolvent estimate~\eqref{Eq: Resolvent estimate in introduction} is required. As this is in fact equivalent to boundedness properties in vector-valued Lebesgue spaces, see Remark~\ref{Rem: Square function estimates} below, existing proofs~\cite{Borchers_Sohr, Giga_Sohr, Borchers_Miyakawa, Farwig_Sohr} of resolvent estimates that rely on a contradiction argument do not work for the maximal regularity property, due to the lack of compact embeddings for the vector-valued Lebesgue spaces under consideration. The first proof of the maximal regularity property on the finite time interval $(0 , T)$ on smooth exterior domains was given by Solonnikov~\cite{Solonnikov} and the first proof on the infinite time interval $(0 , \infty)$ was given by Giga and Sohr in the two papers~\cite{Giga_Sohr, GS}. The latter relies on the property of bounded imaginary powers and the Dore--Venni theorem and differs completely from our approach. \par
The rest of this paper is organized as follows:
In Section~\ref{sec-2} we introduce some notation and important preliminary results. In Section~\ref{Sec: Properties the Stokes operators on the whole space and on bounded Lipschitz domains}, we discuss properties of solutions to the Stokes resolvent problem on the whole space and on bounded Lipschitz domains. In Section~\ref{sec-3} we are concerned with the decay estimate of the pressure on bounded Lipschitz domains with respect to the resolvent parameter $\lambda$. In Section~\ref{sec-4} we deal with the proofs of Theorems~\ref{th-1.1},~\ref{Thm: Maximal regularity}, and~\ref{Thm: Navier-Stokes}.

\section{Preliminaries}
\label{sec-2}
\noindent
In the whole article the space dimension $n \in \BN$ satisfies $n \geq 3$. Let $\Xi \subset \BR^n$ be an open set and $1 < p < \infty$. As was already described in the introduction, we denote by $\L^p_{\sigma} (\Xi)$ and by $\W^{1 , p}_{0 , \sigma} (\Xi)$ the closure of $\C_{c , \sigma}^{\infty} (\Xi)$ in the respective norms. For a Banach space $X$, we denote by $\L^p (\Xi ; X)$ the usual Bochner--Lebesgue space and by $\C ([0 , T) ; X)$ and $\B\C([0 , T) ; X)$ the spaces of continuous and bounded and continuous functions on the interval $[0 , T)$, respectively. If the integrability conditions of functions in Sobolev spaces hold only on compact subsets of $\Xi$, then the space will be attached with the subscript $\mathrm{loc}$. For $s > 0$ and $k \in \BN$, the standard Bessel potential spaces are denoted by $\H^{s , p} (\Xi ; \BC^k)$. The H\"older conjugate exponent of $p$ is denoted by $p^{\prime}$. By $C > 0$ we will often denote a generic constant that does not depend on the quantities at stake. \par
In the following, we introduce the notion of exterior Lipschitz domains that is considered in this paper.

\begin{defi}
\label{Def: Exterior domains}
An exterior Lipschitz domain $\Omega \subset \BR^n$ is the complement of a bounded Lipschitz domain $D \subset \BR^n$, \textit{i.e.}, $\Omega := \BR^n \setminus D$. \par
A bounded Lipschitz domain $D \subset \BR^n$ is a bounded, open, and connected set that satisfies the following condition.
For each $x_0\in\pd D$, there exists
a Lipschitz function $\zeta: \BR^{n - 1}\to \BR$, a coordinate system $(x',x_n)$, and a radius $r > 0$ such that
\begin{align*}
B_r (x_0) \cap D &= \{(x', x_n) \in \BR^n \mid x_n > \zeta (x') \} \cap B_r (x_0), \\
B_r (x_0) \cap \pd D &= \{(x', x_n) \in \BR^n \mid x_n = \zeta (x') \} \cap B_r (x_0),
\end{align*}
where $B_r (x_0)$ denotes the ball with radius $r$ centered at $x_0$ and $x^{\prime} := (x_1 , \dots , x_{n - 1})$.
\end{defi}

\begin{rema}
Notice that the definition of exterior Lipschitz domains stated above excludes the presence of holes inside the exterior domain. The exact reason for this technical assumption is pinpointed to the discussion of the Helmholtz projection on exterior Lipschitz domains in Section~\ref{Sec: The Helmholtz projection on exterior Lipschitz domains}. More precisely, this assumption comes from the fact that Lang and M\'endez resolved in~\cite[Thm.~5.8]{LM} only the Neumann problem on exterior Lipschitz domains with \textit{connected} boundary. However, it should be possible to add holes by adapting the methods of~\cite{Dorina_Mitrea}. Notice that the rest of this paper works perfectly also with holes appearing in the exterior domain.
\end{rema}

\subsection{A digression on the Helmholtz projection}
\label{Sec: The Helmholtz projection on exterior Lipschitz domains}

For a domain $\Xi \subset \BR^n$ the Helmholtz projection $\BP_{2 , \Xi}$ on $\L^2 (\Xi ; \BC^d)$ is the orthogonal projection onto $\L^2_{\sigma} (\Xi)$. It is well-known that the Helmholtz projection induces the orthogonal decomposition
\begin{align*}
 \L^2 (\Xi ; \BC^n) = \L^2_{\sigma} (\Xi) \oplus \mathrm{G}_2 (\Xi),
\end{align*}
where for $1 < p < \infty$
\begin{align*}
 \mathrm{G}_p (\Xi) := \{ \nabla g \in \L^p (\Xi ; \BC^n) \mid g \in \L^p_{\mathrm{loc}} (\Xi) \}.
\end{align*}
Thus, for $1 < p < \infty$, we say that the Helmholtz decomposition of $\L^p (\Xi ; \BC^n)$ exists, if an algebraic and topological decomposition of the form
\begin{align}
\label{Eq: Helmholtz general}
 \L^p (\Xi ; \BC^n) = \L^p_{\sigma} (\Xi) \oplus \mathrm{G}_p (\Xi)
\end{align}
exists. The Helmholtz projection $\BP_{p , \Xi}$ on $\L^p (\Xi ; \BC^n)$ is then defined as the projection of $\L^p (\Xi ; \BC^n)$ onto $\L^p_{\sigma} (\Xi)$. In the case $\Xi = \BR^n$, it is well-known that the Helmholtz decomposition exists for all $1 < p < \infty$, see, \textit{e.g.}, Galdi~\cite[Thm.~III.1.2]{G}. In this case, the projection can be written as
\begin{align}
\label{Eq: Helmholtz whole space}
 \BP_{p , \BR^n} := \CF^{-1} \Big[ 1 - \frac{\xi \otimes \xi}{\lvert \xi \rvert^2} \Big] \CF,
\end{align}
where $\CF$ denotes the Fourier transform and $\CF^{-1}$ its inverse. \par
If $\Xi = D$, where $D \subset \BR^n$ denotes a bounded Lipschitz domain, then it is shown by Fabes, M\'endez, and Mitrea~\cite{FMM}, that there exists $\varepsilon = \varepsilon (D) > 0$ such that the Helmholtz decomposition of $\L^p (D ; \BC^n)$ exists if
\begin{align}
\label{Eq: p condition for Helmholtz}
 \Big\lvert \frac{1}{p} - \frac{1}{2} \Big\rvert < \frac{1}{6} + \varepsilon.
\end{align}
It is also shown in~\cite[Thm.~12.2]{FMM} that~\eqref{Eq: p condition for Helmholtz} is sharp. \par
If $\Xi = \Omega$, where $\Omega \subset \BR^n$ is an exterior Lipschitz domain \textit{with connected boundary}, it was proven by Lang and M\'endez~\cite[Thm.~6.1]{LM}, that the Helmholtz decomposition of $\L^p (\Omega ; \BC^n)$ exists for all $p$ that satisfy
\begin{align*}
 \Big\lvert \frac{1}{p} - \frac{1}{2} \Big\rvert < \min\Big\{ \frac{1}{6}  + \varepsilon , \frac{1}{2} - \frac{1}{n} \Big\}. 
\end{align*}
This is a fatal fact as in three dimensions this condition exhibits only the interval $3 / 2 < p < 3$ while it is crucial for the existence theory of the Navier--Stokes equations in the critical space $\L^{\infty} (0 , \infty ; \L^3 (\Omega))$ to have information for $p$ in the interval $[3 , 3 + \varepsilon)$, \textit{cf.},~\cite{K, Giga_Miyakawa, Giga, Tol, Tol-phd} for the cases of the whole space and bounded smooth/Lipschitz domains. \par
In the following, we review the existence proof of Lang and M\'endez and point out a slight modification in order to recover the interval~\eqref{Eq: p condition for Helmholtz} for exterior Lipschitz domains $\Omega$ with connected boundary. (In fact, we review the proof presented in~\cite{FMM} since this proof was left out by Lang and M\'endez). Notice that
\begin{align}
\label{Eq: Characterization of solenoidal spaces by normal component}
 \L^p_{\sigma} (\Omega) = \{ u \in \L^p (\Omega ; \BC^n) \mid \dv(u) = 0 \text{ in } \Omega , \nu \cdot u = 0 \text{ on } \partial \Omega \},
\end{align}
where $\nu$ denotes the outward unit vector field to $\partial \Omega$, see~\cite[Thm.~III.2.3]{G}. Notice that for $u \in \L^p (\Omega ; \BC^n)$ with $\dv(u) \in \L^p (\Omega)$ the expression $\nu \cdot u$ is regarded as an element in the Besov space $\B^{- 1 / p}_{p , p} (\partial \Omega)$ and is defined by integration by parts
\begin{align}
\label{Eq: Definition of normal trace}
 \langle \nu \cdot u , \varphi \rangle_{\B^{- 1 / p}_{p , p} , \B^{1 / p}_{p^{\prime} , p^{\prime}}} = \int_{\Omega} \dv (u) E \varphi \dx + \int_{\Omega} u \cdot \nabla E \varphi \dx \qquad (\varphi \in \B^{1 / p}_{p^{\prime} , p^{\prime}} (\partial \Omega)). 
\end{align}
In this formula, $E$ denotes an extension operator that maps $\B^{1 / p}_{p^{\prime} , p^{\prime}} (\partial \Omega)$ boundedly into $\W^{1 , p} (\Omega)$. For a construction of $E$, see~\cite[Ch.~VII]{Jonsson_Wallin}. Notice that $\nu \cdot u$ is independent of the respective extension $E \varphi$, see~\cite[Lem.~5.5]{LM}. \par
To prove the existence of the Helmholtz decomposition, let $u \in \L^2 (\Omega ; \BC^n) \cap \L^p (\Omega ; \BC^n)$. Let $\Pi_{\Omega} (\dv(u))$ denote the Newton potential of $\dv(u)$ extended by zero to $\BR^n$. Choose a function $\psi$ that solves the Neumann problem
\begin{align}
\tag{Neu}
\label{Eq: Neumann problem}
 \left\{ \begin{aligned}
 \Delta \psi &= 0 && \text{in } \Omega \\
 \nu \cdot \nabla \psi &= h && \text{on } \partial \Omega \\
 \nabla \psi &\in \L^p (\Omega ; \BC^n),
 \end{aligned} \right.
\end{align}
with $h := \nu \cdot(u - \nabla \Pi_{\Omega} (\dv (u)))$. Then one argues that the equality
\begin{align*}
 \BP_{2 , \Omega} u = u - \nabla \Pi_{\Omega} (\dv (u)) - \nabla \psi
\end{align*}
holds and it is shown by Lang and M\'endez that for $3 / 2 < p < 3$ the right-hand side gives rise to $\L^p$-boundedness estimates. These estimates in turn follow from
\begin{align*}
 \| \nabla \psi \|_{\L^p (\Omega ; \BC^n)} \leq C \| h \|_{\B^{- 1 / p}_{p , p} (\partial \Omega)} \quad \text{and} \quad \| \nabla \Pi_{\Omega} (\dv (u)) \|_{\L^p (\Omega ; \BC^n)} \leq C \| u \|_{\L^p (\Omega ; \BC^n)}.
\end{align*}
While the estimate on the left-hand side is valid for all $p$ that satisfy~\eqref{Eq: p condition for Helmholtz} by~\cite[Thm.~5.8]{LM}, the estimate on the right-hand side is valid for all $p$ that satisfy $n / (n - 1) < p < n$ by~\cite[Cor.~3.3]{LM}. \par
To get rid of the condition $n / (n - 1) < p < n$, we replace in the definition of $h$ the term $\nabla \Pi_{\Omega} \dv(u)$ by $\CF^{-1} \xi \otimes \xi \lvert \xi \rvert^{- 2} \CF U$, where $U$ denotes the extension of $u$ to $\BR^n$ by zero. Thus, by virtue of~\eqref{Eq: Helmholtz whole space}, let $h$ be given by $h := \nu \cdot (\BP_{p , \BR^n} U)|_{\partial \Omega}$. Since the divergence of $\BP_{p , \BR^n} U$ is clearly an $\L^p$-function, $h$ is well-defined as an element in $\B^{- 1 / p}_{p , p} (\partial \Omega)$ and its norm is estimated by virtue of~\eqref{Eq: Definition of normal trace} as
\begin{align*}
 \| h \|_{\B^{- 1 / p}_{p , p} (\partial \Omega)} \leq \sup_{\varphi} \Big\lvert \int_{\Omega} \BP_{p , \BR^n} U \cdot \nabla E \varphi \dx \Big\rvert \leq C \| u \|_{\L^p (\Omega ; \BC^n)}.
\end{align*}
Here the supremum is taken over all functions $\varphi \in \B^{1 / p}_{p^{\prime} , p^{\prime}} (\partial \Omega)$ with norm less or equal to one and $C > 0$ is the product of the boundedness constants of the Helmholtz projection $\BP_{p , \BR^n}$ and of the extension operator $E : \B^{1 / p}_{p^{\prime} , p^{\prime}} (\partial \Omega) \to \W^{1 , p^{\prime}} (\Omega)$. \par
By virtue of~\cite[Thm.~5.8]{LM} there exists a unique function $\psi$ that satisfies~\eqref{Eq: Neumann problem} whenever $p$ satisfies~\eqref{Eq: p condition for Helmholtz}. Now, we show the equality
\begin{align}
\label{Eq: Represenation of Helmholtz by Neumann}
 \BP_{2 , \Omega} u = (\BP_{p , \BR^n} U)|_{\Omega} - \nabla \psi.
\end{align}
Since by assumption $u$ lies in $\L^2 (\Omega ; \BC^n) \cap \L^p (\Omega ; \BC^n)$ it holds $\BP_{p , \BR^n} U = \BP_{2 , \BR^n} U$. Furthermore, if $\nabla g \in \mathrm{G}_2 (\BR^n)$ with $U = \BP_{2 , \BR^n} U + \nabla g$, then
\begin{align*}
 u = (\BP_{p , \BR^n} U)|_{\Omega} + \nabla g|_{\Omega} \qquad \Leftrightarrow \qquad u = (\BP_{p , \BR^n} U)|_{\Omega} - \nabla \psi + \nabla (g|_{\Omega} + \psi).
\end{align*}
Notice that $\nabla (g|_{\Omega} + \psi) \in \mathrm{G}_2 (\Omega)$ and that $(\BP_{p , \BR^n} U)|_{\Omega} - \nabla \psi$ is divergence free. Moreover, its normal trace vanishes by the construction of $\psi$. By virtue of~\eqref{Eq: Characterization of solenoidal spaces by normal component} it follows that $(\BP_{p , \BR^n} U)|_{\Omega} - \nabla \psi \in \L^2_{\sigma} (\Omega)$ and by the uniqueness of the Helmholtz decomposition, it follows the equality in~\eqref{Eq: Represenation of Helmholtz by Neumann}. \par
This proves already the existence of the Helmholtz decomposition on $\L^p (\Omega ; \BC^n)$ for all $p$ subject to~\eqref{Eq: p condition for Helmholtz}, but only if $\partial \Omega$ is \textit{connected}. \par
To use this result to obtain the Helmholtz decomposition for exterior domains subject to Definition~\ref{Def: Exterior domains} proceed as follows. Decompose $\Omega$ into its connected components
\begin{align*}
 \Omega = \Omega_0 \cup \bigcup_{k = 1}^N \Omega_k,
\end{align*}
where $\Omega_0$ is the unbounded connected component and $\Omega_k$ $(k = 1 , \dots , N)$ are bounded Lipschitz domains. Thus, by~\cite[Thm.~11.1]{FMM}, there exists $\varepsilon > 0$ such that the Helmholtz projection $\BP_{p , \Omega_k}$ is bounded on $\L^p (\Omega_k ; \BC^n)$ for all $p$ subject to~\eqref{Eq: p condition for Helmholtz} and $k = 1 , \dots , N$. To define the Helmholtz projection on $\Omega$, define
\begin{align*}
 [\BP_{p , \Omega} f] (x) := [\BP_{p , \Omega_k} R_k f] (x) \qquad (x \in \Omega_k , k = 0 , \dots , N , f \in \L^p (\Omega ; \BC^n)),
\end{align*}
where $R_k$ denotes the restriction operator of functions on $\Omega$ to $\Omega_k$. \par
The discussion of this section leads to the following proposition on the existence of the Helmholtz decomposition on exterior Lipschitz domains.

\begin{prop}
\label{Prop: Helmholtz on exterior}
Let $\Omega \subset \BR^n$ be an exterior Lipschitz domain and $p$ be subject to~\eqref{Eq: p condition for Helmholtz}. Then the Helmholtz decomposition~\eqref{Eq: Helmholtz general} exists.
\end{prop}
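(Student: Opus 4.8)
The plan is to assemble the decomposition on $\Omega$ from decompositions on each of its connected components, drawing on the per-domain results already recorded in this section. Following Definition~\ref{Def: Exterior domains}, I would write $\Omega$ as the disjoint union of its connected components $\Omega = \Omega_0 \cup \bigcup_{k = 1}^N \Omega_k$, where $\Omega_0$ is the unique unbounded component and $\Omega_1 , \dots , \Omega_N$ are bounded Lipschitz domains (the bounded cavities of $D$). Since $D$ is connected, the compact set $\BR^n \setminus \Omega_0$ (that is, $\overline{D}$ with its cavities filled in) is connected and has connected complement $\Omega_0$, so that $\partial \Omega_0$ is connected; thus $\Omega_0$ falls into the scope of the modified Lang--M\'endez argument reviewed above. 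Because the $\Omega_k$ are pairwise disjoint open sets whose union exhausts $\Omega$ up to a null set, the restriction operators $R_k$ furnish an $\ell^p$-type identification $\L^p(\Omega ; \BC^n) \cong \bigoplus_k \L^p(\Omega_k ; \BC^n)$, that is $\| f \|_{\L^p(\Omega)}^p = \sum_k \| R_k f \|_{\L^p(\Omega_k)}^p$.

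First I would establish the Helmholtz decomposition on each component under hypothesis~\eqref{Eq: p condition for Helmholtz}. For the bounded Lipschitz domains $\Omega_k$ $(k \geq 1)$ this is the theorem of Fabes, M\'endez, and Mitrea~\cite[Thm.~11.1]{FMM}, giving bounded projections $\BP_{p , \Omega_k}$ onto $\L^p_{\sigma}(\Omega_k)$ along $\mathrm{G}_p(\Omega_k)$. For the unbounded component $\Omega_0$, which has connected boundary, the construction carried out in the digression above --- replacing $\nabla \Pi_{\Omega} (\dv u)$ by the whole-space projection of the zero extension and solving the Neumann problem~\eqref{Eq: Neumann problem} --- yields, for $u \in \L^2(\Omega_0 ; \BC^n) \cap \L^p(\Omega_0 ; \BC^n)$, a representation together with the a priori bound that make $\BP_{p , \Omega_0}$ a bounded projection on this dense subspace, hence on all of $\L^p(\Omega_0 ; \BC^n)$. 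I would then set $\BP_{p , \Omega} := \bigoplus_k \BP_{p , \Omega_k}$ through $[\BP_{p , \Omega} f](x) = [\BP_{p , \Omega_k} R_k f](x)$ for $x \in \Omega_k$; by the $\ell^p$-identification of the previous step this is a bounded projection on $\L^p(\Omega ; \BC^n)$ with norm at most $\max_k \| \BP_{p , \Omega_k} \|$.

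It then remains to identify $\mathrm{ran}\, \BP_{p , \Omega} = \L^p_{\sigma}(\Omega)$ and $\ker \BP_{p , \Omega} = \mathrm{G}_p(\Omega)$, which yields the algebraic and topological splitting~\eqref{Eq: Helmholtz general}, since every bounded projection decomposes its space into the direct sum of its range and kernel. For the range I would invoke the normal-trace characterization~\eqref{Eq: Characterization of solenoidal spaces by normal component}, valid on $\Omega$ and on each $\Omega_k$ by~\cite[Thm.~III.2.3]{G}: a field $u$ lies in $\L^p_{\sigma}(\Omega)$ if and only if $\dv(u) = 0$ in $\Omega$ and $\nu \cdot u = 0$ on $\partial \Omega$; since $\partial \Omega = \bigcup_k \partial \Omega_k$ and both conditions are local, this holds exactly when $R_k u \in \L^p_{\sigma}(\Omega_k)$ for every $k$, i.e. when $u \in \bigoplus_k \L^p_{\sigma}(\Omega_k) = \mathrm{ran}\, \BP_{p , \Omega}$. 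For the kernel, any field in $\mathrm{G}_p(\Omega)$ restricts componentwise to $\mathrm{G}_p(\Omega_k)$, and conversely a family $\nabla g_k \in \mathrm{G}_p(\Omega_k)$ glues to $\nabla g$ with $g := g_k$ on $\Omega_k$, which lies in $\L^p_{\loc}(\Omega)$ precisely because the components are separated; hence $\ker \BP_{p , \Omega} = \bigoplus_k \mathrm{G}_p(\Omega_k) = \mathrm{G}_p(\Omega)$. The main obstacle I anticipate is exactly this compatibility of the solenoidal space with the component decomposition: it hinges on the characterization~\eqref{Eq: Characterization of solenoidal spaces by normal component} holding simultaneously on $\Omega$ and on each $\Omega_k$, together with the topological claim that the unbounded component has connected boundary so that the digression's argument applies there.
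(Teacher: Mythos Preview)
Your proposal is correct and follows essentially the same approach as the paper: decompose $\Omega$ into its connected components, apply the modified Lang--M\'endez argument to the unbounded component $\Omega_0$ and the Fabes--M\'endez--Mitrea result~\cite[Thm.~11.1]{FMM} to the bounded components, and define $\BP_{p,\Omega}$ componentwise. You add two pieces of detail the paper leaves implicit --- the topological reason why $\partial\Omega_0$ is connected, and the verification that $\mathrm{ran}\,\BP_{p,\Omega}=\L^p_\sigma(\Omega)$ and $\ker\BP_{p,\Omega}=\mathrm{G}_p(\Omega)$ via~\eqref{Eq: Characterization of solenoidal spaces by normal component} --- but the strategy is identical.
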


\subsection{Maximal regularity}

Recall the definition of maximal regularity below~\eqref{Eq: Instationary system} and recall that given $\theta \in (0 , \pi)$ the sector $\Sigma_{\theta}$ is given by $\Sigma_{\theta} := \{ z \in \BC \setminus \{ 0 \} \mid \lvert \arg (z) \rvert < \theta \}$. Clearly, the definition of maximal regularity can be generalized to a closed operator $\CA : \CD(\CA) \subset X \to X$ on a Banach space $X$ such that $- \CA$ generates a bounded analytic semigroup on $X$. In this case, there exists an angle $\theta \in (\pi / 2 , \pi)$ such that $\Sigma_{\theta} \subset \rho(- \CA)$. The following characterization is due to Weis~\cite[Thm.~4.2]{Weis}.

\begin{prop}
\label{Prop: Weis}
Let $X$ be a space of type $\mathrm{UMD}$ and let $- \CA$ be the generator of a bounded analytic semigroup on $X$. Then $\CA$ has maximal regularity if and only if there exists $\theta \in (\pi / 2 , \pi)$ such that $\{ \lambda (\lambda + \CA)^{-1} \}_{\lambda \in \Sigma_{\theta}}$ is $\CR$-bounded in $\CL(X)$.
\end{prop}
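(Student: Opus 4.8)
The plan is to translate the problem into the language of operator-valued Fourier multipliers and to invoke an operator-valued Mikhlin-type multiplier theorem, which is precisely where the $\mathrm{UMD}$ hypothesis enters. First I would recall the standard reduction: since $-\CA$ generates a bounded analytic semigroup, maximal regularity on $(0 , \infty)$ is equivalent to maximal regularity on the line $\BR$, and the latter is equivalent to the $\L^s (\BR ; X)$-boundedness, for one (hence every) $s \in (1 , \infty)$, of the operator $T$ that sends $f$ to $\CA u$, where $u$ solves $\partial_t u + \CA u = f$. Taking the Fourier transform in time gives $\hat u (\xi) = (i \xi + \CA)^{-1} \hat f (\xi)$, so that $T$ is exactly the Fourier multiplier operator with operator-valued symbol $M(\xi) := \CA (i \xi + \CA)^{-1} = \I - i \xi (i \xi + \CA)^{-1}$ for $\xi \in \BR \setminus \{ 0 \}$. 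Both implications of the proposition then become statements about this multiplier.

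For the implication from $\CR$-boundedness to maximal regularity I would apply the operator-valued Mikhlin theorem valid on $\mathrm{UMD}$ spaces: $T$ is bounded on $\L^s (\BR ; X)$ provided the families $\{ M(\xi) \}_{\xi \neq 0}$ and $\{ \xi M'(\xi) \}_{\xi \neq 0}$ are $\CR$-bounded in $\CL(X)$. Both follow from the hypothesis. Since $\theta > \pi / 2$, the punctured imaginary axis $\{ i \xi : \xi \in \BR \setminus \{ 0 \} \}$ lies inside $\Sigma_{\theta}$, whence $\{ i \xi (i \xi + \CA)^{-1} \}_{\xi \neq 0} \subset \{ \lambda (\lambda + \CA)^{-1} \}_{\lambda \in \Sigma_{\theta}}$ is $\CR$-bounded; since passing to subsets and adding a fixed multiple of $\I$ preserves $\CR$-boundedness, $\{ M(\xi) \}_{\xi \neq 0}$ is $\CR$-bounded as well. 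For the derivative, a direct computation gives $\xi M'(\xi) = - i \xi \CA (i \xi + \CA)^{-2}$, which factorizes as $[\CA (i \xi + \CA)^{-1}] \, [- i \xi (i \xi + \CA)^{-1}]$; each factor ranges in an $\CR$-bounded family, and the product of two $\CR$-bounded families is again $\CR$-bounded. This yields the $\L^s$-boundedness of $T$ and hence maximal regularity.

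For the converse implication, maximal regularity forces $T$ to be $\L^s (\BR ; X)$-bounded, and I would extract $\CR$-boundedness from the \emph{necessity} half of the multiplier theory, namely that the range of symbol values of any bounded Banach-space Fourier multiplier is automatically $\CR$-bounded. Applied to $M(\xi) = \I - i \xi (i \xi + \CA)^{-1}$, this gives the $\CR$-boundedness of $\{ i \xi (i \xi + \CA)^{-1} \}_{\xi \neq 0}$, i.e. of the resolvent along the imaginary axis. A Neumann-series argument based on the analyticity of $\lambda \mapsto (\lambda + \CA)^{-1}$ and the stability of $\CR$-bounds then propagates this from the imaginary axis to a full sector $\Sigma_{\theta}$ with $\theta > \pi / 2$, completing the equivalence.

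The main obstacle, and the genuine content of the statement, is the operator-valued Mikhlin multiplier theorem on $\mathrm{UMD}$ spaces used in the sufficiency direction. Its proof rests on a randomized Littlewood--Paley decomposition in which the scalar square-function estimates are replaced by Rademacher averages, on the boundedness of the Riesz projections encoded by the $\mathrm{UMD}$ property, and on the gain obtained from $\CR$-boundedness rather than mere uniform boundedness of the symbol; it is exactly the interplay of these three ingredients that singles out $\CR$-boundedness as the correct hypothesis and makes the $\mathrm{UMD}$ assumption indispensable. By contrast, the reduction to a Fourier multiplier, the algebraic closure properties of $\CR$-bounded families, and the propagation to a sector are comparatively routine, so I would present the deep multiplier theorem as a black box (citing Weis) and devote the remaining work to the symbol computations above.
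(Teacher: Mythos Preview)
Your outline is essentially the standard proof of Weis's characterization and is correct in its overall structure: the reduction of maximal regularity to the $\L^s(\BR;X)$-boundedness of the Fourier multiplier with symbol $M(\xi)=\CA(i\xi+\CA)^{-1}$, the verification of the Mikhlin hypotheses on $\{M(\xi)\}$ and $\{\xi M'(\xi)\}$ from the assumed $\CR$-bound on the resolvent, and the converse via the necessary $\CR$-boundedness of the symbol values of a bounded operator-valued multiplier followed by a sector-propagation argument. These are precisely the ingredients in Weis's original paper.

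However, the paper does not prove this proposition at all: it is quoted verbatim as a known result, with the attribution ``The following characterization is due to Weis~[Thm.~4.2]'' and no further argument. So there is nothing in the paper to compare your proof against; the proposition functions there purely as a black box that is invoked later (in Step~5 of Section~5) to deduce Theorem~1.2 from the $\CR$-resolvent bounds established for the Stokes operator. Your sketch is therefore not an alternative to the paper's proof but rather a reconstruction of the cited external result.
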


It is well-known that $\L^p$-spaces for $1 < p < \infty$ are of type $\mathrm{UMD}$. Moreover, all closed subspaces of $\mathrm{UMD}$-spaces are of type $\mathrm{UMD}$. As a consequence, $\L^p_{\sigma} (\Xi)$ is a $\mathrm{UMD}$-space for all measurable sets $\Xi \subset \BR^n$. The definition of $\CR$-bounded families of operators reads as follows.

\begin{defi}
Let $X$ and $Y$ be Banach spaces.
A family of operators $\CT \subset \CL (X, Y)$ is said to be \textit{$\CR$-bounded} if there exists a positive constant $C > 0$ such that for any $N\in \BN$, $T_j \in \CT$, $x_j \in X$ $(j = 1, \dots, N)$ the inequality
\begin{align}
\label{2.1}
\Big\| \sum_{j = 1}^{N} r_j (\cdot) T_j x_j \Big\|_{\L^2 (0 , 1 ; Y)}
\leq C \Big\| \sum_{j = 1}^{N} r_j (\cdot) x_j \Big\|_{\L^2 (0 , 1 ; X)}
\end{align}
holds. Here, $r_j (t) := \mathrm{sgn} (\sin(2^j \pi t))$ are the \textit{Rademacher-functions}. The infimum over all constants $C > 0$ such that~\eqref{2.1} holds is said to be the $\CR$-bound of $\CT$ and will be denoted by $\CR_{X \to Y} \{\CT\}$.
If $X = Y$ we simply write $\CR_{X} \{\CT\}$.
\end{defi}

\begin{rema}
\label{Rem: R-boundedness implies uniform boundedness}
Notice that $\CR$-boundedness of a family of operators implies its uniform boundedness. If $X$ and $Y$ are Hilbert spaces, then $\CR$-boundedness is equivalent to uniform boundedness~\cite[Rem.~3.2]{Denk_Hieber_Pruess}.
\end{rema}

\begin{rema}
\label{Rem: Square function estimates}
Let $1 < p , q < \infty$, $k , m \in \BN$, and $\Xi \subset \BR^n$ be measurable. It is well-known, see~\cite[Rem.~3.2]{Denk_Hieber_Pruess}, that there exists a constant $C > 0$ such that for all $N \in \BN$ and $f_j \in \L^p (\Xi ; \BC^k)$ it holds
\begin{align}
\label{Eq: Equivalence R-sum and square function term}
 C^{-1} \Big\| \sum_{j = 1}^N r_j (\cdot) f_j \Big\|_{\L^2 (0 , 1 ; \L^p (\Xi ; \BC^k))} \leq \Big\| \Big[ \sum_{j = 1}^N \lvert f_j \rvert^2 \Big]^{1 / 2} \Big\|_{\L^p (\Xi)} \leq C \Big\| \sum_{j = 1}^N r_j (\cdot) f_j \Big\|_{\L^2 (0 , 1 ; \L^p (\Xi ; \BC^k))}.
\end{align}
Thus, $\CR$-boundedness of an operator family $\CT \subset \CL (\L^p (\Xi ; \BC^k) , \L^q (\Xi ; \BC^m))$ is equivalent to the validity of square function estimates of the following form. Namely, there exists a constant $C > 0$ such that for all $N \in \BN$, $T_j \in \CT$, and $f_j \in \L^p (\Xi ; \BC^k)$ it holds
\begin{align*}
 \Big\| \Big[ \sum_{j = 1}^N \lvert T_j f_j \rvert^2 \Big]^{1 / 2} \Big\|_{\L^q (\Xi)} \leq C \Big\| \Big[ \sum_{j = 1}^N \lvert f_j \rvert^2 \Big]^{1 / 2} \Big\|_{\L^p (\Xi)}.
\end{align*}
Notice further that this is equivalent to boundedness of the family
\begin{align*}
 \CS := \{ (T_1 , \dots , T_N , 0 , \dots) \mid N \in \BN , T_j \in \CT \text{ for } 1 \leq j \leq N \} \subset \CL(\L^p (\Xi ; \ell^2(\BC^k)) , \L^q (\Xi ; \ell^2(\BC^m))).
\end{align*}
Here, $\ell^2(\BC^l)$ denotes the space of square summable sequences that take values in $\BC^l$ for $l \in \BN$. Moreover, $(T_1 , \dots , T_N , 0 , \dots)$ acts componentwise on an element $f = (f_j)_{j \in \BN} \in \L^p (\Xi ; \ell^2(\BC^k))$.
\end{rema}

For further reference, we record the contraction principle of Kahane, see, \textit{e.g.},~\cite[Lem.~3.5]{Denk_Hieber_Pruess}.

\begin{prop}
\label{Prop: Kahane}
Let $X$ be a Banach space, $N \in \BN$, $x_j \in X$, and $\alpha_j , \beta_j \in \BC$ such that $\lvert \alpha_j \rvert \leq \lvert \beta_j \rvert$ $(j = 1 , \dots , N)$. Then
\begin{align*}
 \Big\| \sum_{j = 1}^N r_j (\cdot) \alpha_j x_j \Big\|_{\L^2 (0 , 1 ; X)} \leq 2 \Big\| \sum_{j = 1}^N r_j (\cdot) \beta_j x_j \Big\|_{\L^2 (0 , 1 ; X)}.
\end{align*}
\end{prop}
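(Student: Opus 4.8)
The plan is to reduce the complex-scalar statement to the classical real-scalar contraction principle and to absorb the factor $2$ when passing from real to complex coefficients. The only genuinely substantive ingredient is the real case, which I isolate first.

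\emph{Real case.} For real numbers $a_j$ with $\lvert a_j \rvert \leq 1$ I claim
\[
 \Big\| \sum_{j = 1}^N r_j (\cdot) a_j x_j \Big\|_{\L^2 (0 , 1 ; X)} \leq \Big\| \sum_{j = 1}^N r_j (\cdot) x_j \Big\|_{\L^2 (0 , 1 ; X)}.
\]
To prove this I would consider the function $F : [-1 , 1]^N \to [0 , \infty)$ given by $F(a) := \| \sum_{j} r_j (\cdot) a_j x_j \|_{\L^2 (0 , 1 ; X)}$. Being the composition of the linear map $a \mapsto \sum_j r_j a_j x_j$ with a norm, $F$ is convex; hence it attains its maximum over the cube $[-1 , 1]^N$ at an extreme point, that is, at a vertex with all $a_j \in \{ -1 , +1 \}$. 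Since the joint distribution of the Rademacher system is invariant under sign changes (replacing each $r_j$ by $\epsilon_j r_j$ with $\epsilon_j \in \{ -1 , +1 \}$ is a measure-preserving transformation of $[0 , 1]$), the $\L^2 (0 , 1 ; X)$-norm of the sum only depends on this distribution, so $F$ takes the same value at every vertex. Therefore $F(a) \leq F(1 , \dots , 1) = \| \sum_j r_j x_j \|_{\L^2 (0 , 1 ; X)}$ for all $a$ in the cube, which is the claimed inequality.

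\emph{Complex case.} Next I would pass to complex coefficients by real/imaginary splitting. The indices with $\beta_j = 0$ force $\alpha_j = 0$ and may be discarded, so set $\gamma_j := \alpha_j / \beta_j$ and $y_j := \beta_j x_j$; then $\lvert \gamma_j \rvert \leq 1$ and $\sum_j r_j \alpha_j x_j = \sum_j r_j \gamma_j y_j$. Writing $\gamma_j = \Re \gamma_j + i \, \mathrm{Im}\, \gamma_j$ and using the triangle inequality in $\L^2 (0 , 1 ; X)$ gives
\[
 \Big\| \sum_j r_j \gamma_j y_j \Big\|_{\L^2 (0 , 1 ; X)} \leq \Big\| \sum_j r_j (\Re \gamma_j) y_j \Big\|_{\L^2 (0 , 1 ; X)} + \Big\| \sum_j r_j (\mathrm{Im}\, \gamma_j) y_j \Big\|_{\L^2 (0 , 1 ; X)}.
\]
Since $\lvert \Re \gamma_j \rvert \leq 1$ and $\lvert \mathrm{Im}\, \gamma_j \rvert \leq 1$, the real-scalar inequality applied to each of the two terms bounds the right-hand side by $2 \, \| \sum_j r_j y_j \|_{\L^2 (0 , 1 ; X)} = 2 \, \| \sum_j r_j \beta_j x_j \|_{\L^2 (0 , 1 ; X)}$, which is the assertion.

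The crux of the argument is the real-scalar case, where the reduction to the vertices of the cube via convexity, combined with the sign-symmetry of the Rademacher functions, does all the work; the complex reduction is routine, and the factor $2$ is simply the unavoidable cost of the real/imaginary decomposition.
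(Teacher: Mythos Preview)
Your argument is correct. The paper does not actually prove this proposition; it only records it with a citation to \cite[Lem.~3.5]{Denk_Hieber_Pruess}. What you have written is precisely the standard proof: the real-scalar contraction principle via convexity of the norm on the cube $[-1,1]^N$ together with the sign-symmetry of the Rademacher system, followed by a real/imaginary decomposition that produces the factor~$2$ in the complex case. This is essentially the argument one finds in the cited reference, so there is nothing to compare.
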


\section{Properties of the Stokes operators on the whole space and on bounded Lipschitz domains}
\label{Sec: Properties the Stokes operators on the whole space and on bounded Lipschitz domains}

\noindent In this section we are going to present important properties of the Stokes resolvent problems on the whole space and on bounded Lipschitz domains.

\subsection{The Stokes operator on the whole space}

The Stokes operator on $\L^p_\sigma (\BR^n)$ is defined as
\begin{align*}
A_{p, \BR^n} u := - \BP_{p , \BR^n} \Delta_{p , \BR^n} u \quad \text{for } u \in \CD(A_{p , \BR^n}) := \W^{2 , p} (\BR^n ; \BC^n) \cap \L^p_{\sigma} (\BR^n).
\end{align*}
By the very definition it is clear that $\BP_{p , \BR^n}$ and $\Delta_{p , \BR^n}$ commute so that $A_{p , \BR^n} u = - \Delta_{p , \BR^n} u$ is valid for $u \in \CD(A_{p , \BR^n})$. Let $\theta \in (0 , \pi)$. Then the resolvent problem for the Stokes operator with general right-hand side $f \in \L^p (\BR^n ; \BC^n)$
\begin{align}
\label{Eq: Resolvent problem on the whole space}
\left\{ \begin{aligned}
 \lambda u - \Delta u + \nabla \pi &= f && \text{in } \BR^n \\
 \dv(u) &= 0 && \text{in } \BR^n
\end{aligned} \right.
\end{align}
can be solved as follows: First, decompose $f$ by means of~\eqref{Eq: Helmholtz general} as $f = \BP_{p , \BR^n} f + \nabla g$. Then the solutions to the resolvent problem are given by $u := (\lambda - \Delta_{p , \BR^n})^{-1} \BP_{p , \BR^n} f$ and $\pi := g$. By virtue of~\cite[Sec.~3]{Shibata_Shimizu} we have the following proposition.

\begin{prop}\label{prop-2.5}
Let $1 < p < \infty$ and $\theta \in (0 , \pi)$. Then for all $f \in \L^p (\BR^n ; \BC^n)$ and all $\lambda \in \Sigma_{\theta}$ the resolvent problem~\eqref{Eq: Resolvent problem on the whole space} has a unique solution $(u , \pi)$ in $\CD(A_{p , \BR^n}) \times \mathrm{G}_p (\BR^n)$ (with $\pi$ being unique up to an additional constant). The function $u$ is given by $u = (\lambda - \Delta_{p , \BR^n})^{-1} \BP_{p , \BR^n} f$ and $\pi$ is given by $\pi = g$, where $g$ satisfies $f = \BP_{p , \BR^n} f + \nabla g$. Moreover, there exists a constant $C > 0$ such that
\begin{align*}
 \CR_{\L^p (\BR^n ; \BC^n) \to \L^p_{\sigma} (\BR^n)} \big\{ \lambda (\lambda + A_{p , \BR^n})^{-1} \BP_{p , \BR^n} \mid \lambda \in \Sigma_{\theta}\big\} &\leq C \\
 \CR_{\L^p (\BR^n ; \BC^n) \to \L^p (\BR^n ; \BC^{n^2})} \big\{ \lvert \lambda \rvert^{1 / 2} \nabla (\lambda + A_{p , \BR^n})^{-1} \BP_{p , \BR^n} \mid \lambda \in \Sigma_{\theta}\big\} &\leq C \\
 \CR_{\L^p (\BR^n ; \BC^n) \to \L^p (\BR^n ; \BC^{n^3})} \big\{ \nabla^2 (\lambda + A_{p , \BR^n})^{-1} \BP_{p , \BR^n} \mid \lambda \in \Sigma_{\theta}\big\} &\leq C.
\end{align*}
\end{prop}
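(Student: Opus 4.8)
The plan is to realize each of the three operator families as a family of Fourier multipliers and to reduce the $\CR$-bounds to pointwise estimates on their symbols. The existence and uniqueness statement together with the solution formula is immediate from the Helmholtz decomposition~\eqref{Eq: Helmholtz general} on $\BR^n$: writing $f = \BP_{p , \BR^n} f + \nabla g$, the pair $u := (\lambda - \Delta_{p , \BR^n})^{-1} \BP_{p , \BR^n} f$ and $\pi := g$ solves~\eqref{Eq: Resolvent problem on the whole space}, while uniqueness follows by applying $\BP_{p , \BR^n}$ to the homogeneous problem and using $\BP_{p , \BR^n} \nabla = 0$ together with $\lambda \in \rho(\Delta_{p , \BR^n})$. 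Since $\BP_{p , \BR^n}$ and $\Delta_{p , \BR^n}$ commute, $(\lambda + A_{p , \BR^n})^{-1} \BP_{p , \BR^n}$ equals $(\lambda - \Delta_{p , \BR^n})^{-1} \BP_{p , \BR^n}$, which by~\eqref{Eq: Helmholtz whole space} is the Fourier multiplier with matrix-valued symbol
\begin{align*}
 M_{\lambda}(\xi) := \frac{1}{\lambda + \lvert \xi \rvert^2} \Big( \I - \frac{\xi \otimes \xi}{\lvert \xi \rvert^2} \Big) \qquad (\xi \in \BR^n \setminus \{ 0 \}).
\end{align*}
Differentiation in $x$ turns into multiplication by $\mathrm{i} \xi$ on the Fourier side, so the three families are the multipliers with symbols $\lambda M_{\lambda}(\xi)$, $\lvert \lambda \rvert^{1 / 2} (\mathrm{i} \xi) M_{\lambda}(\xi)$, and $- ( \xi \otimes \xi ) M_{\lambda}(\xi)$, respectively, the latter two being understood as the appropriate tensors with values in $\CL(\BC^n , \BC^{n^2})$ and $\CL(\BC^n , \BC^{n^3})$.

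To obtain the $\CR$-bounds I would invoke the operator-valued Marcinkiewicz multiplier theorem on the $\mathrm{UMD}$ space $\L^p (\BR^n ; \BC^m)$ (see, \textit{e.g.},~\cite{Denk_Hieber_Pruess}), by which a family $\{ T_{m_{\lambda}} \}_{\lambda \in \Sigma_{\theta}}$ of Fourier multiplier operators is $\CR$-bounded provided the set $\{ \xi^{\alpha} \pd_{\xi}^{\alpha} m_{\lambda}(\xi) \mid \xi \neq 0 , \lambda \in \Sigma_{\theta} \}$ is $\CR$-bounded in $\CL(\BC^n , \BC^m)$ for every $\alpha \in \{ 0 , 1 \}^n$. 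As the symbols here take values in the finite-dimensional matrix algebra, Remark~\ref{Rem: R-boundedness implies uniform boundedness} shows that $\CR$-boundedness of such a set reduces to its uniform boundedness. Hence it suffices to prove, uniformly in $\lambda \in \Sigma_{\theta}$ and $\xi \neq 0$, the pointwise Mikhlin bounds
\begin{align*}
 \lvert \xi \rvert^{\lvert \alpha \rvert} \lvert \pd_{\xi}^{\alpha} ( \lambda M_{\lambda} )(\xi) \rvert + \lvert \xi \rvert^{\lvert \alpha \rvert} \lvert \pd_{\xi}^{\alpha} ( \lvert \lambda \rvert^{1 / 2} \mathrm{i} \xi M_{\lambda} )(\xi) \rvert + \lvert \xi \rvert^{\lvert \alpha \rvert} \lvert \pd_{\xi}^{\alpha} ( ( \xi \otimes \xi ) M_{\lambda} )(\xi) \rvert \leq C
\end{align*}
for all $\alpha \in \{ 0 , 1 \}^n$.

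The decisive quantitative ingredient is the sectorial lower bound on the denominator: for $\lambda \in \Sigma_{\theta}$ and $\xi \in \BR^n$,
\begin{align*}
 \lvert \lambda + \lvert \xi \rvert^2 \rvert \geq \cos (\theta / 2) \, \big( \lvert \lambda \rvert + \lvert \xi \rvert^2 \big),
\end{align*}
obtained by minimizing the ratio $\lvert \lambda + t \rvert^2 / ( \lvert \lambda \rvert + t )^2$ over $t = \lvert \xi \rvert^2 \geq 0$, whose minimum $\cos^2 ( \arg(\lambda) / 2 )$ is attained at $t = \lvert \lambda \rvert$ and is bounded below by $\cos^2 (\theta / 2)$. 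With this estimate, each derivative of $( \lambda + \lvert \xi \rvert^2 )^{-1}$ produces, through the Leibniz rule, an additional factor comparable to $\xi_j ( \lambda + \lvert \xi \rvert^2 )^{-1}$, whose modulus is controlled by $\lvert \xi \rvert / ( \cos(\theta / 2) ( \lvert \lambda \rvert + \lvert \xi \rvert^2 ) )$ and hence carries homogeneity $-1$ in $\xi$; moreover the Helmholtz factor $\I - \xi \otimes \xi / \lvert \xi \rvert^2$ is smooth and positively homogeneous of degree zero away from the origin, so $\lvert \xi \rvert^{\lvert \alpha \rvert} \lvert \pd_{\xi}^{\alpha} ( \I - \xi \otimes \xi / \lvert \xi \rvert^2 ) \rvert \leq C_{\alpha}$. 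The three prefactors are absorbed by the elementary inequalities $\lvert \lambda \rvert \leq \cos(\theta / 2)^{-1} \lvert \lambda + \lvert \xi \rvert^2 \rvert$, $\lvert \lambda \rvert^{1 / 2} \lvert \xi \rvert \leq \tfrac{1}{2} ( \lvert \lambda \rvert + \lvert \xi \rvert^2 )$, and $\lvert \xi \rvert^2 \leq \lvert \lambda \rvert + \lvert \xi \rvert^2$, all consequences of the arithmetic--geometric mean inequality. I expect the main --- though entirely routine --- effort to lie in the combinatorial bookkeeping of the Leibniz expansion: one must check that every term arising at differentiation order $\lvert \alpha \rvert$ has exactly the $\xi$-homogeneity cancelled by $\lvert \xi \rvert^{\lvert \alpha \rvert}$ while its dependence on $\lambda$ stays uniformly bounded. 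Once this is verified for all $\alpha \in \{ 0 , 1 \}^n$, the multiplier theorem yields the three $\CR$-bounds simultaneously, reproducing the estimates of~\cite[Sec.~3]{Shibata_Shimizu}.
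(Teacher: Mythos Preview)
Your proposal is correct and follows the same route as the paper, which does not give an independent argument but simply cites~\cite[Sec.~3]{Shibata_Shimizu}; that reference proves the $\CR$-bounds precisely by writing $(\lambda + A_{p,\BR^n})^{-1}\BP_{p,\BR^n}$ as the Fourier multiplier with symbol $(\lambda+\lvert\xi\rvert^2)^{-1}(\I-\xi\otimes\xi/\lvert\xi\rvert^2)$ and verifying the Mikhlin--Marcinkiewicz conditions uniformly in $\lambda\in\Sigma_\theta$ via the sectorial lower bound $\lvert\lambda+\lvert\xi\rvert^2\rvert\geq c_\theta(\lvert\lambda\rvert+\lvert\xi\rvert^2)$, exactly as you outline. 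Your reduction of the symbol-level $\CR$-boundedness to uniform boundedness through Remark~\ref{Rem: R-boundedness implies uniform boundedness} (the targets being finite-dimensional) is the standard and correct simplification here.
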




\subsection{The Stokes operator on bounded domains}

Let $D \subset \BR^n$ be a bounded Lipschitz domain. As in~\cite{She}, we define for $1 < p < \infty$ the Stokes operator $A_{p , D}$ to be
\begin{align*}
 A_{p , D} u := - \Delta u + \nabla \pi, \quad \text{for } u \in \CD(A_{p , D}) := \{ u \in \W^{1 , p}_{0 , \sigma} (D) \mid \exists \pi \in \L^p (D) \text{ with } - \Delta u + \nabla \pi \in \L^p_{\sigma} (D) \}.
\end{align*}
Here, the relation $- \Delta u + \nabla \pi \in \L^p_{\sigma} (D)$ is understood in the sense of distributions. Notice that due to the Lipschitz boundary one can in general not expect that $u \in \W^{2 , p} (D ; \BC^n)$ and $\pi \in \W^{1 , p} (D)$ holds for $u \in \CD(A_{p , D})$. However, $u \in \W^{2 , p}_{\loc} (D ; \BC^n)$ and $\pi \in \W^{1 , p}_{\loc} (D)$ holds by inner regularity~\cite[Thm.~IV.4.1]{G}. We summarize useful properties, extending the seminal paper of Shen~\cite{She}. These statements can be found in~\cite[Prop.~13]{KW},~\cite[Thm.~5.2.24]{Tol-phd}, and~\cite[Thm.~1.1]{Tol}.

\begin{prop}
\label{prop-3.2}
Let $D$ be a bounded Lipschitz domain in $\BR^n$ and $\theta \in (0 , \pi)$. Then there exists a positive constant $\varepsilon > 0$ depending only on $n$, $\theta$, and the Lipschitz geometry of $D$ such that for all $p \in (1, \infty)$ satisfying
\begin{align}
\label{Eq: Condition on p on bounded domains}
 \Big\lvert \frac{1}{p} - \frac{1}{2} \Big\rvert < \frac{1}{2n} + \varepsilon,
\end{align}
it holds $\Sigma_{\theta} \subset \rho( - A_{p , D})$ and there exists a constant $C > 0$ such that
\begin{align*}
 \CR_{\L^p (D ; \BC^n) \to \L^p_{\sigma} (D)} \big\{\lambda (\lambda + A_{p , D})^{-1} \BP_{p , D} \mid \lambda \in \Sigma_{\theta}\big\} \leq C.
\end{align*}
Moreover, for all these $p$ it holds $\CD (A^{1/2}_{p, D}) = \W^{1,p}_{0,\sigma} (D)$ and there exists a constant $C > 0$ such that
\begin{align}
\label{Eq: Square root estimate}
 \| \nabla u \|_{\L^p (D ; \BC^{n^2})} \leq C \| A^{1/2}_{p, D} u \|_{\L^p_{\sigma} (D)} \qquad (u \in \CD(A_{p , D}^{1 / 2})).
\end{align}
\end{prop}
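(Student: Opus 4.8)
The plan is to assemble the three assertions from the $\L^2$ theory together with Shen's real-variable extrapolation and the boundedness of the $\H^{\infty}$-calculus. I would begin on $\L^2_\sigma(D)$, where $A_{2,D}$ is defined through the coercive symmetric sesquilinear form $a(u,v) = \int_D \nabla u : \overline{\nabla v} \dx$ on $\W^{1,2}_{0,\sigma}(D)$, coercivity being a consequence of Poincar\'e's inequality on the bounded set $D$. Hence $A_{2,D}$ is self-adjoint and strictly positive, so $\Sigma_{\theta} \subset \rho(-A_{2,D})$ and $\sup_{\lambda \in \Sigma_{\theta}} \| \lambda (\lambda + A_{2,D})^{-1} \|_{\CL(\L^2_\sigma(D))} < \infty$. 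On the Hilbert space $\L^2_\sigma(D)$ uniform boundedness coincides with $\CR$-boundedness by Remark~\ref{Rem: R-boundedness implies uniform boundedness}, which settles the resolvent bound for $p = 2$; Kato's second representation theorem identifies $\CD(A_{2,D}^{1/2}) = \W^{1,2}_{0,\sigma}(D)$ and yields the gradient estimate in this case.

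The core step is the extrapolation to $\L^p$, following Shen~\cite{She}. For $f \in \L^2(D;\BC^n) \cap \L^p(D;\BC^n)$ and $\lambda \in \Sigma_{\theta}$ put $u = (\lambda + A_{2,D})^{-1}\BP_{2,D} f$. Combining the $\L^2$ resolvent bound with Caccioppoli estimates and the interior and boundary regularity of Stokes solutions in Lipschitz domains, one derives a weak reverse H\"older inequality for $\lambda u$ on interior balls and on boundary balls at the natural scale $\lvert \lambda \rvert^{-1/2}$. Shen's real-variable criterion then upgrades the $\L^2$ bound to an $\L^p$ bound precisely in the range for which the reverse H\"older exponent is admissible; the limited regularity afforded by a merely Lipschitz boundary is what forces the range $\lvert 1/p - 1/2 \rvert < 1/(2n) + \varepsilon$ and determines $\varepsilon = \varepsilon(n, \theta, D)$. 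In particular $\Sigma_{\theta} \subset \rho(-A_{p,D})$ for all such $p$.

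To obtain $\CR$-boundedness rather than mere uniform boundedness, I would follow Kunstmann--Weis~\cite{KW} and~\cite{Tol-phd} and exploit that Shen's scheme is insensitive to replacing scalar $\L^p$ norms by $\ell^2$-valued ones. Running the extrapolation for the family $\{\lambda(\lambda + A_{2,D})^{-1}\BP_{2,D}\}_{\lambda}$ acting diagonally on $\ell^2$-valued data, the required $\L^2(D;\ell^2)$ input is nothing but the uniform $\L^2$ bound by Fubini's theorem, while the reverse H\"older estimates transfer verbatim. By the square-function reformulation of Remark~\ref{Rem: Square function estimates}, the resulting bound on $\L^p(D;\ell^2(\BC^n))$ is exactly the asserted $\CR$-bound of the family $\{\lambda(\lambda + A_{p,D})^{-1}\BP_{p,D}\}_{\lambda \in \Sigma_{\theta}}$.

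Finally, $\CR$-sectoriality on the $\mathrm{UMD}$ space $\L^p_\sigma(D)$ furnishes a bounded $\H^{\infty}$-calculus for $A_{p,D}$, as in~\cite{KW}, so that $\CD(A_{p,D}^{1/2})$ is recovered by complex interpolation between $\L^p_\sigma(D)$ and $\CD(A_{p,D})$; interpolating the $\L^2$ identification and using the density of $\C_{c,\sigma}^{\infty}(D)$ one arrives at $\CD(A_{p,D}^{1/2}) = \W^{1,p}_{0,\sigma}(D)$ together with~\eqref{Eq: Square root estimate}, following~\cite{Tol}. I expect the extrapolation to be the main obstacle: establishing the reverse H\"older and off-diagonal bounds with the sharp exponent in a domain that is only Lipschitz, and verifying that Shen's argument carries the $\ell^2$ weight needed to produce $\CR$-boundedness in place of plain boundedness.
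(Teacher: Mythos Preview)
The paper does not prove Proposition~\ref{prop-3.2} at all: it is stated as a summary of known results and simply cites~\cite[Prop.~13]{KW},~\cite[Thm.~5.2.24]{Tol-phd}, and~\cite[Thm.~1.1]{Tol}. Your sketch is therefore not competing with a proof in the paper but rather reconstructing the arguments of those references, and in broad outline it does so correctly: the $\L^2$ form theory, Shen's reverse H\"older extrapolation for the uniform resolvent bound, the $\ell^2$-valued upgrade to obtain $\CR$-boundedness as in~\cite{KW}, and the square-root identification via~\cite{Tol} are the right ingredients.

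There is one genuine slip in the last paragraph. You write that ``$\CR$-sectoriality on the $\mathrm{UMD}$ space $\L^p_\sigma(D)$ furnishes a bounded $\H^{\infty}$-calculus for $A_{p,D}$''. This implication is false in general: $\CR$-sectoriality on a $\mathrm{UMD}$ space is equivalent to maximal regularity (Weis's theorem, Proposition~\ref{Prop: Weis}), not to a bounded $\H^{\infty}$-calculus. In~\cite{KW} the $\H^{\infty}$-calculus is \emph{not} deduced from $\CR$-sectoriality; rather it is obtained by a parallel extrapolation argument, starting from the bounded $\H^{\infty}$-calculus on $\L^2_\sigma(D)$ (which is automatic from self-adjointness) and transferring it to $\L^p_\sigma(D)$ via the same off-diagonal\,/\,weak reverse H\"older machinery applied to the operators $f(A_{2,D})$. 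Once the $\H^{\infty}$-calculus is in place, the interpolation identity $\CD(A_{p,D}^{1/2}) = [\L^p_\sigma(D),\CD(A_{p,D})]_{1/2}$ follows, and the identification with $\W^{1,p}_{0,\sigma}(D)$ is then as in~\cite{Tol}. So your final step needs to be rerouted: extrapolate the $\H^{\infty}$-calculus directly rather than trying to derive it from the $\CR$-bound.
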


A direct consequence of this proposition is the following lemma.

\begin{lemm}
\label{lem-3.7}
Let $D$ be a bounded Lipschitz domain in $\BR^n$ and let $p \in (1, \infty)$ satisfy~\eqref{Eq: Condition on p on bounded domains}.
Then the following estimates hold: For all $\theta \in (0, \pi)$, $\alpha \in (0, 1)$, and $\beta \in [0, 1/2]$ there exists $C > 0$ such that
\begin{align}
\label{3.4}
\CR_{\L^p (D ; \BC^n) \to \L^p_\sigma (D)} \big\{ \lvert \lambda \rvert^\alpha A_{p, D}^{1-\alpha}(\lambda + A_{p, D})^{-1} \BP_{p , D} \mid \lambda \in \Sigma_\theta \big\} \leq C,\\
\label{3.5}
\CR_{\L^p (D ; \BC^n) \to \L^p (D ; \BC^{n^2})}\big\{ \lvert \lambda \rvert^\beta \nabla (\lambda + A_{p, D})^{-1} \BP_{p , D} \mid \lambda \in \Sigma_\theta \big\} \leq C.
\end{align}
\end{lemm}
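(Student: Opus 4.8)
The plan is to manufacture both families out of the single resolvent $\CR$-bound supplied by Proposition~\ref{prop-3.2}, by combining it with the integral representation of fractional powers and the invertibility of $A_{p , D}$. First I would record the auxiliary facts. Since $\BP_{p , D}$ acts as the identity on $\L^p_\sigma (D)$, Proposition~\ref{prop-3.2} shows that $\{ \lambda (\lambda + A_{p , D})^{-1} \}_{\lambda \in \Sigma_\theta}$ is $\CR$-bounded on $\L^p_\sigma (D)$, and restricting $\lambda$ to $(0 , \infty) \subset \Sigma_\theta$ gives the same for $\{ t (t + A_{p , D})^{-1} \}_{t > 0}$. Using $A_{p , D} (\mu + A_{p , D})^{-1} = I - \mu (\mu + A_{p , D})^{-1}$ and that the identity is a single operator, the families $\{ A_{p , D} (\lambda + A_{p , D})^{-1} \}_{\lambda \in \Sigma_\theta}$ and $\{ A_{p , D} (t + A_{p , D})^{-1} \}_{t > 0}$ are $\CR$-bounded as well. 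Finally, \eqref{Eq: Square root estimate} together with the Poincar\'e inequality on the bounded domain $D$ yields $\| u \|_{\L^p_\sigma (D)} \leq C \| A_{p , D}^{1/2} u \|_{\L^p_\sigma (D)}$, so $A_{p , D}^{1/2}$ (hence $A_{p , D}$) is boundedly invertible and $\nabla A_{p , D}^{-1/2} : \L^p_\sigma (D) \to \L^p (D ; \BC^{n^2})$ is bounded.

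For \eqref{3.4}, given $\alpha \in (0 , 1)$ I would use the integral representation of the fractional power, which is valid because $A_{p , D}$ commutes with its resolvents and $A_{p , D}^{-\alpha} = \tfrac{\sin (\pi \alpha)}{\pi} \int_0^\infty t^{-\alpha} (t + A_{p , D})^{-1} \dt$:
\[ A_{p , D}^{1 - \alpha} (\lambda + A_{p , D})^{-1} = \frac{\sin (\pi \alpha)}{\pi} \int_0^\infty t^{-\alpha} A_{p , D} (t + A_{p , D})^{-1} (\lambda + A_{p , D})^{-1} \dt. \]
Multiplying by $\lvert \lambda \rvert^\alpha$ and splitting the integral at $t = \lvert \lambda \rvert$, on $\{ t \geq \lvert \lambda \rvert \}$ I write the integrand as the scalar $\lvert \lambda \rvert^\alpha t^{-\alpha - 1}$ times the operator $t (t + A_{p , D})^{-1} A_{p , D} (\lambda + A_{p , D})^{-1}$, which lies in the $\CR$-bounded product family from the first step, with $\int_{\lvert \lambda \rvert}^\infty \lvert \lambda \rvert^\alpha t^{-\alpha - 1} \dt = 1 / \alpha$. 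On $\{ t \leq \lvert \lambda \rvert \}$ I write it as the scalar $\lambda^{-1} \lvert \lambda \rvert^\alpha t^{-\alpha}$ times $A_{p , D} (t + A_{p , D})^{-1} \lambda (\lambda + A_{p , D})^{-1}$, again $\CR$-bounded, with $\int_0^{\lvert \lambda \rvert} \lvert \lambda \rvert^{\alpha - 1} t^{-\alpha} \dt = 1 / (1 - \alpha)$. As the $\L^1$-norm of each scalar kernel is independent of $\lambda$, the fact that the closed absolutely convex hull of an $\CR$-bounded family is $\CR$-bounded with comparable bound (see~\cite{Denk_Hieber_Pruess}) gives the $\CR$-boundedness of $\{ \lvert \lambda \rvert^\alpha A_{p , D}^{1 - \alpha} (\lambda + A_{p , D})^{-1} \}_{\lambda \in \Sigma_\theta}$ on $\L^p_\sigma (D)$; composing with $\BP_{p , D}$ on the right yields \eqref{3.4}.

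For \eqref{3.5} I would factor $\nabla (\lambda + A_{p , D})^{-1} = ( \nabla A_{p , D}^{-1/2} ) \, A_{p , D}^{1/2} (\lambda + A_{p , D})^{-1}$ using the bounded operator $\nabla A_{p , D}^{-1/2}$ from the first step, reducing \eqref{3.5} to the $\CR$-boundedness of $\{ \lvert \lambda \rvert^\beta A_{p , D}^{1/2} (\lambda + A_{p , D})^{-1} \}_{\lambda \in \Sigma_\theta}$ for $\beta \in [0 , 1/2]$. For $\lvert \lambda \rvert \geq 1$ I write $\lvert \lambda \rvert^\beta = \lvert \lambda \rvert^{\beta - 1/2} \lvert \lambda \rvert^{1/2}$ with $\lvert \lambda \rvert^{\beta - 1/2} \leq 1$ and invoke \eqref{3.4} with $\alpha = 1/2$ together with the contraction principle of Kahane (Proposition~\ref{Prop: Kahane}). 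For $\lvert \lambda \rvert \leq 1$ I use invertibility to write $A_{p , D}^{1/2} (\lambda + A_{p , D})^{-1} = A_{p , D}^{-1/2} A_{p , D} (\lambda + A_{p , D})^{-1}$, where $A_{p , D}^{-1/2}$ is a single bounded operator, the second factor is $\CR$-bounded, and $\lvert \lambda \rvert^\beta \leq 1$, so Proposition~\ref{Prop: Kahane} applies again. Combining the two regimes and composing with $\nabla A_{p , D}^{-1/2}$ on the left and $\BP_{p , D}$ on the right gives \eqref{3.5}.

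The main technical point is the uniform $\L^1$ control of the scalar kernels after splitting at $t = \lvert \lambda \rvert$, which is exactly what makes the absolutely-convex-hull argument applicable uniformly over the whole sector $\Sigma_\theta$. Here the fact that the exponents add to $1$ in case \eqref{3.4} (the power of $A_{p , D}$ being $1 - \alpha$ and of $\lvert \lambda \rvert$ being $\alpha$) is what renders both kernel integrals genuinely $\lambda$-independent, whereas the small-$\lvert \lambda \rvert$ regime of \eqref{3.5} with $\beta < 1/2$ cannot be handled by the integral representation alone and genuinely requires the invertibility of $A_{p , D}$. A secondary point requiring care is the bookkeeping of the scalar-kernel/operator factorization, so that in each piece the operator part provably belongs to one of the fixed $\CR$-bounded product families assembled in the first step.
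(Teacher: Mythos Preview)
Your proof is correct and follows essentially the same route as the paper. For \eqref{3.4} the paper simply cites \cite[Ex.~10.3.5]{HNVW}, which is precisely the Balakrishnan-integral/convex-hull argument you spell out; for \eqref{3.5} the paper avoids your case split on $\lvert\lambda\rvert$ by writing $A_{p,D}^{1/2} = A_{p,D}^{\beta - 1/2}\, A_{p,D}^{1-\beta}$ and invoking \eqref{3.4} directly with $\alpha = \beta$ together with the boundedness of the negative power $A_{p,D}^{\beta - 1/2}$, but the ingredients---the square root identification, the invertibility of $A_{p,D}$, and \eqref{3.4}---are identical.
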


\begin{proof}
First of all, notice that~\eqref{3.4} follows by combining Proposition~\ref{prop-3.2} with~\cite[Ex.~10.3.5]{HNVW}. To prove~\eqref{3.5} let $N \in \BN$, $\lambda_j \in \Sigma_\theta$, and $f_j \in \L^p (D ; \BC^n)$ $(1 \leq j \leq N)$. Applying estimate~\eqref{Eq: Square root estimate} to the function $u := \sum_{j = 1}^N r_j (t) \lvert \lambda_j\rvert^{\beta} (\lambda_j + A_{p , D})^{-1} \BP_{p , D} f_j$ for $0 < t < 1$ followed by the boundedness of $A_{p , D}^{\beta - 1 / 2}$ and~\eqref{3.4} with $\alpha = \beta \in [0 , 1 / 2]$ delivers
\begin{align*}
 &\Big\| \sum_{j = 1}^{N} r_j (\cdot) \lvert \lambda_j \rvert^\beta \nabla (\lambda_j + A_{p, D})^{-1} \BP_{p , D}f_j \Big\|_{\L^2(0 , 1 ; \L^p (D ; \BC^{n^2}))} \\
 &\quad\leq C \Big\| \sum_{j = 1}^{N} r_j (\cdot) \lvert \lambda_j \rvert^\beta A_{p, D}^{1/2} (\lambda_j + A_{p, D})^{-1} \BP_{p , D} f_j \Big\|_{\L^2(0 , 1 ; \L^p_{\sigma} (D))} \leq C \Big\| \sum_{j = 1}^{N} r_j (\cdot) f_j \Big\|_{\L^2(0 , 1 ; \L^p (D ; \BC^n))}. \qedhere
\end{align*}
\end{proof}

\subsection{$\CR$-bounded $\L^p$-$\L^q$-estimates of the Stokes resolvent} In this section, we are going to derive the validity of $\CR$-bounded $\L^p$-$\L^q$-estimates for the Stokes resolvent on the whole space and on bounded Lipschitz domains. To this end, we employ the following abstract version of Stein's interpolation theorem, which is due to Voigt~\cite{Voigt}.

\begin{prop}
\label{Prop: Stein interpolation}
Let $(X_0 , X_1)$ and $(Y_0 , Y_1)$ be interpolation couples, let $\CX$ be dense in $X_0 \cap X_1$ with respect to the intersection space norm, and let $S := \{ w \in \BC \mid 0 \leq \Re(w) \leq 1 \}$. If $(T(z))_{z \in S}$ is a family of linear mappings $T(z) : \CX \to Y_0 + Y_1$ with the following properties:
\begin{enumerate}
 \item For all $x \in \CX$ the function $T(\cdot) x : S \to Y_0 + Y_1$ is continuous, bounded, and analytic on the interior of $S$;
 \item\label{Item: Good bounds} for $j = 0 , 1$ and $x \in \CX$ the function $\BR \ni s \mapsto T(j + \mathrm{i} s)x \in Y_j$ is continuous and 
\begin{align*}
 M_j := \sup\{ \| T(j + \mathrm{i} s) x \|_{Y_j} \mid s \in \BR , x \in \CX , \| x \|_{X_j} \leq 1 \} < \infty.
\end{align*}
\end{enumerate}
Then for all $\theta \in [0 , 1]$ it holds $T(\theta) \CX \subset [Y_0 , Y_1]_{\theta}$ and
\begin{align*}
 \| T(\theta) x \|_{[Y_0 , Y_1]_{\theta}} \leq M_0^{1 - \theta} M_1^{\theta} \| x \|_{[X_0 , X_1]_{\theta}} \qquad (x \in \CX).
\end{align*}
Here, $[X_0 , X_1]_{\theta}$ and $[Y_0 , Y_1]_{\theta}$ denote interpolation spaces with respect to the complex interpolation functor.
\end{prop}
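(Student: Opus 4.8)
The plan is to run the classical three--lines construction that underlies Stein's theorem, with extra care devoted to the fact that $T(z)$ is only available on the dense subspace $\CX$. I will use Calderón's description of complex interpolation: for an interpolation couple $(Z_0,Z_1)$, let $\CG(Z_0,Z_1)$ be the space of bounded continuous $f\colon S\to Z_0+Z_1$ that are analytic in the interior of $S$ and whose boundary traces $t\mapsto f(\mathrm{i}t)$ and $t\mapsto f(1+\mathrm{i}t)$ are bounded and continuous into $Z_0$, resp.\ $Z_1$, normed by
\begin{equation*}
 \|f\|_{\CG}:=\max\Big\{\sup_{t\in\BR}\|f(\mathrm{i}t)\|_{Z_0},\ \sup_{t\in\BR}\|f(1+\mathrm{i}t)\|_{Z_1}\Big\};
\end{equation*}
then $[Z_0,Z_1]_\theta=\{f(\theta)\mid f\in\CG(Z_0,Z_1)\}$ with $\|z\|_{[Z_0,Z_1]_\theta}=\inf\{\|f\|_{\CG}\mid f(\theta)=z\}$, see~\cite{HNVW}. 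Thus, for fixed $x\in\CX$ and $\theta\in(0,1)$ (the endpoints $\theta\in\{0,1\}$ being a direct consequence of~\eqref{Item: Good bounds}), it suffices to construct $F\in\CG(Y_0,Y_1)$ with $F(\theta)=T(\theta)x$ and $\|F\|_{\CG}$ controlled by $M_0^{1-\theta}M_1^{\theta}\|x\|_{[X_0,X_1]_\theta}$ up to vanishing errors. After replacing $M_j$ by $M_j+\eta$ I may assume $M_0,M_1>0$ and let $\eta\searrow0$ at the end.

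The heart of the argument is to manufacture a near--optimal representative of $x$ that takes values in $\CX$. Fix $\epsilon>0$ and pick $g_0\in\CG(X_0,X_1)$ with $g_0(\theta)=x$ and $\|g_0\|_{\CG}\le\|x\|_{[X_0,X_1]_\theta}+\epsilon$; by density of the finite exponential sums $\sum_k e^{\lambda_k z}a_k$ ($a_k\in X_0\cap X_1$, $\lambda_k\in\BR$) in $\CG(X_0,X_1)$~\cite{HNVW}, I may take $g_0$ of this form. Approximating each $a_k$ in the $X_0\cap X_1$--norm by elements of $\CX$ gives $\tilde g(z)=\sum_k e^{\lambda_k z}\tilde a_k$ with $\tilde a_k\in\CX$ and $\|\tilde g-g_0\|_{\CG}$ arbitrarily small; since $\CX$ is a subspace and $x\in\CX$, the remainder $r:=x-\tilde g(\theta)$ again lies in $\CX$ and is small in $X_0\cap X_1$. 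Adding $r$ as a constant function, $g:=\tilde g+r$ then satisfies $g(\theta)=x$, is $\CX$--valued on all of $S$, and obeys $\|g\|_{\CG}\le\|x\|_{[X_0,X_1]_\theta}+2\epsilon$. Writing $g(z)=\sum_k\phi_k(z)x_k$ with $x_k\in\CX$ and scalar entire $\phi_k$, the map $z\mapsto T(z)g(z)=\sum_k\phi_k(z)T(z)x_k$ is well defined, and analytic in the interior of $S$ by the analyticity assumption on each $T(\cdot)x_k$.

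Finally, with a balancing parameter $\gamma\in\BR$ and $\delta>0$, I set
\begin{equation*}
 F(z):=e^{\delta(z-\theta)^2}\,e^{\gamma(z-\theta)}\,T(z)g(z),\qquad\text{so that}\qquad F(\theta)=T(\theta)x.
\end{equation*}
On the boundary lines $z=j+\mathrm{i}t$ ($j=0,1$) the element $g(j+\mathrm{i}t)$ lies in $\CX$, so~\eqref{Item: Good bounds} applies to it directly and yields
\begin{equation*}
 \|F(j+\mathrm{i}t)\|_{Y_j}=e^{\delta((j-\theta)^2-t^2)}e^{\gamma(j-\theta)}\|T(j+\mathrm{i}t)g(j+\mathrm{i}t)\|_{Y_j}\le e^{\delta(j-\theta)^2}e^{\gamma(j-\theta)}M_j\|g\|_{\CG}.
\end{equation*}
The Gaussian factor $e^{-\delta t^2}$ provides decay as $|t|\to\infty$, so $F\in\CG(Y_0,Y_1)$. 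Choosing $\gamma=\log(M_0/M_1)$ equalises the two boundary bounds to $e^{\delta\max\{\theta^2,(1-\theta)^2\}}M_0^{1-\theta}M_1^{\theta}\|g\|_{\CG}$, whence $\|T(\theta)x\|_{[Y_0,Y_1]_\theta}\le\|F\|_{\CG}\le e^{\delta\max\{\theta^2,(1-\theta)^2\}}M_0^{1-\theta}M_1^{\theta}(\|x\|_{[X_0,X_1]_\theta}+2\epsilon)$; sending $\delta,\epsilon,\eta\searrow0$ gives the assertion and, in particular, $T(\theta)\CX\subset[Y_0,Y_1]_\theta$. I expect the genuine difficulty to be the middle step: because $T(z)$ is defined only on $\CX$, the near--optimal representative $g$ must be forced to be $\CX$--valued throughout the strip---above all on the two boundary lines, where~\eqref{Item: Good bounds} is applied to the single vector $g(j+\mathrm{i}t)$---and this is precisely what the density of $\CX$ in $X_0\cap X_1$ together with the $\CX$--valued correction $r$ delivers.
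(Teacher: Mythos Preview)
The paper does not prove this proposition: it is stated as a known result and attributed to Voigt~\cite{Voigt}, with no argument given. Your proposal is a correct reconstruction of the standard proof (and indeed of Voigt's argument): take a near--optimal Calder\'on representative of $x$, replace it by a finite $\CX$--valued exponential sum using the density of $\CX$ in $X_0\cap X_1$, correct at $\theta$ by the $\CX$--valued constant $r$, and then push the resulting $\CX$--valued strip function through $T(\cdot)$ with the usual Gaussian and exponential weights. The only point worth tightening is the invocation of the density of finite exponential sums in $\CG(X_0,X_1)$; depending on whether one requires the boundary traces to vanish at infinity, the approximating functions may need an additional Gaussian prefactor, but since you multiply by $e^{\delta(z-\theta)^2}$ anyway this does not affect the argument.
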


\begin{lemm}
\label{Lem: R-bounded L^p-L^q estimates whole space}
Let $\theta \in (0 , \pi)$. For all $1 < p \leq q < \infty$ with $\sigma := n (1 / p - 1 / q) / 2 \leq 1$ there exists a constant $C > 0$ such that
\begin{align*}
 \CR_{\L^p (\BR^n ; \BC^n) \to \L^q (\BR^n ; \BC^n)} \big\{ \lvert \lambda \rvert^{1 - \sigma} (\lambda + A_{p , \BR^n})^{-1} \BP_{p , \BR^n} \mid \lambda \in \Sigma_{\theta} \big\} \leq C.
\end{align*}
\end{lemm}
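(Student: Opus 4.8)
The plan is to pass, via Remark~\ref{Rem: Square function estimates}, from the asserted $\CR$-bound to a uniform norm estimate for the single operators
\[
 \CS_{N , \vec\lambda} (f_j)_j := \big( \lvert \lambda_j \rvert^{1 - \sigma} (\lambda_j + A_{p , \BR^n})^{-1} \BP_{p , \BR^n} f_j \big)_{j = 1}^N ,
\]
viewed as maps between the $\ell^2$-valued spaces $\L^p (\BR^n ; \ell^2(\BC^n))$ and $\L^q (\BR^n ; \ell^2(\BC^n))$, the bound being uniform in $N \in \BN$ and $\vec\lambda \in \Sigma_\theta^N$. The point is that $A_{p , \BR^n} = - \Delta$ on the range of $\BP_{p , \BR^n}$ and that $\BP_{p , \BR^n}$ is the Fourier multiplier~\eqref{Eq: Helmholtz whole space}, so all operators are Fourier multipliers \emph{independent of the integrability exponent}; they are thus consistently defined on the $p$-independent dense class $\CX$ of finite sequences of Schwartz $\BC^n$-fields whose Fourier transforms vanish near the origin. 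This is the class on which I would run Stein's theorem. For the base case $\sigma = 0$ one has $p = q$ and the estimate is precisely the first $\CR$-bound of Proposition~\ref{prop-2.5}.

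The essential new input is the other base case $\sigma = 1$, i.e. $1/q = 1/p - 2/n$. On $\CX$ the field $u := (\lambda + A_{p , \BR^n})^{-1}\BP_{p,\BR^n} f$ is Schwartz with Fourier transform vanishing near $0$, so it is recovered from its Hessian by $u = - I_2 (\operatorname{tr} \nabla^2 u)$, where $I_2$ is the Riesz potential of order $2$ (convolution with a positive multiple of $\lvert x \rvert^{2 - n}$). Since $I_2$ has a nonnegative kernel, the pointwise bound $\lvert I_2 g \rvert_{\ell^2} \leq I_2 (\lvert g \rvert_{\ell^2})$ combined with the scalar Hardy--Littlewood--Sobolev inequality shows that $g \mapsto - I_2(\operatorname{tr} g)$ is bounded from $\L^p(\BR^n ; \ell^2(\BC^{n^3}))$ into $\L^q(\BR^n ; \ell^2(\BC^n))$. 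Composing this fixed operator with the family $\{ \nabla^2 (\lambda + A_{p , \BR^n})^{-1}\BP_{p,\BR^n} \}$, which is $\CR$-bounded on $\L^p$ by Proposition~\ref{prop-2.5} (read in the square-function formulation of Remark~\ref{Rem: Square function estimates}), yields the $\sigma = 1$ endpoint.

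For $0 < \sigma < 1$ I would interpolate between these endpoints by Proposition~\ref{Prop: Stein interpolation}. Fix $N$ and $\vec\lambda$, and choose $p_0 = q_0$ and $p_1 \in (1 , n/2)$, $q_1$ with $1/q_1 = 1/p_1 - 2/n$ so that $1/p = (1 - \sigma)/p_0 + \sigma/p_1$; since $1/p$ lies in the open interval $(1/p - 1/q , 1) = (2\sigma/n , 1)$, such interior exponents exist (nonempty as $n \geq 3$), and then $[Y_0,Y_1]_\sigma = \L^q(\BR^n;\ell^2(\BC^n))$ holds automatically because $1/q = (1-\sigma)/p_0 + \sigma/q_1 = 1/p - 2\sigma/n$. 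With $X_i = \L^{p_i}(\BR^n ; \ell^2(\BC^n))$, $Y_i = \L^{q_i}(\BR^n ; \ell^2(\BC^n))$, define on the strip $S$ the entire family
\[
 T(z) (f_j)_j := e^{z^2} \big( \lambda_j^{1 - z} (\lambda_j + A_{p,\BR^n})^{-1} \BP_{p,\BR^n} f_j \big)_{j = 1}^N , \qquad \lambda_j^{1-z} := \exp\big((1-z)\operatorname{Log}\lambda_j\big),
\]
the principal branch being holomorphic since $\lambda_j \in \Sigma_\theta$, $\theta < \pi$. On $\Re z = 0$ one has $\lvert \lambda_j^{1 - \mathrm{i} s} \rvert = \lvert \lambda_j \rvert\, e^{s \arg \lambda_j} \leq \lvert \lambda_j \rvert\, e^{\lvert s \rvert \theta}$; in the square-function picture the scalar factors $\lambda_j^{-\mathrm{i}s}$ are absorbed pointwise at cost $e^{\lvert s\rvert\theta}$, the factors $\lvert\lambda_j\rvert$ are handled by the $\sigma=0$ endpoint, and the Gaussian $\lvert e^{(\mathrm{i}s)^2}\rvert = e^{-s^2}$ makes $M_0 = \sup_s C e^{-s^2} e^{\lvert s\rvert\theta} < \infty$. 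On $\Re z = 1$ one has $\lambda_j^{1 - (1+\mathrm{i}s)} = \lambda_j^{-\mathrm{i}s}$ with $\lvert e^{(1+\mathrm{i}s)^2}\rvert = e^{1-s^2}$, so the $\sigma = 1$ endpoint gives $M_1 < \infty$. Stein's theorem then bounds $T(\sigma)$ from $\L^p(\ell^2(\BC^n))$ into $\L^q(\ell^2(\BC^n))$; since $T(\sigma) = e^{\sigma^2}(\lambda_j^{1-\sigma}(\lambda_j+A)^{-1}\BP f_j)_j$ with $\lvert \lambda_j^{1-\sigma}\rvert = \lvert\lambda_j\rvert^{1-\sigma}$, discarding the unimodular phase $e^{\mathrm{i}(1-\sigma)\arg\lambda_j}$ by Kahane's contraction principle (Proposition~\ref{Prop: Kahane}) and dividing by the constant $e^{\sigma^2}$ delivers the required uniform bound on $\CS_{N,\vec\lambda}$.

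The main obstacle is the correct set-up of the interpolation rather than any single estimate: one must run Stein's theorem on the $\ell^2$-valued spaces so that ``$\CR$-bounded'' becomes boundedness of one operator, insert the regularizer $e^{z^2}$ to defeat the growth $e^{\lvert s\rvert\theta}$ created by the imaginary powers $\lambda_j^{-\mathrm{i}s}$, and keep the operators defined on a dense class that is independent of the integrability exponent. The only analytic ingredient beyond the quoted propositions is the Hilbert-valued Hardy--Littlewood--Sobolev inequality used at $\sigma = 1$, which is immediate from the nonnegativity of the Riesz kernel.
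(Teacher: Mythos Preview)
Your argument is correct and follows the same overall strategy as the paper --- Stein interpolation on $\ell^2$-valued $\L^p$-spaces between the $\sigma = 0$ endpoint (Proposition~\ref{prop-2.5}) and the $\sigma = 1$ endpoint (Proposition~\ref{prop-2.5} for $\nabla^2$ combined with a Sobolev-type step). Two technical choices differ. First, the paper takes $T(z)(f_j)_j = (\lvert \lambda_j\rvert^{z}(\lambda_j + A)^{-1}\BP f_j)_j$ with the \emph{real positive} base $\lvert \lambda_j\rvert$; since $\lvert\,\lvert\lambda_j\rvert^{\mathrm{i}s}\rvert = 1$, there is no growth in $\Im z$ and hence no need for the Gaussian regularizer $e^{z^2}$ or for Kahane's contraction principle to strip phases at the end --- your use of the complex power $\lambda_j^{1-z}$ works, but is an avoidable complication. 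Second, the paper interpolates with the \emph{domain} space $X_0 = X_1 = \L^p(\ell^2)$ fixed and only the target varying between $\L^p$ and $\L^r$ (with $1/p - 1/r = 2/n$), which handles all $p < n/2$ directly; a short second interpolation then covers $p \geq n/2$. Your single interpolation with both endpoints moving ($\L^{p_0}\to\L^{p_0}$ versus $\L^{p_1}\to\L^{q_1}$) is a legitimate alternative that dispatches all $(p,q)$ at once, at the price of the more involved exponent bookkeeping. Your Hardy--Littlewood--Sobolev treatment of the $\sigma = 1$ endpoint is equivalent to the paper's direct appeal to the Sobolev embedding $\W^{2,p_1}(\BR^n)\hookrightarrow\L^{q_1}(\BR^n)$ applied to the randomized sums; the latter is slightly cleaner because Sobolev embedding acts on a single function $\sum_j r_j(t)u_j$ and no vector-valued HLS or Minkowski-in-$\ell^2$ step is needed.
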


\begin{proof}
Let $N \in \BN$, fix $\lambda_j \in \Sigma_{\theta}$ $(1 \leq j \leq N)$, and let $1 < p < n / 2$ and $1 / p - 1 / r = 2 / n$. Define
\begin{align*}
 X_0 = X_1 = Y_1 = \L^p (\BR^n ; \ell^2 (\BC^n)) \qquad \text{and} \qquad Y_0 = \L^r (\BR^n ; \ell^2 (\BC^n)).
\end{align*}
Define for $z \in S = \{ w \in \BC \mid 0 \leq \Re(w) \leq 1 \}$
\begin{align*}
 T (z) : X_0 &\to Y_0 +  Y_1, \\
 (f_j)_{j \in \BN} &\mapsto (T_j (z) f_j)_{j \in \BN} := \big( \lvert\lambda_1 \rvert^z (\lambda_1 + A_{p , \BR^n})^{-1} \BP_{p , \BR^n} f_1 , \dots , \lvert \lambda_N \rvert^z (\lambda_N + A_{p , \BR^n})^{-1} \BP_{p , \BR^n} f_N , 0 , \dots \big).
\end{align*}
Clearly, for each $f \in \L^p (\BR^n ; \ell^2 (\BC^n))$ the function $z \mapsto T(z) f$ is continuous and bounded on $S$ and analytic on its interior. \par
To calculate $M_0$ and $M_1$ in Proposition~\ref{Prop: Stein interpolation}~\eqref{Item: Good bounds} let $f = (f_j)_{j \in \BN}$ with $\| f \|_{\L^p (\BR^n ; \ell^2 (\BC^n))} \leq 1$ and $s \in \BR$. Notice that $\lvert \lvert \lambda_j \rvert^z \rvert = \lvert \lambda_j \rvert^{\Re(z)}$. Thus, by virtue of~\eqref{Eq: Equivalence R-sum and square function term} and Proposition~\ref{prop-2.5} there exists a constant $C > 0$ such that
\begin{align*}
 \Big\| \Big[ \sum_{j = 1}^N \lvert T_j(1 + \mathrm{i} s) f_j \rvert^2 \Big]^{1 / 2} \Big\|_{\L^p (\BR^n)} = \Big\| \Big[ \sum_{j = 1}^N \lvert T_j(1) f_j \rvert^2 \Big]^{1 / 2} \Big\|_{\L^p (\BR^n)} \leq C.
\end{align*}
Taking the supremum over $s$ and $f$ delivers $M_1 \leq C$. Notice that $C$ is uniform in $N$ and $\lambda_j$. \par
To bound $M_0$, use~\eqref{Eq: Equivalence R-sum and square function term} and Sobolev's embedding theorem to deduce
\begin{align*}
 \Big\| \Big[ \sum_{j = 1}^N \lvert T_j(\mathrm{i} s) f_j \rvert^2 \Big]^{1 / 2} \Big\|_{\L^r (\BR^n)} \leq C \Big\| \sum_{j = 1}^N \nabla^2 (\lambda_j + A_{p , \BR^n})^{-1} \BP_{p , \BR^n} r_j (\cdot) f_j \Big\|_{\L^2(0 , 1 ; \L^p (\BR^n ; \BC^n))}.
\end{align*}
By virtue of Proposition~\ref{prop-2.5}, the term on the right-hand side can again be bounded by a constant $C > 0$ that is uniform in $N$ and $\lambda_j$. It follows again $M_0 \leq C$. \par
Let $p < q < r$. Then, due to the choice of $r$, it holds for some $\theta \in (0 , 1)$
\begin{align*}
 \frac{1}{q} = \frac{\theta}{p} + \frac{1 - \theta}{r} \qquad \Leftrightarrow \qquad \theta = 1 - \frac{n}{2} \Big( \frac{1}{p} - \frac{1}{q} \Big) = 1 - \sigma.
\end{align*}
Proposition~\ref{Prop: Stein interpolation} implies the existence of a constant $C > 0$ that is uniform in $N$ and $\lambda_j$ such that for all $f = (f_j)_{j \in \BN} \in \L^p (\BR^n ; \ell^2(\BC^n))$ it holds
\begin{align*}
 \Big\| \Big[ \sum_{j = 1}^N \lvert T_j(\theta) f_j \rvert^2 \Big]^{1 / 2} \Big\|_{\L^q (\BR^n)} \leq C \| f \|_{\L^p (\BR^n ; \ell^2(\BC^n))},
\end{align*}
which is the statement of the lemma. \par
The general case $1 < p \leq q < \infty$ with $1 / p - 1 / q \leq 2 / n$ follows from the Stein interpolation theorem as well. Notice that the case $1 / p - 1 / q = 2 / n$ was already covered in the first part of the proof. Thus, let $1 / p - 1 / q < 2 / n$ and choose $1 < r < n / 2$ with $1 / r - 1 / q \leq 2 / n$. Define
\begin{align*}
 X_0 := \L^r (\BR^n ; \ell^2 (\BC^n)), \qquad X_1 = Y_0 = Y_1 := \L^q (\BR^n ; \ell^2 (\BC^n)), \qquad \text{and} \qquad \CX := X_0 \cap X_1.
\end{align*}
Moreover, define for $z \in S$
\begin{align*}
 U (z) : \CX &\to Y_0, \\
 (f_j)_{j \in \BN} &\mapsto \big( \lvert\lambda_1 \rvert^{(1 - z) \nu + z} (\lambda_1 + A_{p , \BR^n})^{-1} \BP_{p , \BR^n} f_1 , \dots , \lvert \lambda_N \rvert^{(1 - z) \nu + z} (\lambda_N + A_{p , \BR^n})^{-1} \BP_{p , \BR^n} f_N , 0 , \dots \big),
\end{align*}
where $\nu := 1 - n (1 / r - 1 / q) / 2$. Stein's interpolation theorem can now be applied in this situation as well while the uniform estimates on $M_0$ and $M_1$ follow by the result of the first part of the proof and Proposition~\ref{prop-2.5}. The proof is completed by the density of $\CX$ in $\L^p (\BR^n ; \ell^2 (\BC^n))$.
\end{proof}

An analogous result holds for the Stokes operator on bounded Lipschitz domains.

\begin{lemm}
\label{Lem: R-bounded L^p-L^q estimates Lipschitz domain}
Let $D \subset \BR^n$ be a bounded Lipschitz domain and $\theta \in (0 , \pi)$. There exists $\varepsilon > 0$ such that for all $1 < p \leq q < \infty$ with $\sigma := n (1 / p - 1 / q) / 2 \leq 1 / 2$ that both satisfy~\eqref{Eq: Condition on p on bounded domains} there exists a constant $C > 0$ such that
\begin{align*}
 \CR_{\L^p (D ; \BC^n) \to \L^q (D ; \BC^n)} \big\{ \lvert \lambda \rvert^{1 - \sigma} (\lambda + A_{p , D})^{-1} \BP_{p , D} \mid \lambda \in \Sigma_{\theta} \big\} \leq C.
\end{align*}
\end{lemm}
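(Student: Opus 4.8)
The plan is to follow the proof of Lemma~\ref{Lem: R-bounded L^p-L^q estimates whole space} essentially verbatim, the only structural change being that on a bounded Lipschitz domain Proposition~\ref{prop-3.2} provides control of \emph{one} spatial derivative of the resolvent rather than two. Consequently the endpoint smoothing exponent will be $\sigma = 1/2$ instead of $\sigma = 1$, which accounts for the restriction $\sigma \leq 1/2$ in the statement. Throughout I will work with the square-function (equivalently $\ell^2$-valued) reformulation of $\CR$-boundedness from Remark~\ref{Rem: Square function estimates} and obtain the result by Stein interpolation (Proposition~\ref{Prop: Stein interpolation}), keeping in mind that~\eqref{Eq: Condition on p on bounded domains} cuts out an \emph{interval} of admissible values of $1/p$, which is therefore stable under the convex combinations produced by interpolation.

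First I would establish the endpoint estimate. Fix $p$ subject to~\eqref{Eq: Condition on p on bounded domains} with $p < n$, and let $p^*$ denote the Sobolev conjugate, $1/p^* = 1/p - 1/n$. The claim is
\[
\CR_{\L^p (D ; \BC^n) \to \L^{p^*} (D ; \BC^n)} \big\{ \lvert \lambda \rvert^{1/2} (\lambda + A_{p , D})^{-1} \BP_{p , D} \mid \lambda \in \Sigma_{\theta} \big\} \leq C.
\]
Writing $u_j := (\lambda_j + A_{p , D})^{-1} \BP_{p , D} f_j \in \W^{1 , p}_{0 , \sigma} (D)$ (using $\CD(A_{p,D}^{1/2}) = \W^{1,p}_{0,\sigma}(D)$ from Proposition~\ref{prop-3.2}), estimate~\eqref{3.5} with $\beta = 1/2$ bounds the $\L^p(D)$-norm of $[\sum_j \lvert \lvert\lambda_j\rvert^{1/2}\nabla u_j \rvert^2]^{1/2}$ by the $\L^p(D)$-norm of $[\sum_j \lvert f_j\rvert^2]^{1/2}$. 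Setting $U := (\lvert\lambda_j\rvert^{1/2} u_j)_j$, the scalar function $\lvert U\rvert_{\ell^2} \in \W^{1,p}_0(D)$ satisfies the pointwise diamagnetic inequality $\lvert \nabla \lvert U\rvert_{\ell^2}\rvert \leq \lvert \nabla U\rvert_{\ell^2}$, so the scalar Sobolev--Poincar\'e embedding $\W^{1,p}_0(D)\hookrightarrow \L^{p^*}(D)$ upgrades the gradient square-function bound in $\L^p$ to a square-function bound for the $u_j$ in $\L^{p^*}$. The crucial point is that $A_{p,D}$ enters only at the level $\L^p$; hence this step needs solely that $p$ is admissible and $p < n$, and requires \emph{no} admissibility of $p^*$.

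Next I would interpolate exactly as in Lemma~\ref{Lem: R-bounded L^p-L^q estimates whole space}, now between the diagonal bound of Proposition~\ref{prop-3.2} (multiplier exponent $1$, target $\L^p$) at $\Re z = 1$ and the endpoint bound just proved (multiplier exponent $1/2$, target $\L^{p^*}$) at $\Re z = 0$. With $X_0 = X_1 = Y_1 = \L^p(D;\ell^2(\BC^n))$, $Y_0 = \L^{p^*}(D;\ell^2(\BC^n))$, and the analytic family carrying the multiplier $\lvert\lambda_j\rvert^{(1+z)/2}$, the constants $M_0,M_1$ of Proposition~\ref{Prop: Stein interpolation} are precisely the two $\CR$-bounds above. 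At the parameter $\theta = 1 - 2\sigma$ one has $[Y_0,Y_1]_\theta = \L^q(D;\ell^2(\BC^n))$ with $1/q = (1-\theta)/p^* + \theta/p$ and multiplier exponent $(1+\theta)/2 = 1 - \sigma$, yielding the assertion for every $q \in [p,p^*]$, i.e.\ for $0 \leq \sigma \leq 1/2$. This settles all admissible pairs with $p < n$. The only remaining pairs have $p \geq n$, which for small $\varepsilon$ can occur only in dimension $n=3$ near the upper admissibility endpoint; there I would mirror the second part of the proof of Lemma~\ref{Lem: R-bounded L^p-L^q estimates whole space}, fixing the admissible target $q$, choosing an admissible $r \leq p$ with $r < n$ and $n(1/r - 1/q)/2 \leq 1/2$ (possible since $1/p \leq 1/n$ and the admissible set is an interval), and interpolating in the domain between $\L^r$ at $\Re z = 0$ (covered by the previous step) and $\L^q$ at $\Re z = 1$ (diagonal), with multiplier exponent $(1-z)\nu + z$, $\nu := 1 - n(1/r - 1/q)/2$; density of $\L^r \cap \L^q$ in $\L^p$ finishes the argument.

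I expect the main obstacle to be the endpoint estimate, specifically the transfer of the scalar gradient square-function bound~\eqref{3.5} through the Sobolev embedding without destroying $\CR$-boundedness; applying the Banach-valued Sobolev--Poincar\'e inequality to $\lvert U\rvert_{\ell^2}$, together with the diamagnetic inequality, is what makes this work and, moreover, avoids any regularity or admissibility requirement at the exponent $p^*$. A secondary but essential bookkeeping point is that every intermediate Lebesgue exponent generated by the two interpolations satisfies~\eqref{Eq: Condition on p on bounded domains}; this is automatic because that condition defines an interval in $1/p$ and is hence preserved under convex combinations, and because the resolvents $(\lambda + A_{p,D})^{-1}\BP_{p,D}$ are consistent across admissible $p$.
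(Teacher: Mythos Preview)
Your proposal is correct and follows essentially the same route as the paper: the endpoint $\L^p \to \L^{p^*}$ estimate comes from the gradient bound~\eqref{3.5} combined with Sobolev embedding, and the general case is reached by Stein interpolation against the diagonal bound of Proposition~\ref{prop-3.2}, with a second domain-side interpolation to cover the remaining admissible $p$. The paper argues the vector-valued Sobolev step by applying the scalar embedding to the Rademacher sum directly, while you phrase it via the diamagnetic inequality for $\lvert U\rvert_{\ell^2}$; these are equivalent.

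One point worth noting: the paper additionally assumes that the Sobolev conjugate $r = p^*$ itself satisfies~\eqref{Eq: Condition on p on bounded domains} (``let $p$ be such that there exists $r$ satisfying \ldots''), whereas you correctly observe that this is unnecessary for the endpoint step, since neither~\eqref{3.5} nor the embedding $\W^{1,p}_0(D)\hookrightarrow \L^{p^*}(D)$ invokes any Stokes-theoretic structure at the exponent $p^*$. This streamlines the first interpolation (it works for every admissible $p<n$ without further restriction) and makes the passage to the second interpolation cleaner; the paper's extra hypothesis on $r$ is harmless but superfluous.
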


\begin{proof}
Let first $p$ additionally satisfy $p < n$ and let $p$ be such that there exists $r$ satisfying
\begin{align*}
 \Big\lvert \frac{1}{r} - \frac{1}{2} \Big\rvert < \frac{1}{2 n} + \varepsilon \qquad \text{and} \qquad \frac{1}{p} - \frac{1}{r} = \frac{1}{n}.
\end{align*}
Notice that such a choice is always possible. Let $N \in \BN$ and fix $\lambda_j \in \Sigma_{\theta}$ $(1 \leq j \leq N)$. Define
\begin{align*}
 X_0 = X_1 = Y_1 = \L^p (D ; \ell^2 (\BC^n)) \qquad \text{and} \qquad Y_0 = \L^r (D ; \ell^2 (\BC^n)).
\end{align*}
Define for $z \in S = \{ w \in \BC \mid 0 \leq \Re(w) \leq 1 \}$
\begin{align*}
 T (z) : X_0 &\to Y_0 +  Y_1, \\
 (f_j)_{j \in \BN} &\mapsto (T_j (z) f_j)_{j \in \BN} := \big( \lvert\lambda_1 \rvert^{\frac{1 + z}{2}} (\lambda_1 + A_{p , D})^{-1} \BP_{p , D} f_1 , \dots , \lvert \lambda_N \rvert^{\frac{1 + z}{2}} (\lambda_N + A_{p , D})^{-1} \BP_{p , D} f_N , 0 , \dots \big).
\end{align*}
To bound $M_0$ in Proposition~\ref{Prop: Stein interpolation}~\eqref{Item: Good bounds}, let $f = (f_j)_{j \in \BN} \in \L^p (D ; \ell^2 (\BC^n))$ with $\| f \|_{\L^p (D ; \ell^2 (\BC^n))} \leq 1$ and let $s \in \BR$. Use~\eqref{Eq: Equivalence R-sum and square function term} and Sobolev's embedding theorem to deduce
\begin{align*}
 \Big\| \Big[ \sum_{j = 1}^N \lvert T_j(\mathrm{i} s) f_j \rvert^2 \Big]^{1 / 2} \Big\|_{\L^r (D)} \leq C \Big\| \sum_{j = 1}^N \nabla \lvert \lambda_j \rvert^{1 / 2} (\lambda_j + A_{p , D})^{-1} \BP_{p , D} r_j (\cdot) f_j \Big\|_{\L^2(0 , 1 ; \L^p (D ; \BC^{n^2}))}.
\end{align*}
By virtue of~\eqref{3.5}, the right-hand side is bounded by a constant $C > 0$ that is uniform in $N$ and $\lambda_j$. Taking the supremum over $s$ and $f$ delivers $M_0 \leq C$. All other estimates follow now literally as in the proof of Lemma~\ref{Lem: R-bounded L^p-L^q estimates whole space} but rely on Proposition~\ref{prop-3.2} instead of Proposition~\ref{prop-2.5}.
\end{proof}

\subsection{Transference of $\L^p$-$\L^q$-estimates}

For further reference, we record the following proposition, which allows for a one-to-one correspondence between $\L^p$-$\L^q$-estimates for the semigroup and for the resolvent.

\begin{prop}
\label{Prop: Transference}
Let $- \CA$ be the generator of a bounded analytic semigroup $(S(z))_{z \in \Sigma_{\theta - \pi / 2} \cup \{ 0 \}}$ for some $\theta \in (\pi / 2 , \pi)$ on a Banach space $X$. Let $\CX \subset X$ and let $Y$ be another Banach space with $X \cap Y \neq \emptyset$. Moreover, let $B$ be a closed operator on $Y$ with domain $\CD(B)$ and let $0 \leq \alpha < 1$. Then, the following are equivalent:
\begin{enumerate}
 \item For all $x \in \CX$ and $z \in \Sigma_{\theta - \pi / 2}$ it holds $S(z) x \in \CD(B)$ and for all $\pi / 2 < \phi < \theta$ there exists $C > 0$ such that 
\begin{align}
\label{Eq: Abstract semigroup estimate}
 \| B S(z) x \|_Y \leq C \lvert z \rvert^{- \alpha} \| x \|_X \qquad (x \in \CX , z \in \Sigma_{\phi - \pi / 2}).
\end{align}
 \item For all $x \in \CX$ and $\lambda \in \Sigma_{\theta}$ it holds $(\lambda + \CA)^{-1} x \in \CD(B)$ and for all $\pi / 2 < \phi < \theta$  there exists $C > 0$ such that
\begin{align}
\label{Eq: Abstract resolvent estimate}
 \| B (\lambda + \CA)^{-1} x \|_Y \leq C \lvert \lambda \rvert^{\alpha - 1} \| x \|_X \qquad (x \in \CX , \lambda \in \Sigma_{\phi}).
\end{align}
\end{enumerate}
\end{prop}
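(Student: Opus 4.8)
The plan is to connect the resolvent $(\lambda + \CA)^{-1}$ and the semigroup $S(z)$ through the two standard integral representations — the sectorially rotated Laplace transform for the implication $(2)\Rightarrow(1)$ wait, for $(1)\Rightarrow(2)$, and the Dunford--Cauchy contour integral for $(2)\Rightarrow(1)$ — and in each case to move the closed operator $B$ inside the (Bochner) integral, reducing the claim to a scalar integral estimate. In both directions the hypotheses guarantee that the relevant vector lies in $\CD(B)$ and that $\lambda \mapsto B(\lambda+\CA)^{-1}x$ (resp.\ $z \mapsto B S(z) x$) is continuous with an integrable majorant. Since $B$ is closed, this legitimizes interchanging $B$ with the integral and, at the same time, yields the membership in $\CD(B)$ asserted in the respective conclusion.

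For $(1)\Rightarrow(2)$, fix $\pi/2 < \phi < \theta$ and $\lambda \in \Sigma_{\phi}$. I would represent the resolvent by
\[
 (\lambda + \CA)^{-1} x = \int_0^\infty e^{-\lambda z}\, S(z) x \; \mathrm{d}z,
\]
where the integration runs along a ray $z = r e^{\mathrm{i} \psi}$, $r > 0$, whose angle $\psi$ is chosen inside the analyticity sector ($|\psi| < \theta - \pi/2$) and so that $\Re(\lambda z) \geq c\, |\lambda|\, r$ with $c > 0$ uniform over $\lambda \in \Sigma_\phi$. Such a $\psi = \psi(\arg \lambda)$ exists precisely because $|\arg \lambda| < \phi < \theta$: the two constraints $|\arg \lambda + \psi| < \pi/2$ and $|\psi| < \theta - \pi/2$ define a nonempty interval of admissible angles exactly when $|\arg \lambda| < \theta$, and the strict inequality $\phi < \theta$ allows one to keep a uniform gap and hence a uniform $c$. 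The identity holds for $\Re \lambda > 0$ by the standard Laplace formula and extends to all of $\Sigma_\phi$ by Cauchy's theorem (rotating the ray within the sector of boundedness of $S$) together with analytic continuation in $\lambda$. Applying $B$, invoking \eqref{Eq: Abstract semigroup estimate}, and estimating yields
\[
 \| B (\lambda + \CA)^{-1} x \|_Y \leq C \int_0^\infty e^{-c |\lambda| r} r^{-\alpha} \; \mathrm{d}r \; \| x \|_X = C\, \Gamma(1 - \alpha)\, (c |\lambda|)^{\alpha - 1} \| x \|_X,
\]
which is \eqref{Eq: Abstract resolvent estimate}; the integral converges at $r = 0$ because $\alpha < 1$.

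For $(2)\Rightarrow(1)$, fix $\pi/2 < \phi < \theta$ and $z \in \Sigma_{\phi - \pi/2}$, pick $\phi' \in (\phi , \theta)$, and use the Dunford--Cauchy representation
\[
 S(z) x = \frac{1}{2 \pi \mathrm{i}} \int_\Gamma e^{z \lambda}\, (\lambda + \CA)^{-1} x \; \mathrm{d}\lambda,
\]
with $\Gamma$ the boundary of $\Sigma_{\phi'}$ oriented so that $\lambda$ runs from $\infty e^{-\mathrm{i}\phi'}$ through the origin to $\infty e^{\mathrm{i}\phi'}$. On the two rays $\lambda = r e^{\pm \mathrm{i}\phi'}$ one checks, using $|\arg z| < \phi - \pi/2$ and $\phi' > \phi$, that $\Re(z \lambda) \leq - c\, |z|\, r$ for a uniform $c > 0$, so $|e^{z\lambda}|$ decays exponentially; the resolvent bound \eqref{Eq: Abstract resolvent estimate} from $(2)$ is available on $\Gamma \subset \Sigma_{\phi'}$. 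To cover the borderline $\alpha = 0$ (where $\int_0 r^{\alpha - 1}\,\mathrm{d}r$ would diverge at the origin), I would deform $\Gamma$ near the origin into a circular arc of radius $|z|^{-1}$, which is permitted by Cauchy's theorem because the integrand is analytic in $\Sigma_\theta \subset \rho(-\CA)$. Splitting $\Gamma$ into the arc and the two outer rays and inserting $\| B (\lambda + \CA)^{-1} x \|_Y \leq C |\lambda|^{\alpha - 1} \| x \|_X$ bounds the arc by $C |z|^{1 - \alpha} \cdot |z|^{-1} = C |z|^{-\alpha}$ and the rays by $|z|^{-\alpha} \int_1^\infty e^{-c u} u^{\alpha - 1} \, \mathrm{d}u \leq C |z|^{-\alpha}$ (finite since $\alpha \leq 1$), giving $\| B S(z) x \|_Y \leq C |z|^{-\alpha} \| x \|_X$, i.e.\ \eqref{Eq: Abstract semigroup estimate}.

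The routine parts are the closedness argument and the scalar integrals. The main obstacle is the geometry of the contours: choosing the ray angle $\psi$ (resp.\ the sector angle $\phi'$) so that the exponential factor decays with a \emph{uniform} rate over the entire sector $\Sigma_\phi$, which is exactly what forces the strict inequality $\phi < \theta$, together with the slightly delicate treatment of the endpoint $\alpha = 0$, handled by the circular arc of radius $|z|^{-1}$.
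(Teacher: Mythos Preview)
Your proposal is correct and follows essentially the same approach as the paper: the Laplace representation along a rotated ray for $(1)\Rightarrow(2)$ and the Dunford--Cauchy formula over two rays plus a circular arc of radius $|z|^{-1}$ for $(2)\Rightarrow(1)$, with $B$ pulled through the Bochner integral via closedness. One minor presentational point: the contour through the origin is never well-defined (the $X$-norm of the integrand is $O(|\lambda|^{-1})$ there regardless of $\alpha$), so the arc deformation is needed from the outset, not only at the endpoint $\alpha=0$ --- the paper simply starts with the deformed contour $\gamma_{|z|}$.
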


\begin{proof}
'$(1) \Rightarrow (2)$': Notice that $- \CA$ generates a bounded analytic semigroup on $\Sigma_{\theta - \pi / 2}$ if and only if for each $\pi / 2 < \vartheta < \theta$ the operator $- \mathrm{e}^{\pm \mathrm{i} (\vartheta - \pi / 2)} \CA$ generates a bounded $C_0$-semigroup $(S_{\pm \vartheta} (t))_{t \geq 0}$ on $X$. Further, notice that 
\begin{align*}
 S_{\pm \vartheta} (t) = S (\mathrm{e}^{\pm \mathrm{i} (\vartheta - \pi / 2)} t) \qquad (t > 0).
\end{align*}
Let $\pi / 2 < \phi < \vartheta < \theta$. Standard semigroup theory implies that the resolvent is represented via the Laplace-transform of the semigroup, \textit{i.e.},
\begin{align*}
 \big(\lambda + \mathrm{e}^{\pm \mathrm{i} (\vartheta - \pi / 2)} \CA \big)^{-1} x = \int_0^{\infty} \mathrm{e}^{- \lambda t} S (\mathrm{e}^{\pm \mathrm{i} (\vartheta - \pi / 2)} t) x \dt \qquad (x \in X , \lambda \in \BC \text{ with } \Re(\lambda) > 0).
\end{align*}
The estimate on the semigroup then implies for $x \in \CX$
\begin{align*}
 \int_0^{\infty} \| \mathrm{e}^{- \lambda t} B S (\mathrm{e}^{\pm \mathrm{i} (\vartheta - \pi / 2)} t) x \|_Y \dt \leq C \int_0^{\infty} \mathrm{e}^{- \Re(\lambda) t} t^{- \alpha} \dt \| x \|_X = C^{\prime} \Re(\lambda)^{\alpha - 1} \| x \|_X.
\end{align*}
This implies that $(\mathrm{e}^{\mp \mathrm{i} (\vartheta - \pi / 2)} \lambda + \CA)^{-1} x \in \CD(B)$ and~\eqref{Eq: Abstract resolvent estimate} on the sector $\Sigma_{\phi}$. \par
'$(2) \Rightarrow (1)$': For this direction, let $\pi / 2 < \phi < \vartheta < \theta$ and notice that for $z \in \Sigma_{\phi - \pi / 2}$ the semigroup has a representation via the Cauchy formula
\begin{align}
\label{Eq: Cauchy formula}
 S(z) = \frac{1}{2 \pi \mathrm{i}} \int_{\gamma_{\lvert z \rvert}} \mathrm{e}^{z \lambda} (\lambda + \CA)^{-1} \; \mathrm{d} \lambda,
\end{align}
where $\gamma_{\lvert z \rvert} = - \gamma_1 - \gamma_2 + \gamma_3$ is the path given by
\begin{align*}
 \gamma_{1 / 3} : [\lvert z \rvert^{-1} , \infty) \to \BC, \quad \gamma_{1 / 3} (t) := t \mathrm{e}^{\pm \mathrm{i} \vartheta} \qquad \text{and} \qquad \gamma_2 : [- \vartheta , \vartheta] \to \BC, \quad \gamma_2 (t) := \lvert z \rvert^{-1} \mathrm{e}^{\mathrm{i} t}.
\end{align*}
For $x \in \CX$ the estimate~\eqref{Eq: Abstract semigroup estimate} follows now by estimating~\eqref{Eq: Cauchy formula} by virtue of~\eqref{Eq: Abstract resolvent estimate}.
\end{proof}

\begin{corr}
\label{Cor: Lp-Lq gradient estimates whole space}
Let $\theta \in (0 , \pi)$. For all $1 < p \leq q < \infty$ with $\sigma := n (1 / p - 1 / q) / 2 < 1 / 2$ there exists a constant $C > 0$ such that for all $\lambda \in \Sigma_{\theta}$ and $f \in \L^p (\BR^n ; \BC^n)$ it holds
\begin{align*}
  \| \nabla (\lambda + A_{p , \BR^n})^{-1} \BP_{p , \BR^n} f \|_{\L^q (\BR^n ; \BC^{n^2})} \leq C \lvert \lambda \rvert^{\sigma - 1 / 2} \| f \|_{\L^p (\BR^n ; \BC^n)}.
\end{align*}
\end{corr}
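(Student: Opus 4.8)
The plan is to pass the two resolvent estimates that are already available on $\BR^n$ to the bounded analytic semigroup generated by $-A_{p,\BR^n}$, to combine them through the semigroup law, and then to transfer the resulting gradient bound back to the resolvent by means of Proposition~\ref{Prop: Transference}. Write $(S_r(z))$ for the bounded analytic semigroup generated by $-A_{r,\BR^n}$ on $\L^r_\sigma(\BR^n)$. Since every resolvent $(\lambda+A_{r,\BR^n})^{-1}\BP_{r,\BR^n}$ is realized by the same Fourier multiplier, these semigroups are consistent, that is $S_p(z)g=S_q(z)g$ for $g\in\L^p_\sigma(\BR^n)\cap\L^q_\sigma(\BR^n)$; this consistency is what will make the composition below legitimate.

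First I would record two semigroup estimates on a sector $\Sigma_{\phi-\pi/2}$ with $\pi/2<\phi<\pi$. From Proposition~\ref{prop-2.5}, passing from $\CR$-boundedness to uniform boundedness (Remark~\ref{Rem: R-boundedness implies uniform boundedness}), one has $\|\nabla(\lambda+A_{q,\BR^n})^{-1}h\|_{\L^q}\leq C|\lambda|^{-1/2}\|h\|_{\L^q}$ for $h\in\L^q_\sigma(\BR^n)$; by Proposition~\ref{Prop: Transference} with $B=\nabla$ and $\alpha=1/2$ this is equivalent to
\begin{equation*}
 \|\nabla S_q(z)h\|_{\L^q(\BR^n;\BC^{n^2})}\leq C|z|^{-1/2}\|h\|_{\L^q_\sigma(\BR^n)}.
\end{equation*}
Likewise Lemma~\ref{Lem: R-bounded L^p-L^q estimates whole space} (admissible since $\sigma<1/2\leq1$) gives $\|(\lambda+A_{p,\BR^n})^{-1}\BP_{p,\BR^n}f\|_{\L^q}\leq C|\lambda|^{\sigma-1}\|f\|_{\L^p}$, and Proposition~\ref{Prop: Transference} with $B$ the identity and $\alpha=\sigma$ turns this into
\begin{equation*}
 \|S_p(z)\BP_{p,\BR^n}f\|_{\L^q_\sigma(\BR^n)}\leq C|z|^{-\sigma}\|f\|_{\L^p(\BR^n;\BC^n)}.
\end{equation*}

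Then I would combine the two through the semigroup law. For $f\in\C_{c,\sigma}^{\infty}(\BR^n)$, which is dense in $\L^p_\sigma(\BR^n)$ and contained in $\L^q_\sigma(\BR^n)$, I write $S_p(z)\BP_{p,\BR^n}f=S_q(z/2)\,[S_p(z/2)\BP_{p,\BR^n}f]$, which is meaningful because the inner factor lies in $\L^q_\sigma(\BR^n)$ by the smoothing estimate and the two semigroups agree there. Applying the gradient estimate to the outer $\L^q$-factor and the smoothing estimate to the inner one yields
\begin{equation*}
 \|\nabla S_p(z)\BP_{p,\BR^n}f\|_{\L^q(\BR^n;\BC^{n^2})}\leq C|z/2|^{-1/2}\|S_p(z/2)\BP_{p,\BR^n}f\|_{\L^q}\leq C|z|^{-\sigma-1/2}\|f\|_{\L^p},
\end{equation*}
and this extends to all $f\in\L^p(\BR^n;\BC^n)$ by density and boundedness of $\BP_{p,\BR^n}$. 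Since $\sigma<1/2$ forces $\alpha:=\sigma+1/2\in[0,1)$, a final application of Proposition~\ref{Prop: Transference} in the direction $(1)\Rightarrow(2)$ with $B=\nabla$ and this $\alpha$ converts the semigroup bound into the claimed $\|\nabla(\lambda+A_{p,\BR^n})^{-1}\BP_{p,\BR^n}f\|_{\L^q}\leq C|\lambda|^{\sigma-1/2}\|f\|_{\L^p}$. Because the whole-space Stokes semigroup is analytic on $\Sigma_{\pi/2}$, the generation angle may be taken arbitrarily close to $\pi$, so the bound holds on all of $\Sigma_\theta$ for the prescribed $\theta\in(0,\pi)$.

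The hardest point, beyond routine bookkeeping of the exponents $\alpha$ and the sector angles, will be justifying the composition $S_q(z/2)\circ S_p(z/2)$ across two different integrability exponents: one must invoke the consistency of the semigroups so that the output of the inner factor, which lies in $\L^q_\sigma(\BR^n)$ by the smoothing estimate, is a legitimate initial datum for the $\L^q$-semigroup, and one must check the hypotheses of Proposition~\ref{Prop: Transference} in both transference directions, namely the density of the test class, the closedness of $\nabla$, and the analyticity and boundedness of the maps $z\mapsto S_r(z)x$ on the sector.
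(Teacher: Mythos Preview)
Your proof is correct and follows essentially the same route as the paper's: establish the semigroup gradient bound $\|\nabla S(z)\BP_{p,\BR^n}f\|_{\L^q}\leq C|z|^{-\sigma-1/2}\|f\|_{\L^p}$ on a sector and then invoke Proposition~\ref{Prop: Transference} in the direction $(1)\Rightarrow(2)$. The only difference is that the paper obtains the semigroup estimate by citing Kato~\cite[Eq.~$(2.3')$]{K} directly (noting that it extends from real $t$ to complex $z$), whereas you reconstruct it internally from Proposition~\ref{prop-2.5} and Lemma~\ref{Lem: R-bounded L^p-L^q estimates whole space} via two preliminary transferences and the semigroup law $S(z)=S(z/2)S(z/2)$. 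Your version is more self-contained; the paper's is shorter but relies on an external reference.
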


\begin{proof}
This follows by combining~\cite[Eq.~$(2.3^{\prime})$]{K} with Proposition~\ref{Prop: Transference}. Notice that the semigroup estimate in~\cite[Eq.~$(2.3^{\prime})$]{K} is only proved for real values of $t$ but that it also holds for complex values $z \in \Sigma_{\phi}$ for each $0 < \phi < \pi / 2$.
\end{proof}

\begin{corr}
\label{Cor: Lp-Lq gradient estimates Lipschitz domain}
Let $D \subset \BR^n$ be a bounded Lipschitz domain and $\theta \in (0 , \pi)$. Then there exists $\varepsilon > 0$ such that for all $1 < p \leq q < \infty$ that satisfy~\eqref{Eq: Condition on p on bounded domains} and $\sigma := n (1 / p - 1 / q) / 2 < 1 / 2$  there exists a constant $C > 0$ such that for all $\lambda \in \Sigma_{\theta}$ and $f \in \L^p (D ; \BC^n)$ it holds
\begin{align*}
  \| \nabla (\lambda + A_{p , D})^{-1} \BP_{p , D} f \|_{\L^q (D ; \BC^{n^2})} \leq C \lvert \lambda \rvert^{\sigma - 1 / 2} \| f \|_{\L^p (D ; \BC^n)}.
\end{align*}
\end{corr}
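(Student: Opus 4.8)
The plan is to pass from the resolvent to the semigroup, where a smoothing estimate and a gradient estimate combine for free via the semigroup law, and then to transfer the resulting bound back to the resolvent. Write $(T(z))_z$ for the bounded analytic semigroup generated by $-A_{p , D}$, whose existence on every subsector is guaranteed by Proposition~\ref{prop-3.2}. The target is the semigroup gradient estimate
\begin{align*}
 \| \nabla T(z) f \|_{\L^q (D ; \BC^{n^2})} \leq C \lvert z \rvert^{- \sigma - 1/2} \| f \|_{\L^p_{\sigma} (D)} \qquad (z \in \Sigma_{\phi - \pi/2})
\end{align*}
for a suitable $\phi \in (\pi/2 , \pi)$. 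Once this is available, the implication $(1) \Rightarrow (2)$ of Proposition~\ref{Prop: Transference} with $B = \nabla$, $X = \L^p_{\sigma} (D)$, $Y = \L^q (D ; \BC^{n^2})$, and $\alpha = \sigma + 1/2$ produces exactly the claimed resolvent bound, the exponent being $\lvert \lambda \rvert^{\alpha - 1} = \lvert \lambda \rvert^{\sigma - 1/2}$; applying it to $x = \BP_{p , D} f \in \L^p_{\sigma} (D)$ yields the statement for general $f$.

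The semigroup estimate above is assembled from two building blocks, each obtained from an already available resolvent estimate through the converse implication $(2) \Rightarrow (1)$ of Proposition~\ref{Prop: Transference}. First, Lemma~\ref{Lem: R-bounded L^p-L^q estimates Lipschitz domain} together with Remark~\ref{Rem: R-boundedness implies uniform boundedness} yields the uniform bound $\| (\lambda + A_{p , D})^{-1} \BP_{p , D} f \|_{\L^q} \leq C \lvert \lambda \rvert^{\sigma - 1} \| f \|_{\L^p}$ (here $\sigma \leq 1/2$, which is implied by $\sigma < 1/2$); transference with $B = \I$, $X = \L^p_{\sigma}(D)$, $Y = \L^q_{\sigma}(D)$, and $\alpha = \sigma$ gives the smoothing estimate $\| T(z) f \|_{\L^q_{\sigma} (D)} \leq C \lvert z \rvert^{- \sigma} \| f \|_{\L^p_{\sigma} (D)}$. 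Second, estimate~\eqref{3.5} with $\beta = 1/2$ reads $\| \nabla (\lambda + A_{q , D})^{-1} \BP_{q , D} g \|_{\L^q} \leq C \lvert \lambda \rvert^{-1/2} \| g \|_{\L^q}$; transference with $B = \nabla$ and $\alpha = 1/2$ gives $\| \nabla T(z) g \|_{\L^q} \leq C \lvert z \rvert^{-1/2} \| g \|_{\L^q_{\sigma} (D)}$. Both $p$ and $q$ satisfy~\eqref{Eq: Condition on p on bounded domains}, so both resolvent inputs are at hand.

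With these two estimates in place, the semigroup law on the sector, $T(z) = T(z/2) T(z/2)$, yields for $f \in \L^p_{\sigma} (D)$
\begin{align*}
 \| \nabla T(z) f \|_{\L^q} = \| \nabla T(z/2) \, T(z/2) f \|_{\L^q} \leq C \lvert z/2 \rvert^{-1/2} \| T(z/2) f \|_{\L^q_{\sigma}} \leq C \lvert z/2 \rvert^{-1/2} \lvert z/2 \rvert^{-\sigma} \| f \|_{\L^p_{\sigma}},
\end{align*}
which is the desired semigroup gradient estimate after collecting the powers of $\lvert z \rvert$. Here it is essential that $T(z/2) f \in \L^q_{\sigma} (D)$, so that the $\L^q$-gradient estimate may legitimately be chained after the smoothing estimate. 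Feeding the result into $(1) \Rightarrow (2)$ of Proposition~\ref{Prop: Transference} as described closes the argument.

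The delicate points are organizational rather than analytic: in each application of Proposition~\ref{Prop: Transference} one must check that the operator $\I$ or $\nabla$ is closed on the target space with the correct domain, that the resolvent indeed maps into $\L^q_{\sigma}(D)$ (supplied by Lemma~\ref{Lem: R-bounded L^p-L^q estimates Lipschitz domain}) so that the two blocks chain together, and that the sectors match. For the latter, note that by Proposition~\ref{prop-3.2} one has $\Sigma_{\theta} \subset \rho(-A_{p , D})$ for every $\theta \in (0 , \pi)$, so $-A_{p , D}$ is bounded analytic of angle $\pi/2$; hence for any prescribed $\theta \in (0 , \pi)$ the transference angle can be chosen strictly larger and the resolvent estimate is recovered on all of $\Sigma_{\theta}$. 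The single genuine constraint is $\alpha = \sigma + 1/2 < 1$ in the final transference step, and this is exactly the hypothesis $\sigma < 1/2$, which I expect to be the crux of why the statement is formulated with a strict inequality there.
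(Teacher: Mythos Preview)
Your proof is correct and follows the same overall strategy as the paper: obtain the semigroup gradient $\L^p$-$\L^q$-estimate and then transfer it to the resolvent via Proposition~\ref{Prop: Transference}. The only difference is that the paper outsources the semigroup estimate to~\cite[Cor.~1.2]{Tol} (noting its extension to complex times), whereas you reconstruct it from ingredients already in the paper---Lemma~\ref{Lem: R-bounded L^p-L^q estimates Lipschitz domain} and estimate~\eqref{3.5}---by two auxiliary transferences and the semigroup law $T(z) = T(z/2)T(z/2)$; this makes your argument self-contained and avoids the external citation, at the cost of an extra round trip between resolvent and semigroup.
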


\begin{proof}
This follows by combining~\cite[Cor.~1.2]{Tol} with Proposition~\ref{Prop: Transference}. Notice that the semigroup estimate in~\cite[Cor.~1.2]{Tol} is only proved for real values of $t$ but that it also holds for complex values $z \in \Sigma_{\phi}$ for each $0 < \phi < \pi / 2$.
\end{proof}

\section{A pressure estimate on bounded Lipschitz domains}
\label{sec-3}
\noindent
To get access to the $\L^p$-norm of the pressure, we introduce the Bogovski\u{\i} operator, which was constructed by Bogovski\u{\i}~\cite{Bog}, see also Galdi~\cite[Sec.~III.3]{G}. For this purpose, let
\begin{align*}
\L^p_0(D) := \Set{ F\in \L^p(D) | \int_{D} F\dx=0 }.
\end{align*}

\begin{prop}\label{prop-3.4}
Let $D$ be a bounded Lipschitz domain in $\BR^n$, $1 < p < \infty$, and $k \in \BN$.
Then there exists a continuous operator
\begin{align*}
\CB: \L^p (D) \to \W^{1,p}_0 (D ; \BC^n) \qquad \text{with} \qquad \CB \in \CL (\W^{k,p}_0 (D) , \W^{k + 1,p}_0 (D ; \BC^n))
\end{align*}
such that
\begin{align}
\dv (\CB g) = g \qquad (g \in \L^p_0 (D)).
\end{align}
\end{prop}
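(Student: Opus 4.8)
The plan is to follow the classical construction of Bogovski\u{\i}, reducing the general Lipschitz domain to the case of a domain that is star-shaped with respect to an open ball. First I would recall that every bounded Lipschitz domain $D$ can be written as a finite union $D = \bigcup_{j=1}^m G_j$ of subdomains, each star-shaped with respect to some ball $B_j$ with $\overline{B_j} \subset G_j$; this is a standard consequence of the local graph representation in Definition~\ref{Def: Exterior domains} together with the connectedness of $D$. The operator on all of $D$ is then assembled from the operators on the pieces $G_j$ by induction on $m$: given $g \in \L^p_0(D)$ one splits $g = \sum_j g_j$ with $\supp g_j \subset G_j$ and $\int_{G_j} g_j \dx = 0$, using auxiliary bump functions supported in the (nonempty) overlaps $G_j \cap (G_1 \cup \dots \cup G_{j-1})$ to transport the excess mass from one piece to the next, and sets $\CB g := \sum_j \CB_{G_j} g_j$. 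Checking that this redistribution respects the $\L^p$- and $\W^{k,p}$-bounds is the first piece of bookkeeping.

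On a fixed star-shaped domain $G$ with ball $B$, fix $\omega \in \C_c^\infty(B)$ with $\int_B \omega \dx = 1$ and define, for $g \in \L^p(G)$,
\begin{align*}
 (\CB_G g)(x) := \int_G g(y) \, \frac{x - y}{|x - y|^n} \int_{|x-y|}^\infty \omega\Big( y + r \tfrac{x - y}{|x - y|} \Big) r^{n-1} \,\mathrm{d}r \,\mathrm{d}y.
\end{align*}
A direct differentiation of this formula, using $\int_B \omega \dx = 1$ and a change of variables in the inner integral, yields the pointwise identity
\begin{align*}
 \dv(\CB_G g)(x) = g(x) - \omega(x) \int_G g \dx,
\end{align*}
so that $\dv(\CB_G g) = g$ holds exactly when $\int_G g \dx = 0$. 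I would also record here the support property that $\CB_G$ maps $\C_c^\infty(G)$ into $\C_c^\infty(G;\BC^n)$, where the star-shapedness with respect to $B$ enters and which will deliver the vanishing trace.

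The analytic core is the estimate $\| \nabla \CB_G g \|_{\L^p} \le C \| g \|_{\L^p}$. Differentiating the kernel in $x$ splits $\partial_{x_k}(\CB_G g)_i$ into two contributions: a principal part whose kernel is homogeneous of degree $-n$ in $x-y$ and has vanishing mean over spheres, hence is a Calder\'on--Zygmund singular integral bounded on $\L^p(\BR^n)$ for $1 < p < \infty$, and a remainder whose kernel is only weakly singular (of order $|x-y|^{-(n-1)}$ on the bounded set $G$) and is therefore bounded on $\L^p$ by Young's or Schur's inequality. Together with the trivial bound on $\CB_G g$ itself this gives $\CB_G \in \CL(\L^p(G), \W^{1,p}(G;\BC^n))$; combined with the support property and the density of $\C_c^\infty(G)$ in $\L^p(G)$ this upgrades the target to $\W^{1,p}_0(G;\BC^n)$. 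The higher-order statement $\CB_G \in \CL(\W^{k,p}_0(G),\W^{k+1,p}_0(G;\BC^n))$ then follows by commuting derivatives through the integral: each $\partial^\alpha$ with $|\alpha| \le k$ can be integrated by parts onto $g$, reducing every estimate to the same Calder\'on--Zygmund bound applied to the derivatives of $g$, with the zero boundary values again coming from the $\C_c^\infty$ support property and density.

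I expect the main obstacle to be the verification of the Calder\'on--Zygmund conditions for the principal kernel --- establishing the correct homogeneity, the mean-value cancellation over spheres, and the H\"ormander regularity estimate --- since this is what makes the $\L^p$-boundedness valid for all $1 < p < \infty$ rather than only $p = 2$. The second, more technical, difficulty is the mass-redistribution in the gluing step, where one must ensure that the auxiliary corrections supported in the overlaps do not destroy the $\W^{k,p}$-bounds; this is routine but requires care to keep all constants depending only on $D$, $p$, and $k$.
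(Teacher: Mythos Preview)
The paper does not actually prove Proposition~\ref{prop-3.4}; it merely introduces the Bogovski\u{\i} operator by citing Bogovski\u{\i}'s original construction~\cite{Bog} and Galdi~\cite[Sec.~III.3]{G}, and then uses the stated mapping properties as a black box. Your proposal is precisely the classical argument found in those references --- the decomposition into star-shaped pieces, the explicit integral kernel on each piece, the Calder\'on--Zygmund analysis of the gradient, and the mass-redistribution gluing --- so it is correct and supplies exactly the proof the paper outsources.
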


For purposes that come up in the following section, we record the following lemma to treat the operator $\CB$ in Sobolev spaces of negative order.
This was proven by Geissert, Heck, and Hieber~\cite[Thm.~2.5]{GHHa}.

\begin{prop}
Let $D$ be a bounded Lipschitz domain in $\BR^n$ and $1 < p < \infty$. Then the operator $\CB$ defined in Proposition~\ref{prop-3.4} extends to a bounded operator from $\W^{-1,p}_0 (D)$ to $\L^p (D ; \BC^n)$.
Here, the space $\W^{-1,p}_0 (D)$ denotes the dual space of $\W^{1,p'} (D)$.
\end{prop}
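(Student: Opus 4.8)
The plan is to construct the extension by duality, reducing everything to a mapping property of the \emph{formal adjoint} of $\CB$. First I would record that $\L^p(D)$ --- and even $\C_c^\infty(D)$ --- embeds continuously and densely into $\W^{-1,p}_0(D) = (\W^{1,p'}(D))^*$ through $g \mapsto (\psi \mapsto \int_D g \psi \dx)$: the embedding is bounded since $\lvert \int_D g\psi\dx \rvert \leq \|g\|_{\L^p(D)}\|\psi\|_{\W^{1,p'}(D)}$, and it is dense because $\W^{1,p'}(D)$ is reflexive and $\C_c^\infty(D)$ separates its points. Consequently it suffices to prove the a priori estimate $\|\CB g\|_{\L^p(D;\BC^n)} \leq C\|g\|_{\W^{-1,p}_0(D)}$ for $g$ in this dense subspace; the asserted bounded extension is then defined by continuity and automatically coincides with $\CB$ on $\L^p(D)$.

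To prove this a priori estimate I would pass to the adjoint. By $\L^p$--$\L^{p'}$ duality, $\|\CB g\|_{\L^p(D;\BC^n)} = \sup\{ \lvert \int_D \CB g \cdot \phi \dx \rvert : \phi \in \L^{p'}(D;\BC^n),\ \|\phi\|_{\L^{p'}(D)} \leq 1 \}$. Writing $\CB$ with its explicit integral kernel, $\CB g(x) = \int_D N(x,y) g(y)\,\mathrm{d}y$, Fubini's theorem yields $\int_D \CB g \cdot \phi \dx = \int_D g\,(\CB^* \phi)\dx$, where $(\CB^*\phi)(y) := \int_D N(x,y)\cdot\phi(x)\dx$ is the formal adjoint. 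The whole argument then hinges on the claim that $\CB^*$ maps $\L^{p'}(D;\BC^n)$ boundedly into $\W^{1,p'}(D)$. Granting this, the integral $\int_D g\,\CB^*\phi\dx$ is exactly the $\W^{-1,p}_0$--$\W^{1,p'}$ duality pairing applied to $g$, so $\lvert \int_D g\,\CB^*\phi\dx \rvert \leq \|g\|_{\W^{-1,p}_0(D)}\|\CB^*\phi\|_{\W^{1,p'}(D)} \leq C\|g\|_{\W^{-1,p}_0(D)}$, uniformly for $\|\phi\|_{\L^{p'}(D)}\leq 1$; taking the supremum over $\phi$ gives the desired estimate.

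The hard part, and the step I expect to be the main obstacle, is the adjoint mapping property $\CB^*: \L^{p'}(D;\BC^n) \to \W^{1,p'}(D)$. It cannot be obtained merely by dualizing the stated mapping $\CB: \L^{p'}(D) \to \W^{1,p'}_{0}(D;\BC^n)$ of Proposition~\ref{prop-3.4}: that would only control pairings against the zero-trace space $\W^{1,p'}_0(D)$, whereas here the target is the \emph{full} space $\W^{1,p'}(D)$, and it is precisely the boundary behaviour of $\CB^*$ that is at stake. Instead I would argue from the Bogovski\u{\i} kernel directly. The kernel $N(x,y)$ has leading singularity of order $\lvert x-y \rvert^{-(n-1)}$, so $\CB^*$ is a weakly singular (order-one fractional integration) operator; its $\L^{p'}$-boundedness is elementary, and also follows from $\CB:\L^p(D)\to\L^p(D;\BC^n)$ by transposition. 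Differentiating the kernel in $y$ produces $\nabla_y \CB^*\phi$ as a principal-value singular integral with kernel $\nabla_y N(x,y)$, homogeneous of degree $-n$ in its principal part and satisfying the requisite cancellation and H\"ormander conditions, plus a bounded pointwise multiple of $\phi$. This is a Calder\'on--Zygmund operator, hence bounded on $\L^{p'}(D)$ for every $1 < p' < \infty$. These are exactly the symmetric counterparts of the singular-integral estimates underlying Proposition~\ref{prop-3.4}, and can be read off from Bogovski\u{\i}'s construction as presented in Galdi~\cite[Sec.~III.3]{G}; carrying out this kernel analysis carefully is the technical core of the proof.
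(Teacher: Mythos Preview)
The paper does not prove this proposition; it simply cites Geissert, Heck, and Hieber~\cite[Thm.~2.5]{GHHa}. Their argument is quite different from yours: they show that on a star-shaped domain the Bogovski\u{\i} operator, restricted to $\C_c^\infty$, coincides with a pseudodifferential operator whose symbol lies in a H\"ormander class of order $-1$, and then invoke the standard mapping properties of such operators on the full Sobolev scale $\W^{s,p}_0 \to \W^{s+1,p}_0$; the general Lipschitz case follows by the usual decomposition into star-shaped subdomains.

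Your duality route is a legitimate alternative and is more elementary in that it avoids symbol calculus entirely. The reduction to the adjoint bound $\CB^* : \L^{p'}(D;\BC^n) \to \W^{1,p'}(D)$ is correct, and you are right that this does \emph{not} follow by merely transposing Proposition~\ref{prop-3.4}, since one needs the full space $\W^{1,p'}(D)$ rather than $\W^{1,p'}_0(D)$. The kernel analysis you sketch for $\nabla_y N(x,y)$ is sound in principle, but be aware that the Bogovski\u{\i} kernel is genuinely asymmetric in $x$ and $y$ (the bump function $\omega$ enters through the argument $y + \xi(x-y)/\lvert x-y\rvert$), so the computation is not literally the ``symmetric counterpart'' of Galdi's $\nabla_x$ estimates: the terms arising from $\partial_{y_j}\omega(\cdot)$ have to be handled separately, and one must also thread the argument through the star-shaped decomposition. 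None of this is an obstruction, but it is where the work lies. The trade-off is that the pseudodifferential approach in~\cite{GHHa} yields the entire scale of mapping properties $\W^{s,p}_0 \to \W^{s+1,p}_0$ in one stroke, whereas your argument is tailored to the single endpoint $s=-1$ needed here.
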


For $f \in \L^p_\sigma(D)$ consider the equation
\begin{align}
\label{eq-NS-L}
\left\{\begin{aligned}
\lambda u - \Delta u + \nabla\pi & = f && \text{in $D$,} \\
\dv (u) & = 0 && \text{in $D$,} \\
u & = 0 && \text{on $\pd D$}.
\end{aligned}\right.
\end{align}
We next turn to proving a decay estimate in $\lambda$ for the pressure term $\pi$.

\begin{prop}
\label{prop-3.6}
Let $D$ be a bounded Lipschitz domain in $\BR^n$ and $\theta \in (0 , \pi)$. Let $(u_\lambda, \pi_\lambda)$ be the unique solution
to the problem~\eqref{eq-NS-L} such that
$u_\lambda \in \CD (A_{p, D})$ and $\pi_\lambda \in \L^p_0 (D)$.
Define the operator
\begin{align*}
P_\lambda : \L^p_\sigma (D) \to \L^p_0 (D), \quad P_\lambda f:=\pi_\lambda.
\end{align*}
Then there exist positive constants $\varepsilon, C > 0$ and $\delta \in (0 , 1)$, such that for all numbers $p$ satisfying the condition~\eqref{Eq: Condition on p on bounded domains} and all numbers $\alpha$ that satisfy
\begin{align}
\label{alpha}
 0 \leq 2 \alpha < 1 - \frac{1}{p} \quad \text{if} \quad p \geq \frac{2}{1 + \delta} \qquad \text{and} \qquad 0 \leq 2 \alpha < 2 - \frac{3}{p} + \delta \quad \text{if} \quad p < \frac{2}{1 + \delta}
\end{align}
the estimate
\begin{align*}
\CR_{\L^p_\sigma (D) \to \L^p_0 (D)} \big\{ \lvert \lambda \rvert^\alpha P_\lambda \mid \lambda \in \Sigma_\theta \big\} \leq C
\end{align*}
holds.
\end{prop}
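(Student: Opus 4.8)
The plan is to reduce the $\L^p_0$-norm of the pressure to a duality pairing, to insert the Bogovski\u{\i} operator of Proposition~\ref{prop-3.4} as a test field, and then to transfer the $\lambda$-decay onto the velocity by means of the $\CR$-bounded resolvent estimates of Lemma~\ref{lem-3.7}. Since the condition~\eqref{Eq: Condition on p on bounded domains} is symmetric under $p \leftrightarrow p'$, the Helmholtz decomposition of $\L^{p'}(D ; \BC^n)$ and the square root identity $\CD(A_{p' , D}^{1/2}) = \W^{1 , p'}_{0 , \sigma}(D)$ of Proposition~\ref{prop-3.2} are simultaneously available on the dual side, and I will use the adjoint relation $(A_{p , D})^{*} = A_{p' , D}$ freely.

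First I would fix $\lambda \in \Sigma_\theta$, $f \in \L^p_\sigma(D)$, set $u := u_\lambda = (\lambda + A_{p , D})^{-1} \BP_{p , D} f$, and test the weak formulation of~\eqref{eq-NS-L} against $\CB g$ for $g \in \L^{p'}_0(D)$. As $\dv(\CB g) = g$ and $\CB g \in \W^{1 , p'}_0(D ; \BC^n)$, this gives
\begin{align*}
 \langle \pi_\lambda , g \rangle = \lambda \langle u , \CB g \rangle + \langle \nabla u , \nabla \CB g \rangle - \langle f , \CB g \rangle.
\end{align*}
The key simplification is to exploit solenoidality: decomposing $\CB g = w + \nabla q$ with $w := \BP_{p' , D} \CB g \in \L^{p'}_\sigma(D)$ and $\nabla q \in \mathrm{G}_{p'}(D)$, both $u$ and $f$ annihilate the gradient part, so that $\lambda \langle u , \CB g \rangle - \langle f , \CB g \rangle = \langle \lambda u - f , w \rangle = - \langle A_{p , D} u , w \rangle$, where I used $A_{p , D} u = f - \lambda u$. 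Shifting a fractional power across the pairing via the adjoint relation turns this into
\begin{align*}
 \langle \pi_\lambda , g \rangle = - \big\langle A_{p , D}^{1 - \alpha} u , A_{p' , D}^{\alpha} w \big\rangle + \langle \nabla u , \nabla \CB g \rangle,
\end{align*}
valid whenever $w \in \CD(A_{p' , D}^{\alpha})$. The first term now carries the decay: by~\eqref{3.4} the family $\{ \lvert \lambda \rvert^{\alpha} A_{p , D}^{1 - \alpha} (\lambda + A_{p , D})^{-1} \BP_{p , D} \}$ is $\CR$-bounded, hence uniformly bounded, so $\lVert A_{p , D}^{1 - \alpha} u \rVert_{\L^p_\sigma(D)} \leq C \lvert \lambda \rvert^{-\alpha} \lVert f \rVert_{\L^p_\sigma(D)}$; the second term is even better, since~\eqref{3.5} with $\beta = 1/2$ together with the boundedness of $\CB \colon \L^{p'}_0(D) \to \W^{1 , p'}_0(D ; \BC^n)$ yields a factor $\lvert \lambda \rvert^{-1/2}$ and the constraints in~\eqref{alpha} force $\alpha < 1/2$. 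Thus the whole estimate hinges on the $\lambda$-independent dual-side bound
\begin{align*}
 \lVert A_{p' , D}^{\alpha} \BP_{p' , D} \CB g \rVert_{\L^{p'}_\sigma(D)} \leq C \lVert g \rVert_{\L^{p'}_0(D)}.
\end{align*}

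This bound is exactly where the Lipschitz geometry enters and where the two regimes of~\eqref{alpha} and the parameter $\delta$ are born. Although $\CB g \in \W^{1 , p'}_0(D ; \BC^n) = $ a space sitting inside $\CD(A_{p' , D}^{1/2})$ up to solenoidality, the obstruction is that $\BP_{p' , D}$ need not preserve $\W^{1 , p'}_0$ on a merely Lipschitz domain. I would therefore obtain the dual-side bound by interpolating between the trivial estimate on $\L^{p'}_\sigma(D) = \CD(A_{p' , D}^{0})$ and the \emph{limited} fractional smoothness that the Helmholtz projection does retain on Lipschitz domains, the admissible range being controlled by the regularity theory for the associated Neumann problem (of order $3/2 + \delta$). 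Translating these smoothness thresholds through the relevant Sobolev embeddings, and distinguishing whether the target exponent lies below or above the embedding borderline, produces precisely the split into $p \geq 2/(1 + \delta)$ and $p < 2/(1 + \delta)$.

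Finally, to upgrade the resulting norm estimate to the asserted $\CR$-bound I would argue through the square function characterization of Remark~\ref{Rem: Square function estimates}: for finitely many $\lambda_j \in \Sigma_\theta$ and $f_j \in \L^p_\sigma(D)$, the pairing representation above factors $\lvert \lambda_j \rvert^{\alpha} \pi_{\lambda_j}$ through the single $\CR$-bounded family of~\eqref{3.4} (respectively~\eqref{3.5}) followed by the fixed, $\lambda$-independent dual-side operators $A_{p' , D}^{\alpha} \BP_{p' , D} \CB$ and $\nabla \CB$. Since the composition of an $\CR$-bounded family with fixed bounded operators is again $\CR$-bounded, and Kahane's contraction principle (Proposition~\ref{Prop: Kahane}) absorbs the scalar weights $\lvert \lambda_j \rvert^{\alpha}$, the required $\ell^2$-valued inequality follows. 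I expect the main obstacle to be the dual-side regularity estimate: quantifying how much fractional smoothness survives the Helmholtz projection on a Lipschitz domain, and doing so $\CR$-boundedly rather than merely boundedly, is the crux and is precisely what forces the restricted range of $\alpha$ and the appearance of $\delta$.
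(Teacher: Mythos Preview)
Your approach is correct and coincides with the paper's: dualize against $\CB g$, split off the $A_{p,D}$-part, shift a fractional power onto the dual side, and invoke the $\CR$-bounds \eqref{3.4}--\eqref{3.5}; the dual-side inequality $\lVert A_{p',D}^{\alpha}\BP_{p',D}\CB g\rVert_{\L^{p'}_\sigma(D)}\le C\lVert g\rVert_{\L^{p'}_0(D)}$ you isolate is precisely Lemma~\ref{Lem: Mapping properties of Helmholtz and fractional power domains} (applied with exponent $p'$ and $s=2\alpha$), and the case split in~\eqref{alpha} comes out exactly from its hypotheses. One minor point: since the dual-side operator $g\mapsto A_{p',D}^{\alpha}\BP_{p',D}\CB g$ is $\lambda$-independent, ordinary boundedness suffices there---no $\CR$-bound is needed on that piece, and Kahane's contraction principle is not actually required because the weights $\lvert\lambda_j\rvert^{\alpha}$ are already built into the family in~\eqref{3.4}.
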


The proof of Proposition~\ref{prop-3.6} relies on mapping properties of the Helmholtz projection on bounded Lipschitz domains. These mapping properties, which are stated in Lemma~\ref{Lem: Mapping properties of Helmholtz and fractional power domains} are a reformulation of~\cite[Prop.~2.16]{MM}. To arrive at this reformulation recall that, by virtue of~\cite[Thm.~6.6.9]{Haase}, the boundedness of the $\H^{\infty}$-calculus, see~\cite[Thm.~16]{KW}, ensures the complex interpolation identity
\begin{align*}
 \CD(A_{p , D}^{s / 2}) = \big[ \CD(A_{p , D}^0) , \CD(A_{p , D}^{1 / 2}) \big]_{s} \qquad (s \in (0 , 1)).
\end{align*}
Finally, the facts $\CD(A_{p , D}^0) = \L^p_{\sigma} (D)$ and $\CD(A_{p , D}^{1 / 2}) = \W^{1 , p}_{0 , \sigma} (D)$, see Proposition~\ref{prop-3.2}, together with the interpolation result~\cite[Thm.~2.12]{MM} ensure that for $0 \leq s < 1 / p$ it holds
\begin{align*}
 \big[ \CD(A_{p , D}^0) , \CD(A_{p , D}^{1 / 2}) \big]_{s} = \big[ \L^p_{\sigma} (\Omega) , \W^{1 , p}_{0 , \sigma} (\Omega) \big]_{s} = \H^{s , p}_{\sigma} (\Omega).
\end{align*}
Altogether, this argument gives the following lemma.

\begin{lemm}
\label{Lem: Mapping properties of Helmholtz and fractional power domains}
Let $D \subset \BR^n$ be a bounded Lipschitz domain. Then there exists $\delta \in (0 , 1)$ and $\varepsilon > 0$ such that for all numbers $p$ that satisfy~\eqref{Eq: Condition on p on bounded domains} and all $s$ subject to
\begin{align*}
 0 \leq s < \frac{1}{p} \quad \text{if} \quad p \leq \frac{2}{1 - \delta} \qquad \text{and} \qquad 0 \leq s < \frac{3}{p} - 1 + \delta \quad \text{if} \quad \frac{2}{1 - \delta} < p,
\end{align*}
the Helmholtz projection $\BP_{p , D}$ restricts to a bounded operator 
\begin{align*}
 \BP_{p , D} : \H^{s , p} (D ; \BC^n) \to \CD(A_{p , D}^{s / 2}).
\end{align*}
\end{lemm}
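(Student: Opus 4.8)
The plan is to realise the target space $\CD(A_{p , D}^{s / 2})$ as a Bessel potential space of solenoidal vector fields, and then to read off the asserted boundedness of $\BP_{p , D}$ directly from the mapping property~\cite[Prop.~2.16]{MM}. Throughout, I fix $\varepsilon > 0$ small enough and $\delta \in (0 , 1)$ so that the bounded $\H^{\infty}$-calculus of~\cite[Thm.~16]{KW}, the interpolation identity~\cite[Thm.~2.12]{MM}, and~\cite[Prop.~2.16]{MM} are simultaneously available for every $p$ obeying~\eqref{Eq: Condition on p on bounded domains}; concretely $\varepsilon$ is taken as the minimum of the admissibility thresholds of these three inputs.

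First I would pin down the fractional power domain. Since $A_{p , D}$ admits a bounded $\H^{\infty}$-calculus for every such $p$, the interpolation theorem for operators with bounded $\H^{\infty}$-calculus~\cite[Thm.~6.6.9]{Haase} applies and yields, for $s \in (0 , 1)$,
\begin{align*}
 \CD(A_{p , D}^{s / 2}) = \big[ \CD(A_{p , D}^0) , \CD(A_{p , D}^{1 / 2}) \big]_{s} = \big[ \L^p_{\sigma} (D) , \W^{1 , p}_{0 , \sigma} (D) \big]_{s},
\end{align*}
where the second equality uses the identifications $\CD(A_{p , D}^0) = \L^p_{\sigma} (D)$ and $\CD(A_{p , D}^{1 / 2}) = \W^{1 , p}_{0 , \sigma} (D)$ from Proposition~\ref{prop-3.2}. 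Computing the right-hand side by~\cite[Thm.~2.12]{MM}, I obtain for $0 \leq s < 1 / p$ the identification $\CD(A_{p , D}^{s / 2}) = \H^{s , p}_{\sigma} (D)$.

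With this in hand, the lemma reduces to the boundedness of $\BP_{p , D} : \H^{s , p} (D ; \BC^n) \to \H^{s , p}_{\sigma} (D)$, which is exactly~\cite[Prop.~2.16]{MM}. I emphasise that one cannot obtain this by naively interpolating the endpoints $\BP_{p , D} : \L^p (D ; \BC^n) \to \L^p_{\sigma} (D)$ and $\BP_{p , D} : \W^{1 , p} (D ; \BC^n) \to \W^{1 , p}_{0 , \sigma} (D)$, since the second endpoint is not available on a general Lipschitz domain: the image of $\BP_{p , D}$, while solenoidal with vanishing normal trace, need not carry the full boundary regularity built into $\W^{1 , p}_{0 , \sigma} (D)$. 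This is precisely why the sharper, range-restricted statement~\cite[Prop.~2.16]{MM} is needed, and why the second admissible interval $0 \leq s < 3 / p - 1 + \delta$, active for $p > 2 / (1 - \delta)$, appears.

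The remaining point is bookkeeping: checking that the two $s$-ranges are compatible. For $p \leq 2 / (1 - \delta)$ the binding constraint is the interpolation threshold $s < 1 / p$. For $p > 2 / (1 - \delta)$ one has $2 / p < 1 - \delta$, hence $3 / p - 1 + \delta < 1 / p$, so the range coming from~\cite[Prop.~2.16]{MM} already lies inside the interval $s < 1 / p$ on which the identification $\CD(A_{p , D}^{s / 2}) = \H^{s , p}_{\sigma} (D)$ is valid. Consequently both ingredients apply simultaneously on the stated ranges, and composing them gives the claimed bounded operator $\BP_{p , D} : \H^{s , p} (D ; \BC^n) \to \CD(A_{p , D}^{s / 2})$. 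The genuinely hard content is entirely contained in the two imported results—the bounded $\H^{\infty}$-calculus for the Stokes operator and the Helmholtz mapping property on Lipschitz domains—so the main obstacle is not analytic but organisational, namely verifying that these deep inputs and the interpolation identity are all available on a common range of $(s , p)$.
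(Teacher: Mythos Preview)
Your argument is correct and matches the paper's own proof essentially step for step: identify $\CD(A_{p,D}^{s/2})$ with $\H^{s,p}_{\sigma}(D)$ via the bounded $\H^{\infty}$-calculus~\cite[Thm.~16]{KW}, Haase's interpolation theorem~\cite[Thm.~6.6.9]{Haase}, the square-root characterisation in Proposition~\ref{prop-3.2}, and the interpolation result~\cite[Thm.~2.12]{MM}, and then invoke~\cite[Prop.~2.16]{MM} for the mapping property of $\BP_{p,D}$. Your additional bookkeeping paragraph verifying that $3/p - 1 + \delta < 1/p$ when $p > 2/(1-\delta)$ is a helpful consistency check that the paper leaves implicit.
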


Now, we are in the position to present a proof of Proposition~\ref{prop-3.6}.

\begin{proof}[Proof of Proposition~\ref{prop-3.6}]
Let $N \in \BN$, $\lambda_j \in \Sigma_\theta$, and $f_j \in \L^p_\sigma (D)$ $(j = 1 , \dots , N)$. Let $(u_j, \pi_j)$ be the solutions to the equation
\begin{align*}
\left\{\begin{aligned}
\lambda_j u_j - \Delta u_j + \nabla \pi_j & = f_j && \text{in $D$,}\\
\dv (u_j) & = 0 && \text{in $D$,}\\
u_j & = 0 && \text{on $\pd D$}
\end{aligned}\right.
\end{align*}
with $u_j \in \CD (A_{p, D})$ and $\pi_j \in \L^p_0 (D)$ being the pressure associated to $u_j$.
By virtue of Proposition~\ref{prop-3.4}, followed by the identity $A_{p, D} u_j = - \Delta u_j + \nabla \pi_j$
in the sense of distributions it follows for $0 < t < 1$
\begin{equation}
\label{3.7}
\begin{split}
\Big\| \sum_{j = 1}^{N} r_j (t) \lvert \lambda_j \rvert^\alpha \pi_j \Big\|_{\L^p (D)} & = \sup_{\substack{g \in \L^{p'}_0 (D)\\ \| g \|_{\L^{p'} (D)} \leq 1}}
\bigg| \int_D \bigg| \int_D \sum_{j = 1}^{N} r_j (t) \lvert \lambda_j \rvert^\alpha \pi_j \overline{\dv (\CB g)} \dx \bigg| \\
& \leq \sup_{\substack{g \in \L^{p'}_0 (D) \\ \|g\|_{\L^{p'}(D)}\leq1}}
\bigg| \int_D \bigg\langle \sum_{j = 1}^{N} r_j (t) \lvert \lambda_j \rvert^\alpha  A_{p, D} u_j, \CB g \bigg\rangle \dx \bigg|
\\
 &\qquad + \sup_{\substack{g \in \L^{p'}_0 (D) \\ \| g \|_{\L^{p'} (D)} \leq 1}}
\bigg| \int_D \bigg\langle \sum_{j = 1}^{N} r_j (t) \lvert \lambda_j \rvert^\alpha \nabla u_j, \nabla\CB g \bigg\rangle \dx \bigg|.
\end{split}
\end{equation}
Since the Helmholtz projection $\BP_{p , D}$ is the identity on $\L^p_\sigma(D)$, we obtain by duality by virtue of Proposition~\ref{prop-3.4} and Lemma~\ref{Lem: Mapping properties of Helmholtz and fractional power domains} that
\begin{equation}
\label{3.8}
\begin{split}
& \sup_{\substack{g \in \L^{p'}_0 (D) \\ \| g \|_{\L^{p'} (D)} \leq 1}}
\bigg| \int_D \bigg\langle \sum_{j = 1}^{N} r_j (t) \lvert \lambda_j \rvert^\alpha A_{p, D} u_j, \CB g \bigg\rangle \dx \bigg| \\
& = \sup_{\substack{g \in \L^{p'}_0 (D) \\ \| g \|_{\L^{p'} (D)} \leq 1}}
\bigg| \int_{D} \bigg\langle \sum_{j = 1}^{N} r_j (t) \lvert \lambda_j \rvert^\alpha A^{1 - \alpha}_{p, D} u_j, A^{\alpha}_{p' , D} \BP_{p' , D} \CB g \bigg\rangle \dx \bigg| \\
&\leq \bigg\|\sum_{j = 1}^{N}r_j (t) \lvert \lambda_j \rvert^\alpha A^{1 - \alpha}_{p, D} u_j \bigg\|_{\L^p_\sigma (D)}
\sup_{\substack{g \in \L^{p'}_0(D) \\ \| g \|_{\L^{p'} (D)} \leq 1}}
\|A^\alpha_{p' , D} \BP_{p' , D} \CB g \|_{\L^{p'}_{\sigma} (D)} \\
& \leq C \bigg\| \sum_{j = 1}^{N} r_j (t) \lvert \lambda_j \rvert^\alpha A^{1 - \alpha}_{p, D} u_j \bigg\|_{\L^p_\sigma (D)}
\end{split}
\end{equation}
for some constant $C > 0$ and any $\alpha$ satisfying the condition~\eqref{alpha}. By the estimate~\eqref{3.4}, we obtain
\begin{align*}
 \Big\| \sum_{j = 1}^{N} r_j (\cdot) \lvert \lambda_j \rvert^{\alpha} A^{1 - \alpha}_{p, D} u_j \Big\|_{\L^2(0 , 1 ; \L^p_{\sigma} (D))} \leq C \Big\| \sum_{j = 1}^{N} r_j (\cdot) f_j \Big\|_{\L^2 (0 , 1 ; \L^p_{\sigma} (D))}
\end{align*}
which, combined with~\eqref{3.8}, yields that
\begin{align*}
 \bigg\| \sup_{\substack{g \in \L^{p'}_0 (D) \\ \| g \|_{\L^{p'} (D)} \leq 1}}
\bigg| \int_D \bigg\langle \sum_{j = 1}^{N} r_j (\cdot) \lvert \lambda_j \rvert^{\alpha} A_{p, D} u_j, \CB g \bigg\rangle \; \dx \bigg| \; \bigg\|_{\L^2 (0 , 1)}
\leq C \Big\| \sum_{j = 1}^{N} r_j (\cdot) f_j \Big\|_{\L^2(0 , 1 ; \L^p_{\sigma} (D))}.
\end{align*}
In addition, from~\eqref{3.5} together with Proposition~\ref{prop-3.4} we have that
\begin{align*}
 \bigg\| \sup_{\substack{g \in \L^{p'}_0 (D) \\ \| g \|_{\L^{p'} (D)} \leq 1}}
\bigg|\int_D \bigg\langle \sum_{j = 1}^{N} r_j (\cdot) \lvert \lambda_j \rvert^{\alpha} \nabla u_j, \nabla \CB g \bigg\rangle \; \dx \bigg| \bigg\|_{\L^2(0 , 1)}
\leq C \Big\| \sum_{j = 1}^{N} r_j (\cdot) f_j \Big\|_{\L^2(0 , 1 ; \L^p_{\sigma} (D))}
\end{align*}
for $\alpha$ satisfying~\eqref{alpha} and $\lambda_j \in \Sigma_\theta$. In view of~\eqref{3.7} this completes the proof.
\end{proof}

\section{The Stokes operator in exterior Lipschitz domains}
\label{sec-4}
\noindent
This section is devoted to the proofs of Theorems~\ref{th-1.1},~\ref{Thm: Maximal regularity}, and~\ref{Thm: Navier-Stokes}. The proof of these facts relies on the philosophy that the solution to the Stokes resolvent problem can ``almost'' be written as the sum of a solution to a whole space problem and a solution to a problem on an appropriately chosen bounded Lipschitz domain. In view of this, we follow the argument of Geissert \textit{et al}.~\cite{GHHSK} for large resolvent parameters $\lambda$ and perform a refined analysis that resembles in some parts to the argument of Iwashita~\cite{Iwashita} for small values of $\lambda$. \par
Choose $R > 0$ sufficiently large such that $\Omega^c \subset B_R (0) = \{x \in \BR^n \mid |x| < R \}$ and
define
\begin{equation*}
\begin{split}
D & := \Omega \cap B_{R + 5} (0),\\
K_1 & := \{x \in \Omega \mid R < |x| < R + 3 \},\\
K_2 & := \{x \in \Omega \mid R + 2 < |x| < R + 5 \}.
\end{split}
\end{equation*}
Let $\CB_1$ and $\CB_2$ be the Bogovski\u{\i} operators, introduced in Proposition~\ref{prop-3.4}, defined in the domain $K_1$ and $K_2$, respectively.
In addition, let $\varphi, \eta \in \C^\infty (\BR^n)$ be cut-off functions such that $0 \leq \varphi,\eta \leq 1$ and
\begin{align*}
\varphi (x) & =
\begin{cases}
0 & \text{for $|x| \leq R + 1$}, \\
1 & \text{for $|x| \geq R + 2$},
\end{cases} \\
\eta (x) & =
\begin{cases}
1 & \text{for $|x| \leq R + 3$}, \\
0 & \text{for $|x| \geq R + 4$}.
\end{cases}
\end{align*}
For $f \in \L^p (\Omega ; \BC^n)$ denote by $f^R$ the zero extension of $f$ to $\BR^n$.
Notice that $f \in \L^p_{\sigma} (\Omega)$ implies $f^R \in \L^p_\sigma (\BR^n)$ because $\C^\infty_{c, \sigma} (\Omega)$ is dense in $\L^p_\sigma (\Omega)$.
Set $f^D = \eta f - \CB_2 ((\nabla \eta) \cdot f )$, where $\CB_2 ((\nabla \eta) \cdot f )$ is regarded as a function that is extended by zero to all of $\BR^n$. Notice that $f \in \L^p_{\sigma} (\Omega)$ implies $\int_{K_2} (\nabla \eta) \cdot f \dx = 0$ and thus in this case that $f^D \in \L^p_\sigma (D)$. \par

In the following, we will agree upon the following convention for $\varepsilon$ and $p$.

\begin{convention}
\label{Conv}
Let $\varepsilon > 0$ and $p$ be such that the conditions in Proposition~\ref{Prop: Helmholtz on exterior} and~\eqref{Eq: Condition on p on bounded domains} are satisfied. 
\end{convention}

Let $\theta \in (0 , \pi)$ and $\varepsilon > 0$ and $p$ be subject to Convention~\ref{Conv}. For $\lambda \in \Sigma_\theta$ there exist by Propositions~\ref{prop-2.5} and~\ref{prop-3.2} functions $u^R_\lambda$, $u^D_\lambda$, and $\pi^D_\lambda$ satisfying the equations
\begin{align}
\label{eq-4.1}
\left\{\begin{aligned}
\lambda u^R_\lambda - \Delta u^R_\lambda + \nabla g & = f^R && \text{in } \BR^n, \\
\dv (u^R_\lambda) & = 0 && \text{in }\BR^n,
\end{aligned}\right.
\end{align}
and
\begin{align}
\label{eq-4.2}
\left\{\begin{aligned}
\lambda u^D_\lambda - \Delta u^D_\lambda + \nabla\pi^D_\lambda & = f^D && \text{in } D, \\
\dv (u^D_\lambda) & = 0 && \text{in } D, \\
u^D_\lambda & = 0 && \text{on } \pd D.
\end{aligned}\right.
\end{align}
Recall, that $g$ was given by $\nabla g = (\I - \BP_{p , \BR^n}) f^R$. In the following, we normalize $g$ to satisfy
\begin{align}
\label{Eq: Normalization of g}
 \int_D g \dx = 0.
\end{align}
The operator $U_{\lambda} : \L^p (\Omega ; \BC^d) \to \L^p_\sigma (\Omega)$ defined next is ``almost'' the solution operator to the resolvent problem on the exterior domain $\Omega$ with right-hand side $f$. Define $U_{\lambda}$ by
\begin{align}
\label{Eq: The approximate solution}
U_\lambda f := \varphi u^R_\lambda + (1 - \varphi) u^D_\lambda - \CB_1 \big( (\nabla \varphi) \cdot (u^R_\lambda - u^D_\lambda) \big),
\end{align}
where $u^R_\lambda$ and $u^D_\lambda$ are the functions satisfying equations~\eqref{eq-4.1} and~\eqref{eq-4.2}, respectively. Again $\CB_1 ( (\nabla \varphi) \cdot (u^R_\lambda - u^D_\lambda) )$ is regarded as the extension by zero to the whole space. Notice that even though the regularity theory of solutions to the Stokes equations on bounded Lipschitz domains does not allow for $\W^{2 , p}$-regularity of $u_{\lambda}^D$ on $D$, standard inner regularity results, see Galdi~\cite[Thm.~IV.4.1]{G}, yield that $(\nabla \varphi) \cdot (u_{\lambda}^R - u_{\lambda}^D) \in \W^{2 , p}_0 (K_1)$ so that by Proposition~\ref{prop-3.4} the extension by zero of $\CB_1 ( (\nabla \varphi) \cdot (u^R_\lambda - u^D_\lambda) )$ lies in $\W^{3 , p} (\BR^n)$. \par
Moreover, Propositions~\ref{prop-2.5} and~\ref{prop-3.2} imply that $\varphi u^R_\lambda \in \W^{2, p} (\Omega) \cap \W^{1, p}_0 (\Omega)$ and $(1 - \varphi) u^D_\lambda \in \W^{1, p}_0 (\Omega)$.
Thus, we observe that for all $p$ subject to Convention~\ref{Conv} it holds
\begin{align*}
U_\lambda f \in \W^{1,p}_0(\Omega)\cap \L^p_\sigma(\Omega).
\end{align*}

Setting $\Pi_\lambda f := (1 - \varphi) \pi^D_\lambda + \varphi g$, the pair $(U_\lambda f, \Pi_\lambda f)$ satisfies the equation
\begin{align}
\label{eq-4.5}
\left\{\begin{aligned}
(\lambda - \Delta) U_\lambda f + \nabla \Pi_\lambda f & = f + T_\lambda f && \text{in } \Omega, \\
\dv (U_\lambda f) & = 0 && \text{in } \Omega, \\
U_\lambda f & = 0 && \text{on } \pd \Omega
\end{aligned}\right.
\end{align}
in the sense of distributions. Here, $T_\lambda$ is given by
\begin{equation}
\label{4.6}
\begin{split}
T_\lambda f
& := -2[(\nabla \varphi) \cdot \nabla] (u^R_\lambda - u^D_\lambda) - (\Delta \varphi)(u^R_\lambda - u^D_\lambda) \\
& \quad + (\nabla \varphi)(g - \pi^D_\lambda) - (\lambda - \Delta) \CB_1 \big( (\nabla \varphi) \cdot (u^R_\lambda - u^D_\lambda) \big).
\end{split}
\end{equation}
Observe the following two properties concerning the operator $T_{\lambda}$ defined on $\L^p (\Omega ; \BC^n)$ where $p$ satisfies~\eqref{Eq: Condition on p on bounded domains}. First of all, for each function $f \in \L^p (\Omega ; \BC^n)$, the support of $T_{\lambda} f$ lies in the \textit{compact} set $\overline{K_1}$. Second, notice that inner regularity results for the Stokes equations, see~\cite[Thm.~IV.4.1]{G}, imply that $T_{\lambda}$ is a bounded operator from $\L^p (\Omega ; \BC^n)$ to $\W^{1 , p} (\Omega ; \BC^n)$. Thus, in combination with the support property, $T_{\lambda}$ turns out to be a compact operator. This is recorded in the following lemma.

\begin{lemm}
\label{Lem: Properties of T_lambda}
Let $\theta \in (0 , \pi)$, $\lambda \in \Sigma_{\theta}$, and let $p$ be subject to Convention~\ref{Conv}. Then $T_{\lambda}$ satisfies $T_{\lambda} \in \CL(\L^p (\Omega ; \BC^n) , \W^{1 , p} (\Omega ; \BC^n))$, it satisfies for each $f \in \L^p (\Omega ; \BC^n)$ the property $\supp(T_{\lambda} f) \subset \overline{K_1}$, and it is compact on $\L^p (\Omega ; \BC^n)$.
\end{lemm}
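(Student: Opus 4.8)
The plan is to verify the three assertions separately, examining the four terms in the definition~\eqref{4.6} of $T_\lambda$ one at a time; throughout, $\lambda\in\Sigma_\theta$ is fixed, so all constants may depend on $\lambda$. I begin with the support property, which is the cheapest. The functions $\nabla\varphi$ and $\Delta\varphi$ are supported in the annulus $\{x\in\BR^n\mid R+1\le|x|\le R+2\}$, which is compactly contained in $K_1=\{x\in\Omega\mid R<|x|<R+3\}$; hence the first three terms of $T_\lambda f$ are supported in $\overline{K_1}$. For the last term, Proposition~\ref{prop-3.4} shows that $\CB_1$ takes values in functions with support in $\overline{K_1}$, so the same holds after applying $(\lambda-\Delta)$. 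This gives $\supp(T_\lambda f)\subset\overline{K_1}$.

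Next I would prove $T_\lambda\in\CL(\L^p(\Omega;\BC^n),\W^{1,p}(\Omega;\BC^n))$. The decisive point is that $\overline{K_1}$ is a compact subset of $\Omega$ contained in the interior of $D$, at positive distance from $\partial D$ (this is why $R$ was taken large), so inner regularity for the Stokes system~\cite[Thm.~IV.4.1]{G} applies to $u^D_\lambda$ on a neighbourhood of $\overline{K_1}$, while the whole-space solution $u^R_\lambda$ is globally regular. Together with Propositions~\ref{prop-2.5} and~\ref{prop-3.2} this yields $u^R_\lambda\in\W^{2,p}(\BR^n;\BC^n)$, $u^D_\lambda\in\W^{2,p}_{\loc}(D;\BC^n)$, and $\pi^D_\lambda\in\W^{1,p}_{\loc}(D)$, while $\nabla g=(\I-\BP_{p,\BR^n})f^R\in\L^p(\BR^n;\BC^n)$ forces $g\in\W^{1,p}_{\loc}$ near $\overline{K_1}$. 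I would then check the four terms: the first two are smooth compactly supported multiples of $\nabla(u^R_\lambda-u^D_\lambda)$ and of $u^R_\lambda-u^D_\lambda$, hence lie in $\W^{1,p}$ by the local $\W^{2,p}$-regularity; the third is a smooth multiple of $g-\pi^D_\lambda$, which is locally $\W^{1,p}$. For the fourth term I would invoke that $(\nabla\varphi)\cdot(u^R_\lambda-u^D_\lambda)\in\W^{2,p}_0(K_1)$, as already recorded in the construction of $U_\lambda$, and the one-derivative gain of the Bogovski\u{\i} operator from Proposition~\ref{prop-3.4}, namely $\CB_1\in\CL(\W^{2,p}_0(K_1),\W^{3,p}_0(K_1;\BC^n))$, so that $(\lambda-\Delta)\CB_1((\nabla\varphi)\cdot(u^R_\lambda-u^D_\lambda))\in\W^{1,p}$. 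Summing the four estimates gives the asserted boundedness.

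Finally, compactness on $\L^p(\Omega;\BC^n)$ follows by factoring $T_\lambda$ through a Rellich embedding. By the previous step the restriction map $f\mapsto(T_\lambda f)|_{K_1}$ is bounded from $\L^p(\Omega;\BC^n)$ into $\W^{1,p}(K_1;\BC^n)$, and since $\supp(T_\lambda f)\subset\overline{K_1}$ one has $\|T_\lambda f\|_{\L^p(\Omega)}=\|T_\lambda f\|_{\L^p(K_1)}$, with $T_\lambda f$ equal almost everywhere to the extension by zero of its restriction. As $K_1$ is a bounded Lipschitz domain, the Rellich--Kondrachov theorem makes the embedding $\W^{1,p}(K_1;\BC^n)\hookrightarrow\L^p(K_1;\BC^n)$ compact; composing the bounded restriction, this compact embedding, and the isometric extension by zero shows that $T_\lambda\colon\L^p(\Omega;\BC^n)\to\L^p(\Omega;\BC^n)$ is compact.

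I expect the $\W^{1,p}$-bound on the fourth term to be the main obstacle. The Lipschitz boundary of $D$ obstructs global $\W^{2,p}$-regularity of $u^D_\lambda$, so one must localize to the interior region $\overline{K_1}$ — which by construction stays away from $\partial D$ — in order to use inner regularity, and then rely precisely on the smoothing of $\CB_1$ to absorb the second-order operator $(\lambda-\Delta)$ back into $\W^{1,p}$; the support restriction is what then converts this gain of one derivative into compactness via Rellich.
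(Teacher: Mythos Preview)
Your proposal is correct and follows essentially the same approach as the paper. The paper's own argument is the brief discussion immediately preceding the lemma: the support property is read off from the supports of $\nabla\varphi$, $\Delta\varphi$, and the range of $\CB_1$; the $\W^{1,p}$-boundedness is attributed to inner regularity~\cite[Thm.~IV.4.1]{G} (together with the already-recorded fact that $(\nabla\varphi)\cdot(u^R_\lambda-u^D_\lambda)\in\W^{2,p}_0(K_1)$ and hence $\CB_1$ maps it into $\W^{3,p}$); and compactness is obtained by combining the support restriction with the $\W^{1,p}$-bound, i.e., exactly the Rellich--Kondrachov argument you spell out.
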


The further line of action will be split into five consecutive steps. The first step is dedicated to the investigation of the operator $f \mapsto U_{\lambda} f$. Here, estimates with respect to $\lambda$ are established. To obtain estimates to the Stokes resolvent problem on the exterior domain $\Omega$ by means of the operator $U_{\lambda}$ the operator $\I + T_{\lambda}$ and its solenoidal counterpart $\I + \BP_{p , \Omega} T_{\lambda}$ have to be analyzed. In the second step we show that $\BP_{p , \Omega} T_{\lambda}$ regarded as an operator on $\L^p_{\sigma} (\Omega)$ is small for large values of $\lambda$ so that $\I + \BP_{p , \Omega} T_{\lambda}$ can be inverted by a simple Neumann series argument. The third step is much more delicate as here continuity properties of the operator $T_{\lambda}$ for small values of $\lambda$ are studied. In particular, we will show that $T_{\lambda}$ has a well-defined limit as $\lambda \to 0$ that is a compact operator. In the fourth step, the invertibility of $\I + T_{\lambda}$ for small values of $\lambda$ is proven by standard Fredholm theory and a perturbation argument. In the final fifth step, everything will be combined to give the proofs of Theorems~\ref{th-1.1},~\ref{Thm: Maximal regularity}, and~\ref{Thm: Navier-Stokes}.

\subsection*{Step~1: Investigation of the operator $U_{\lambda}$}

We start by giving bounds on the operator norms of the operators $U_{\lambda}$ and $\nabla U_{\lambda}$ in terms of the resolvent parameter $\lambda$.

\begin{lemm}
\label{Lem: Perturbed solution operator}
Let $\Omega \subset \BR^n$ be an exterior Lipschitz domain and $\theta \in (0 , \pi)$. Let $\varepsilon > 0$ and $p \leq q$ satisfy Convention~\ref{Conv} and $\sigma := n ( 1 / p - 1 / q ) / 2 \leq 1 / 2$. Then there exists a constant $C > 0$ such that
\begin{align}
\label{Eq: Ineq 1}
 \CR_{\L^p (\Omega ; \BC^n) \to \L^q_{\sigma} (\Omega)}  \big\{ \lvert \lambda \rvert^{1 - \sigma} U_{\lambda} \mid\ \lambda \in \Sigma_{\theta} \big\} \leq C
\end{align}
and
\begin{align}
\label{Eq: Ineq 2}
 \| U_1 f \|_{\W^{1 , p}_{0 , \sigma} (\Omega)} \leq C \| f \|_{\L^p (\Omega ; \BC^n)}.
\end{align}
If additionally $q < n$, $p < n / 2$, and $\sigma < 1 / 2$, then there exists $C > 0$ such that
\begin{align*}
 \lvert \lambda \rvert^{1 / 2 - \sigma} \| \nabla U_{\lambda} f \|_{\L^q (\Omega ; \BC^{n^2})} \leq C \| f \|_{\L^p (\Omega ; \BC^n)} \qquad (\lambda \in \Sigma_{\theta} , f \in \L^p (\Omega ; \BC^n)).
\end{align*}
\end{lemm}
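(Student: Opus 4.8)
The plan is to exploit that the operator $U_{\lambda}$ defined in~\eqref{Eq: The approximate solution} is a \emph{fixed} linear combination, with $\lambda$-independent coefficients, of the whole-space resolvent solution $u^R_\lambda = (\lambda + A_{p , \BR^n})^{-1} \BP_{p , \BR^n} f^R$ and the bounded-domain resolvent solution $u^D_\lambda = (\lambda + A_{p , D})^{-1} \BP_{p , D} f^D$, glued together by the cut-offs $\varphi$, $1 - \varphi$ and corrected by the Bogovski\u\i{} term $\CB_1((\nabla\varphi) \cdot (u^R_\lambda - u^D_\lambda))$. The maps $f \mapsto f^R$ and $f \mapsto f^D$, the multiplications by the fixed functions $\varphi$ and $\nabla\varphi$, and the operator $\CB_1$ are all single bounded operators (the last two by Proposition~\ref{prop-3.4}). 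Since $\CR$-boundedness is stable under finite sums and under composition on either side with a fixed bounded operator --- equivalently, by passing to the square-function reformulation of Remark~\ref{Rem: Square function estimates} and using the triangle inequality for the $\ell^2$-valued $\L^q$-norm --- every assertion reduces to the corresponding estimate for $u^R_\lambda$ and $u^D_\lambda$ separately, which are already available.

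For~\eqref{Eq: Ineq 1} I would bound the $\L^q_{\sigma}(\Omega)$-norm of $U_{\lambda} f$ term by term. The pieces built from $u^R_\lambda$ are controlled by Lemma~\ref{Lem: R-bounded L^p-L^q estimates whole space} (valid up to $\sigma \leq 1$) and those built from $u^D_\lambda$ by Lemma~\ref{Lem: R-bounded L^p-L^q estimates Lipschitz domain}; the restriction $\sigma \leq 1/2$ in the statement is forced precisely by the latter. For the Bogovski\u\i{} term, $(\nabla\varphi) \cdot (u^R_\lambda - u^D_\lambda)$ is supported in $K_1$, so $\CB_1$ maps it into $\L^q$ with norm controlled by $\| u^R_\lambda - u^D_\lambda \|_{\L^q(K_1)}$, and the same two $\CR$-bounded $\L^p$-$\L^q$ estimates apply. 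Each contribution carries the factor $|\lambda|^{1 - \sigma}$ uniformly in $\lambda$, whence the composite family is $\CR$-bounded. Estimate~\eqref{Eq: Ineq 2} is the special case $\lambda = 1$ and is entirely soft: at $\lambda = 1$ Proposition~\ref{prop-2.5} gives $u^R_1 \in \W^{2 , p}(\BR^n ; \BC^n)$ and Proposition~\ref{prop-3.2} gives $u^D_1 \in \CD(A_{p , D}) \subset \W^{1 , p}_{0 , \sigma}(D)$ (via $\CD(A_{p , D}^{1/2}) = \W^{1 , p}_{0 , \sigma}(D)$ and $\| \nabla u \|_{\L^p} \leq C \| A_{p , D}^{1/2} u \|_{\L^p}$), while the Bogovski\u\i{} term lies in $\W^{2 , p}$ by Proposition~\ref{prop-3.4}; collecting the $\W^{1 , p}$-norms and using that $U_1 f$ is divergence-free with vanishing trace yields the claim.

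For the gradient estimate I would expand $\nabla U_{\lambda} f$ by the product rule into four groups:
\begin{align*}
 \nabla U_{\lambda} f = \varphi \, \nabla u^R_\lambda + (1 - \varphi) \, \nabla u^D_\lambda + (\nabla\varphi) \otimes (u^R_\lambda - u^D_\lambda) - \nabla \CB_1\big( (\nabla\varphi) \cdot (u^R_\lambda - u^D_\lambda) \big).
\end{align*}
The first two terms are treated directly by the gradient $\L^p$-$\L^q$ estimates of Corollary~\ref{Cor: Lp-Lq gradient estimates whole space} and Corollary~\ref{Cor: Lp-Lq gradient estimates Lipschitz domain}, both of which already carry the exact weight $|\lambda|^{\sigma - 1/2}$; multiplying by the bounded cut-offs and then by $|\lambda|^{1/2 - \sigma}$ gives a uniform bound by $\| f \|_{\L^p}$, and here the hypothesis $\sigma < 1/2$ enters. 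The last two groups are lower-order: they involve $u^R_\lambda - u^D_\lambda$ itself, not its gradient, and are supported in the compact annulus $K_1$; since $\nabla \CB_1$ maps $\L^q(K_1)$ boundedly into $\L^q$, both reduce to the single estimate $\| u^R_\lambda - u^D_\lambda \|_{\L^q(K_1)} \leq C |\lambda|^{\sigma - 1/2} \| f \|_{\L^p}$.

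I expect this last estimate, for \emph{small} $\lambda$, to be the main obstacle, since the naive $\L^p$-$\L^q$ bounds give only $|\lambda|^{\sigma - 1}$, which grows too fast as $\lambda \to 0$. The remedy exploits the compactness of $K_1$. For the whole-space piece I would raise the target exponent on $K_1$ to some $\widetilde{q} \geq q$ with $\widetilde{\sigma} := n(1/p - 1/\widetilde{q})/2 = \sigma + 1/2 \leq 1$; such a finite $\widetilde{q}$ exists precisely because $p < n/2$ (which, together with $\sigma < 1/2$, also secures $q < n$), and then Lemma~\ref{Lem: R-bounded L^p-L^q estimates whole space} with the embedding $\L^{\widetilde{q}}(K_1) \hookrightarrow \L^q(K_1)$ yields $\| u^R_\lambda \|_{\L^q(K_1)} \leq C |\lambda|^{\widetilde{\sigma} - 1} = C |\lambda|^{\sigma - 1/2}$. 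For the bounded-domain piece I would instead use that $0 \in \rho(-A_{p , D})$, so that $(\lambda + A_{p , D})^{-1}$ is uniformly bounded for $\lambda$ near $0$; this gives $\| u^D_\lambda \|_{\L^q(K_1)} \leq C \| f \|_{\L^p}$, which after multiplication by $|\lambda|^{1/2 - \sigma} \leq 1$ is harmless. For large $\lambda$ both pieces are dominated by the plain $\L^p$-$\L^q$ decay $|\lambda|^{\sigma - 1} \leq |\lambda|^{\sigma - 1/2}$. Splicing the two regimes completes the estimate.
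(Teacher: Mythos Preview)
Your argument for~\eqref{Eq: Ineq 1} and~\eqref{Eq: Ineq 2} is essentially the paper's: decompose $U_\lambda$ into its three constituents, invoke Lemmas~\ref{Lem: R-bounded L^p-L^q estimates whole space} and~\ref{Lem: R-bounded L^p-L^q estimates Lipschitz domain} together with Proposition~\ref{prop-3.4} for the Bogovski\u\i{} piece, and (for $\lambda=1$) Propositions~\ref{prop-2.5},~\ref{prop-3.2}, Lemma~\ref{lem-3.7}.

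For the gradient estimate you diverge from the paper in the treatment of the zeroth-order terms $(\nabla\varphi)\otimes(u^R_\lambda-u^D_\lambda)$ and $\nabla\CB_1((\nabla\varphi)\cdot(u^R_\lambda-u^D_\lambda))$. The paper does \emph{not} split into small/large $\lambda$ or treat $u^R$ and $u^D$ separately; instead it observes that these terms are supported in the annulus $\mathfrak{A}=\overline{B_{R+2}}\setminus B_{R+1}$ and bounds them, via H\"older on $\mathfrak{A}$ and then the Sobolev inequality $\|v\|_{\L^{q^*}}\le C\|\nabla v\|_{\L^q}$ (with $q^*=nq/(n-q)$, using $q<n$), directly by $\|\nabla u^R_\lambda\|_{\L^q(\BR^n)}+\|\nabla u^D_\lambda\|_{\L^q(D)}$. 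After this reduction everything is handled in one stroke by Corollaries~\ref{Cor: Lp-Lq gradient estimates whole space} and~\ref{Cor: Lp-Lq gradient estimates Lipschitz domain}, which already carry the weight $|\lambda|^{\sigma-1/2}$ uniformly on $\Sigma_\theta$. Your route---upgrade to $\L^{\widetilde q}$ for $u^R_\lambda$ with $\widetilde\sigma=\sigma+\tfrac12$ via Lemma~\ref{Lem: R-bounded L^p-L^q estimates whole space}, and use invertibility of $A_{p,D}$ for $u^D_\lambda$, then splice with the large-$\lambda$ decay---also works, but two points deserve care: the existence of a finite $\widetilde q$ needs only $q<n$ (your computation gives $\widetilde q=nq/(n-q)$), not $p<n/2$; and your uniform bound $\|u^D_\lambda\|_{\L^q(K_1)}\le C\|f\|_{\L^p}$ silently uses the embedding $\CD(A_{p,D})\hookrightarrow\W^{1,p}_{0,\sigma}(D)\hookrightarrow\L^q(D)$ (Sobolev, available since $\sigma<1/2$ gives $1/p-1/q<1/n$), which you should make explicit. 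The paper's device is shorter and avoids the case distinction; yours has the minor advantage of not invoking the homogeneous Sobolev inequality on $\BR^n$.
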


\begin{proof}
First of all, recall the definition of $U_{\lambda}$ in~\eqref{Eq: The approximate solution}. To prove~\eqref{Eq: Ineq 1} let $N \in \BN$, $\lambda_j \in \Sigma_{\theta}$, and $f_j \in \L^p (\Omega ; \BC^n)$ where $1 \leq j \leq N$. An application of Lemmas~\ref{Lem: R-bounded L^p-L^q estimates whole space} and~\ref{Lem: R-bounded L^p-L^q estimates Lipschitz domain} together with the boundedness of Bogovski\u{\i}'s operator from $\L^q_0 (K_1)$ to $\W^{1 , q}_0 (K_1 ; \BC^n)$, see Proposition~\ref{prop-3.4}, imply the estimate
\begin{align*}
 &\Big\| \sum_{j = 1}^N \lvert \lambda_j \rvert^{1 - \sigma} U_{\lambda_j} r_j (\cdot) f_j \Big\|_{\L^2(0 , 1 ; \L^q_{\sigma} (\Omega))} \\
 &\qquad \leq C \bigg( \Big\| \sum_{j = 1}^N \lvert \lambda_j \rvert^{1 - \sigma} r_j (\cdot) u_{\lambda_j}^R \Big\|_{\L^2(0 , 1 ; \L^q_{\sigma} (\BR^n))} + \Big\| \sum_{j = 1}^N \lvert \lambda_j \rvert^{1 - \sigma} r_j (\cdot) u_{\lambda_j}^D \Big\|_{\L^2(0 , 1 ; \L^q_{\sigma} (D))} \bigg) \\
 &\qquad \leq C \Big\| \sum_{j = 1}^N r_j (\cdot) f_j \Big\|_{\L^2(0 , 1 ; \L^p (\Omega ; \BC^n))}.
\end{align*}
Concerning~\eqref{Eq: Ineq 2}, this follows by the product rule and by the boundedness estimates given in Proposition~\ref{prop-2.5}, Proposition~\ref{prop-3.2}, Lemma~\ref{lem-3.7}, and Proposition~\ref{prop-3.4}. \par
Now, let $\lambda \in \Sigma_{\theta}$ and $f \in \L^p (\Omega ; \BC^n)$ and let $p$ and $q$ additionally satisfy $q < n$ and $p < n / 2$ with $\sigma < 1 / 2$. To derive an estimate to $\nabla U_{\lambda}$, notice that the support of $\nabla \varphi$ is contained in the annulus $\mathfrak{A} := \overline{B_{R + 2} (0)} \setminus B_{R + 1} (0)$. Thus, applying H\"older's inequality followed by Sobolev's inequality together with the boundedness of Bogovski\u{\i}'s operator from $\L^q_0 (K_1)$ to $\W^{1 , q}_0 (K_1 ; \BC^n)$ imply with $q^* := nq / (n - q)$
\begin{align*}
 \| \nabla U_{\lambda} f \|_{\L^q(\Omega ; \BC^{n^2})} &\leq \| \nabla u_{\lambda}^R \|_{\L^q(\BR^n ; \BC^{n^2})} + \| \nabla u_{\lambda}^D \|_{\L^q(D ; \BC^{n^2})} + \| u_{\lambda}^R \|_{\L^q (\mathfrak{A} ; \BC^n)} + \| u_{\lambda}^D \|_{\L^q (\mathfrak{A} ; \BC^n)} \\
 &\leq \| \nabla u_{\lambda}^R \|_{\L^q(\BR^n ; \BC^{n^2})} + \| \nabla u_{\lambda}^D \|_{\L^q(D ; \BC^{n^2})} + C \Big( \| u_{\lambda}^R \|_{\L^{q^*} (\BR^n ; \BC^n)} + \| u_{\lambda}^D \|_{\L^{q^*} (D ; \BC^n)} \Big) \\
 &\leq C \Big( \| \nabla u_{\lambda}^R \|_{\L^q(\BR^n ; \BC^{n^2})} + \| \nabla u_{\lambda}^D \|_{\L^q(D ; \BC^{n^2})} \Big).
\end{align*}
Finally, Corollaries~\ref{Cor: Lp-Lq gradient estimates whole space} and~\ref{Cor: Lp-Lq gradient estimates Lipschitz domain} imply
\begin{align*}
 \lvert \lambda \rvert^{1 / 2 - \sigma} \| \nabla U_{\lambda} f \|_{\L^q(\Omega ; \BC^{n^2})} \leq C \| f \|_{\L^p (\Omega ; \BC^n)}. & \qedhere
\end{align*}
\end{proof}


\subsection*{Step~2: Invertibility of $\I + \BP_{p , \Omega} T_{\lambda}$ for large values of $\lambda$}

To obtain decay of the family of operators $T_{\lambda}$ with respect to $\lambda$ it is essential to project these operators onto $\L^p_{\sigma} (\Omega)$, {\em i.e.}, to consider
\begin{align*}
 \BP_{p , \Omega} T_{\lambda} : \L^p_{\sigma} (\Omega) \to \L^p_{\sigma} (\Omega).
\end{align*}
This has the effect that non-decaying terms with respect to $\lambda$, that is $g$ (see~\eqref{4.6}) and a certain term within $\pi_{\lambda}^D$ that exists for non-solenoidal right-hand sides, are eliminated. 

\begin{lemm}
\label{lem-4.1}
Let $\Omega$ be an exterior Lipschitz domain in $\BR^n$ and $\theta \in (0 , \pi)$. Then, for $\varepsilon$ and $p$ subject to Convention~\ref{Conv} and $\alpha$ being a constant satisfying~\eqref{alpha} there exists $C > 0$ satisfying for all $\lambda^* \geq 1$
\begin{align*}
\CR_{\L^p_\sigma (\Omega)} \big\{ \lvert \lambda \rvert^\alpha \BP_{p , \Omega} T_\lambda \mid \lambda \in \Sigma_\theta , \lvert \lambda \rvert \geq \lambda^* \big\} \leq C.
\end{align*}
\end{lemm}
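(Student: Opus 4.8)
The plan is to estimate $T_\lambda$ termwise and to exploit that restricting to solenoidal data kills the non-decaying contributions. Since $\BP_{p , \Omega}$ is a fixed bounded operator on $\L^p (\Omega ; \BC^n)$ by Proposition~\ref{Prop: Helmholtz on exterior}, and composition on the left with a fixed bounded operator preserves $\CR$-bounds, it suffices to $\CR$-bound $\{ \lvert \lambda \rvert^\alpha T_\lambda \mid \lambda \in \Sigma_\theta , \lvert \lambda \rvert \geq 1 \}$ in $\CL(\L^p_\sigma (\Omega) , \L^p (\Omega ; \BC^n))$; as $\{ \lvert \lambda \rvert \geq \lambda^* \} \subset \{ \lvert \lambda \rvert \geq 1 \}$ for $\lambda^* \geq 1$, this yields a constant $C$ independent of $\lambda^*$. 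First I would note that for $f \in \L^p_\sigma (\Omega)$ the zero extension satisfies $f^R \in \L^p_\sigma (\BR^n)$, so $\nabla g = (\I - \BP_{p , \BR^n}) f^R = 0$ and hence $g = 0$ by the normalization~\eqref{Eq: Normalization of g}; moreover $f^D \in \L^p_\sigma (D)$, so that choosing $\pi^D_\lambda \in \L^p_0 (D)$ the pressure estimate of Proposition~\ref{prop-3.6} applies verbatim. With $g = 0$ the formula~\eqref{4.6} reduces to the commutator terms $-2 [(\nabla \varphi) \cdot \nabla] w - (\Delta \varphi) w$ with $w := u^R_\lambda - u^D_\lambda$, the pressure term $-(\nabla \varphi) \pi^D_\lambda$, and the Bogovski\u{\i} term $-(\lambda - \Delta) \CB_1 ((\nabla \varphi) \cdot w)$, all supported in $\overline{K_1}$.

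For the first three groups the decay is immediate. Passing to the square function formulation of Remark~\ref{Rem: Square function estimates}, the term $(\Delta \varphi) w$ is bounded in $\L^p (K_1)$ by $\| w \|_{\L^p (K_1)}$, for which $\lvert \lambda \rvert w$ is $\CR$-bounded by Propositions~\ref{prop-2.5} and~\ref{prop-3.2}; the term $(\nabla \varphi) \cdot \nabla w$ is controlled by $\| \nabla w \|_{\L^p (K_1)}$, for which $\lvert \lambda \rvert^{1/2} \nabla w$ is $\CR$-bounded by Proposition~\ref{prop-2.5} and~\eqref{3.5}; and $(\nabla \varphi) \pi^D_\lambda$ is controlled by $\| \pi^D_\lambda \|_{\L^p (K_1)}$, for which $\lvert \lambda \rvert^\alpha \pi^D_\lambda$ is $\CR$-bounded by Proposition~\ref{prop-3.6}. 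Since~\eqref{alpha} forces $\alpha < 1/2$, the surplus factors $\lvert \lambda \rvert^{\alpha - 1}$ and $\lvert \lambda \rvert^{\alpha - 1/2}$ are bounded by one on $\{ \lvert \lambda \rvert \geq 1 \}$, so Kahane's contraction principle (Proposition~\ref{Prop: Kahane}) absorbs them and leaves the required $\CR$-bound for $\lvert \lambda \rvert^\alpha$ times each of these three terms.

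The main obstacle is the piece $-\lambda \CB_1 ((\nabla \varphi) \cdot w)$, where the prefactor $\lambda$ exactly cancels the decay $\lvert \lambda \rvert^{-1}$ of $w$ and leaves no room. The key idea is to trade $\lambda$ against the resolvent equations: on $\supp (\nabla \varphi)$ one has $f^R = f^D$ and $g = 0$, so subtracting~\eqref{eq-4.1} and~\eqref{eq-4.2} gives the pointwise identity $\lambda w = \Delta w + \nabla \pi^D_\lambda$ there. By linearity of $\CB_1$ this rewrites $\lambda \CB_1 ((\nabla \varphi) \cdot w) = \CB_1 ((\nabla \varphi) \cdot (\Delta w + \nabla \pi^D_\lambda))$, whose argument still has vanishing mean over $K_1$. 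I would then invoke the extension of $\CB_1$ to a bounded operator $\W^{-1 , p}_0 (K_1) \to \L^p (K_1 ; \BC^n)$ and estimate the $\W^{-1 , p}_0$-norm of this argument by integrating by parts once in the defining duality pairing: for $\phi \in \W^{1 , p^\prime} (K_1)$,
\[
 \langle (\nabla \varphi) \cdot (\Delta w + \nabla \pi^D_\lambda) , \phi \rangle = - \sum_{i} \int_{K_1} \nabla w_i \cdot \nabla (\phi \, \partial_i \varphi) \dx - \int_{K_1} \pi^D_\lambda \, \dv (\phi \, \nabla \varphi) \dx,
\]
which is licit because $\nabla \varphi$ is compactly supported in $K_1$ and $w$ is smooth there by interior regularity~\cite[Thm.~IV.4.1]{G}. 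Hence $\| \lambda \CB_1 ((\nabla \varphi) \cdot w) \|_{\L^p} \leq C ( \| \nabla w \|_{\L^p (K_1)} + \| \pi^D_\lambda \|_{\L^p (K_1)})$, which carries the decays $\lvert \lambda \rvert^{-1/2}$ and $\lvert \lambda \rvert^{-\alpha}$ and is treated exactly as in the previous paragraph. The companion piece $\Delta \CB_1 ((\nabla \varphi) \cdot w)$ is harmless: since $(\nabla \varphi) \cdot w \in \W^{1 , p}_0 (K_1)$, Proposition~\ref{prop-3.4} gives $\| \Delta \CB_1 ((\nabla \varphi) \cdot w) \|_{\L^p} \leq C \| w \|_{\W^{1 , p} (K_1)}$, again decaying like $\lvert \lambda \rvert^{-1/2}$. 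Assembling the five contributions and composing with the bounded projection $\BP_{p , \Omega}$ gives the asserted $\CR$-bound, uniformly for $\lvert \lambda \rvert \geq \lambda^* \geq 1$.
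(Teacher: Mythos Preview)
Your proof is correct and follows essentially the same route as the paper's: observe $g=0$ for solenoidal data, use the resolvent equations to rewrite $\lambda\CB_1((\nabla\varphi)\cdot w)=\CB_1((\nabla\varphi)\cdot(\Delta w+\nabla\pi^D_\lambda))$, and then control this via the $\W^{-1,p}_0\to\L^p$ mapping property of $\CB_1$ together with the $\lvert\lambda\rvert^{1/2}$- and $\lvert\lambda\rvert^{\alpha}$-decay of $\nabla w$ and $\pi^D_\lambda$. The only cosmetic difference is that the paper splits $(\nabla\varphi)\cdot\Delta w$ and $(\nabla\varphi)\cdot\nabla\pi^D_\lambda$ explicitly into a divergence plus an $\L^p$ remainder and applies $\CB_1$ to each piece, whereas you estimate the $\W^{-1,p}_0$-norm of the combined argument directly via the duality pairing; these are equivalent computations.
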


\begin{proof}
Let $f \in \L^p_\sigma (\Omega)$ and $u^R_\lambda$ and $u^D_\lambda$ be the functions satisfying the equations~\eqref{eq-4.1} and~\eqref{eq-4.2}, respectively.
Since $f^R \in \L^p_{\sigma} (\BR^n)$ the function $g$ in~\eqref{eq-4.1} is zero by Proposition~\ref{prop-2.5}. Thus, we have
\begin{align}
\label{Eq: Relation for Bogovskii}
\lambda u^R_\lambda - \lambda u^D_\lambda = f^R + \Delta u^R_\lambda - (f^D + \Delta u^D_\lambda -\nabla \pi^D_\lambda).
\end{align}
Since $\supp(\nabla \varphi) \cap K_2 = \emptyset$ and $\eta \equiv 1$ on $\supp(\nabla \varphi)$, the definitions of $f^R$ and $f^D$ further yield
\begin{align*}
 (\nabla \varphi) \cdot (f^R - f^D) = (\nabla \varphi) (1 - \eta) f = 0.
\end{align*}
This combined with~\eqref{Eq: Relation for Bogovskii} results in
\begin{align*}
\lambda \CB_1 \big( (\nabla \varphi)  \cdot(u^R_\lambda - u^D_\lambda) \big)
= \CB_1 \big( (\nabla \varphi) \cdot (\Delta u^R_\lambda - \Delta u^D_\lambda) \big)
+ \CB_1 \big( (\nabla \varphi) \cdot (\nabla \pi^D_\lambda) \big).
\end{align*}
From this fact, we rewrite $\BP_{p , \Omega} T_\lambda$ as
\begin{equation*}
\begin{split}
\BP_{p , \Omega} T_\lambda f
& = - 2 \big[ \BP_{p , \Omega} [(\nabla \varphi) \cdot \nabla] (u^R_\lambda - u^D_\lambda)\big] - \big[ \BP_{p , \Omega} (\Delta \varphi) (u^R_\lambda - u^D_\lambda) \big]
+ \big[ \BP_{p , \Omega} \Delta \CB_1 \big((\nabla \varphi) \cdot (u^R_\lambda - u^D_\lambda) \big) \big] \\
& \quad - \big[ \BP_{p , \Omega} \CB_1\big((\nabla \varphi) \cdot (\Delta u^R_\lambda - \Delta u^D_\lambda) \big) \big]
- \big[ \BP_{p , \Omega} \big((\nabla \varphi)\pi^D_\lambda \big)\big] - \big[ \BP_{p , \Omega} \CB_1 \big( (\nabla \varphi) \cdot (\nabla \pi^D_\lambda) \big)  \big] \\
& \equiv T^1_\lambda f + T^2_\lambda f + T^3_\lambda f + T^4_\lambda f + T^5_\lambda f + T^6_\lambda f.
\end{split}
\end{equation*}
By virtue of Proposition~\ref{Prop: Helmholtz on exterior}, Proposition~\ref{prop-2.5}, and Lemma~\ref{lem-3.7} there exists $C > 0$ such that
\begin{align*}
 \CR_{\L^p_{\sigma} (\Omega)} \big\{ \lvert \lambda \rvert^{1 / 2} T_{\lambda}^1 \mid \lambda \in \Sigma_{\theta} , \lvert \lambda \rvert \geq \lambda^* \big\} \leq C.
\end{align*}
Now, use Kahane's contraction principle, see Proposition~\ref{Prop: Kahane}, and the fact that $\alpha < 1 / 2$ to deduce that
\begin{align*}
 \CR_{\L^p_{\sigma} (\Omega)} \big\{ \lvert \lambda \rvert^{\alpha} T_{\lambda}^1 \mid \lambda \in \Sigma_{\theta} , \lvert \lambda \rvert \geq \lambda^* \big\} \leq 2 (\lambda^*)^{\alpha - 1 / 2} C,
\end{align*}
where $C > 0$ is the constant from the previous estimate. Similarly, the operator families $T_{\lambda}^2$ and $T_{\lambda}^5$ are estimated, but relying additionally on Propositions~\ref{prop-3.2} and~\ref{prop-3.6}. To estimate, $T_{\lambda}^3$ use the boundedness of $\CB_1 : \W^{1 , p}_0 (K_1) \to \W^{2 , p}_0 (K_1 ; \BC^n)$ stated in Proposition~\ref{prop-3.4} and proceed as for $T_{\lambda}^1$ and $T_{\lambda}^2$. \par
Finally, we present the estimates for $T_{\lambda}^4$ and remark that $T_{\lambda}^6$ is estimated similarly. Let $N \in \BN$, $\lambda_j \in \Sigma_{\theta}$ with $\lvert \lambda_j \rvert \geq \lambda^*$, and $f_j \in \L^p_{\sigma} (\Omega)$ $(j = 1 , \dots , N)$. Then
\begin{align*}
 (\nabla \varphi) \cdot (\Delta u^R_{\lambda_j} - \Delta u^D_{\lambda_j}) = \dv \Big( \sum_{i = 1}^n \partial_i \varphi \nabla [ (u_{\lambda_j}^R)_i - (u_{\lambda_j}^D)_i ] \Big) - \nabla^2 \varphi : \nabla (u_{\lambda_j}^R - u_{\lambda_j}^D),
\end{align*}
where $A : B = \sum_{i , k = 1}^n A_{i k} B_{i k}$ for two $n \times n$ matrices $A$ and $B$. Consequently, the boundedness of $\BP_{p , \Omega}$ and the boundedness of $\CB_1 : \W^{-1 , p}_0 (K_1) \to \L^p_0 (K_1 ; \BC^n)$ and $\CB_1 : \L^p_0 (K_1) \to \W^{1 , p}_0 (K_1 ; \BC^n)$ yield
\begin{align*}
 \Big\| \sum_{j = 1}^N r_j (\cdot) \lvert \lambda_j \rvert^{1 / 2} T_{\lambda_j}^4 f_j \Big\|_{\L^2(0 , 1 ; \L^p_{\sigma} (\Omega))} \leq C \Big\| \sum_{j = 1}^N r_j (\cdot) f_j \Big\|_{\L^2(0 , 1 ; \L^p_{\sigma} (\Omega))}.
\end{align*}
Again, Kahane's contraction principle together with $\alpha < 1 / 2$ result in the estimate
\begin{align*}
 \CR_{\L^p_{\sigma} (\Omega)} \big\{ \lvert \lambda \rvert^{\alpha} T_{\lambda}^4 \mid \lambda \in \Sigma_{\theta} , \lvert \lambda \rvert \geq \lambda^* \big\} \leq 2 (\lambda^*)^{\alpha - 1 / 2} C. &\qedhere
\end{align*}
\end{proof}

\begin{corr}
\label{Cor: Control for large lambda}
Let $\Omega$ be an exterior domain in $\BR^n$ and $\theta \in (0 , \pi)$. Let $\varepsilon$ and $p$ be subject to Convention~\ref{Conv}. Then there exists $\lambda^* \geq 1$ such that for all $\lambda \in \Sigma_{\theta}$ with $\lvert \lambda \rvert \geq \lambda^*$ the operator
\begin{align*}
 \I + \BP_{p , \Omega} T_{\lambda} : \L^p_{\sigma} (\Omega) \to \L^p_{\sigma} (\Omega)
\end{align*}
is invertible. Moreover, $\lambda^*$ can be chosen such that it holds
\begin{align*}
 \CR_{\L^p_{\sigma} (\Omega)} \big\{ (\I + \BP_{p , \Omega} T_{\lambda})^{-1} \mid \lambda \in \Sigma_{\theta} , \lvert \lambda \rvert \geq \lambda^* \big\} \leq 2.
\end{align*}
\end{corr}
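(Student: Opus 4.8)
The plan is to treat $\I + \BP_{p , \Omega} T_{\lambda}$ as a small perturbation of the identity for large $\lvert \lambda \rvert$ and to invert it by a Neumann series, taking care that the whole construction stays $\CR$-bounded rather than merely bounded. The only input is Lemma~\ref{lem-4.1}: fixing once and for all an exponent $\alpha > 0$ that satisfies~\eqref{alpha} (which is possible because, under Convention~\ref{Conv}, the right-hand sides in~\eqref{alpha} are strictly positive), it provides a constant $C_0 > 0$, \emph{independent of} $\lambda^*$, with
\[
 \CR_{\L^p_\sigma (\Omega)} \big\{ \lvert \lambda \rvert^{\alpha} \BP_{p , \Omega} T_{\lambda} \mid \lambda \in \Sigma_\theta , \lvert \lambda \rvert \geq \lambda^* \big\} \leq C_0 \qquad (\lambda^* \geq 1).
\]

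First I would convert the growth factor $\lvert \lambda \rvert^{\alpha}$ into genuine decay of the $\CR$-bound. Writing $\BP_{p , \Omega} T_{\lambda} = \lvert \lambda \rvert^{-\alpha} ( \lvert \lambda \rvert^{\alpha} \BP_{p , \Omega} T_{\lambda} )$ and using $\lvert \lambda \rvert^{-\alpha} \leq (\lambda^*)^{-\alpha}$ on the relevant part of the sector, Kahane's contraction principle (Proposition~\ref{Prop: Kahane}) absorbs the scalar factors and yields
\[
 \kappa := \CR_{\L^p_\sigma (\Omega)} \big\{ \BP_{p , \Omega} T_{\lambda} \mid \lambda \in \Sigma_\theta , \lvert \lambda \rvert \geq \lambda^* \big\} \leq 2 C_0 (\lambda^*)^{-\alpha}.
\]
Since $\alpha > 0$, I would then fix $\lambda^* \geq 1$ so large that $\kappa \leq 1/2$. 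Because $\CR$-boundedness implies uniform boundedness (Remark~\ref{Rem: R-boundedness implies uniform boundedness}), this forces $\| \BP_{p , \Omega} T_{\lambda} \|_{\CL(\L^p_\sigma (\Omega))} \leq \kappa < 1$ for every $\lambda \in \Sigma_\theta$ with $\lvert \lambda \rvert \geq \lambda^*$, so $\I + \BP_{p , \Omega} T_{\lambda}$ is invertible with $(\I + \BP_{p , \Omega} T_{\lambda})^{-1} = \sum_{k = 0}^{\infty} (-1)^k (\BP_{p , \Omega} T_{\lambda})^k$, the series converging in operator norm.

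The genuinely decisive step --- and the point where uniform boundedness alone would not suffice --- is to pass the $\CR$-bound through the Neumann series. The key estimate is the power bound
\[
 \CR_{\L^p_\sigma (\Omega)} \big\{ (\BP_{p , \Omega} T_{\lambda})^k \mid \lambda \in \Sigma_\theta , \lvert \lambda \rvert \geq \lambda^* \big\} \leq \kappa^k \qquad (k \in \BN),
\]
which I would prove by induction on $k$: given a Rademacher sum $\sum_j r_j (\cdot) (\BP_{p , \Omega} T_{\lambda_j})^k x_j$, set $y_j := (\BP_{p , \Omega} T_{\lambda_j})^{k - 1} x_j$ and apply the definition of $\kappa$ to the operators $\BP_{p , \Omega} T_{\lambda_j}$ and the vectors $y_j$ (crucially, the $\CR$-bound permits each operator in the sum to be an arbitrary member of the family, here chosen with the same index $\lambda_j$), then invoke the inductive hypothesis for the resulting sum in the $y_j$. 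Summing the geometric series of $\CR$-bounds finally gives
\[
 \CR_{\L^p_\sigma (\Omega)} \big\{ (\I + \BP_{p , \Omega} T_{\lambda})^{-1} \mid \lambda \in \Sigma_\theta , \lvert \lambda \rvert \geq \lambda^* \big\} \leq \sum_{k = 0}^{\infty} \kappa^k = \frac{1}{1 - \kappa} \leq 2,
\]
which is the assertion. I expect no further obstacle: everything reduces to the uniform-in-$\lambda^*$ bound of Lemma~\ref{lem-4.1} together with the elementary behaviour of $\CR$-bounds under scalar multiplication, products, and sums.
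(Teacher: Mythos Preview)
Your proof is correct and is exactly the argument the paper has in mind: the paper's own proof is the single sentence ``This follows by Lemma~\ref{lem-4.1} and a Neumann series argument combined with Proposition~\ref{Prop: Kahane},'' and you have faithfully unpacked that sentence, including the use of Kahane's contraction principle to convert the $\lvert \lambda \rvert^{\alpha}$-weighted $\CR$-bound into decay, and the submultiplicativity of $\CR$-bounds under diagonal composition to push the bound through the Neumann series.
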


\begin{proof}
This follows by Lemma~\ref{lem-4.1} and a Neumann series argument combined with Proposition~\ref{Prop: Kahane}.
\end{proof}

\subsection*{Step~3: Continuity and continuation of $T_{\lambda}$ for small $\lambda$}

While it was (in the case of large values of $\lambda$) beneficial to consider the projected operator
\begin{align*}
 \I + \BP_{p , \Omega} T_{\lambda} : \L^p_{\sigma} (\Omega) \to \L^p_{\sigma} (\Omega)
\end{align*}
the question of invertibility for small values of $\lambda$ is resolved for the operator
\begin{align*}
 \I + T_{\lambda} : \L^p(\Omega ; \BC^n) \to \L^p(\Omega ; \BC^n).
\end{align*}
Here, decay properties of $T_{\lambda}$ are not the prevalent feature but essentially the fact that $T_{\lambda}$ is a regularizing and localizing operator. The locality will also be of importance in Step~4, where the injectivity of $\I + T_{\lambda}$ is shown. To obtain estimates for the Stokes resolvent up to $\lambda = 0$, we are going to define an operator $T_0$ as the limit of the operators $T_{\lambda}$ as $\lvert \lambda \rvert \searrow 0$ and unveil some of its properties. As a preparation we prove the following lemma. To this end, for $\lambda , \mu , \lambda_j \in \Sigma_{\theta}$ and $f , f_j \in \L^p (\BR^n ; \BC^n)$ we use the notation
\begin{align*}
 u_{\lambda}^R := (\lambda + A_{p , \BR^n})^{-1} \BP_{p , \BR^n} f, \quad  u_{\lambda_j}^R := (\lambda_j + A_{p , \BR^n})^{-1} \BP_{p , \BR^n} f_j, \quad \text{and} \quad u_{\mu , j}^R := (\mu + A_{p , \BR^n})^{-1} \BP_{p , \BR^n} f_j.
\end{align*}

\begin{lemm}
\label{Lem: Convergences for lambda to zero}
Let $\theta \in (0 , \pi)$, $1 < p < n / 2$, $f \in \L^p (\BR^n ; \BC^n)$, and $\nabla g = (\I - \BP_{p , \BR^n}) f$. There exists $u_0^R \in \W^{2 , p}_{\loc} (\BR^n ; \BC^n)$ with $\nabla^2 u_0^R \in \L^p (\BR^n ; \BC^{n^3})$ such that 
\begin{align*}
\left\{ \begin{aligned}
 - \Delta u_0^R + \nabla g &= f && \text{in } \BR^n \\
 \dv (u_0^R) &= 0 && \text{in } \BR^n,
\end{aligned} \right.
\end{align*}
and such that $u_{\lambda}^R \to u_0^R$ in $\W^{2 , p}_{\loc} (\BR^n ; \BC^n)$ and $\nabla^2 u_{\lambda}^R \to \nabla^2 u_0^R$ in $\L^p (\BR^n ; \BC^{n^3})$ as $\lambda \to 0$ with $\lambda \in \Sigma_{\theta}$. \par
Furthermore, there exist constants $0 < \alpha , \beta < 1$ such that for each open and bounded set $O \subset \BR^n$ there exists a constant $C > 0$ such that for all $N \in \BN$, $f_j \in \L^p (\BR^n ; \BC^n)$, $\lambda^* > 0$, and $\lambda_j , \mu \in B_{\lambda^*} (0) \cap \Sigma_{\theta}$ $(j = 1 , \dots , N)$ it holds
\begin{align}
 \Big\| \sum_{j = 1}^N r_j (\cdot) ( u_{\lambda_j}^R  - u_{\mu , j}^R) \Big\|_{\L^2 (0 , 1 ; \W^{1 , p} (O ; \BC^n))} &\leq C \max\{\lambda^* , 1\}^{\beta} \max_{1 \leq i \leq N} \min \big\{ \lvert \lambda_i \rvert^{\alpha - 1} \lvert \lambda_i - \mu \rvert , \lvert \mu \rvert^{\alpha - 1} \lvert \mu - \lambda_i \rvert \big\} \notag \\
 &\qquad \cdot \Big\| \sum_{j = 1}^N r_j (\cdot) f_j \Big\|_{\L^2 (0 , 1 ; \L^p (\BR^n ; \BC^n))}. \label{Eq: Quantified approximation whole space, general} 
\end{align}
In particular, if $u_{0 , j}^R$ denotes the limit obtained above but with datum $f_j$, then for each open and bounded set $O \subset \BR^n$ there exists a constant $C > 0$ such that for all $N \in \BN$, $f_j \in \L^p (\BR^n ; \BC^n)$, $\delta > 0$, and $\lambda_j \in B_{\delta} (0) \cap \Sigma_{\theta}$ $(j = 1 , \dots , N)$ it holds
\begin{align}
\label{Eq: Quantified approximation whole space, origin}
 \Big\| \sum_{j = 1}^N r_j (\cdot) ( u_{\lambda_j}^R  - u_{0 , j}^R) \Big\|_{\L^2 (0 , 1 ; \W^{1 , p} (O ; \BC^n))} \leq C \max\{ \delta^{\beta} , 1\} \delta^{\alpha} \Big\| \sum_{j = 1}^N r_j (\cdot) f_j \Big\|_{\L^2 (0 , 1 ; \L^p (\BR^n ; \BC^n))}.
\end{align}
\end{lemm}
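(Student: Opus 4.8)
The plan is to reduce everything to the whole-space resolvent $u_\lambda^R = (\lambda + A_{p , \BR^n})^{-1} \BP_{p , \BR^n} f$ and to exploit the $\CR$-bounded $\L^p$-$\L^q$ estimates of Lemma~\ref{Lem: R-bounded L^p-L^q estimates whole space} together with the resolvent identity. Define $p^{**}$ by $1/p^{**} = 1/p - 2/n$, which is finite precisely because $p < n/2$; this is the endpoint $\sigma = 1$ of Lemma~\ref{Lem: R-bounded L^p-L^q estimates whole space}, so that $\CR_{\L^p (\BR^n ; \BC^n) \to \L^{p^{**}} (\BR^n)} \{ (\mu + A_{p , \BR^n})^{-1} \BP_{p , \BR^n} \mid \mu \in \Sigma_\theta \} \leq C$ and in particular $u_{\mu , j}^R$ stays uniformly bounded in $\L^{p^{**}}$ as $\mu \to 0$. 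To construct $u_0^R$ I would apply the Fourier multiplier $|\xi|^{-2}$ to $\BP_{p , \BR^n} f$ (the Riesz potential of order two): the Mihlin theorem shows that $\xi_k \xi_l / |\xi|^2$ is an $\L^p$-multiplier, whence $\nabla^2 u_0^R \in \L^p (\BR^n ; \BC^{n^3})$, while Hardy--Littlewood--Sobolev yields $u_0^R \in \L^{p^{**}} (\BR^n ; \BC^n)$ and $\nabla u_0^R \in \L^{p^*} (\BR^n)$ with $1/p^* = 1/p - 1/n$; hence $u_0^R \in \W^{2 , p}_{\loc}$. That it solves the stationary system is immediate from the multiplier identities and $\nabla g = (\I - \BP_{p , \BR^n}) f$.

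For the convergence, the second-order part is governed by the multiplier difference $\lambda\, \xi_k \xi_l \big( |\xi|^2 (\lambda + |\xi|^2) \big)^{-1}$, a family of uniformly bounded Mihlin multipliers that converges pointwise to zero; a density argument (first for data with Fourier support away from the origin, then by uniform boundedness) gives $\nabla^2 u_\lambda^R \to \nabla^2 u_0^R$ in $\L^p (\BR^n)$ as $\lambda \to 0$. The convergence in $\W^{2 , p}_{\loc}$ then follows by combining this with control of the zeroth- and first-order parts on bounded sets, which is exactly the quantified estimate below specialised to $N = 1$ and $\mu \to 0$.

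The technical heart is~\eqref{Eq: Quantified approximation whole space, general}. I would start from the resolvent identity in its two symmetric forms
\[
 u_{\lambda_j}^R - u_{\mu , j}^R = (\mu - \lambda_j)(\lambda_j + A_{p , \BR^n})^{-1} u_{\mu , j}^R = (\lambda_j - \mu)(\mu + A_{p , \BR^n})^{-1} u_{\lambda_j}^R,
\]
the two choices producing the minimum in the claimed bound. Since $O$ is bounded one has $\L^q (O) \hookrightarrow \L^p (O)$, so it suffices to estimate the $\W^{1 , q} (\BR^n)$-norm for a single exponent $q$ chosen just above $p^{**}$, and I set $\alpha := \tau := n (1/p^{**} - 1/q)/2 \in (0 , 1/2)$. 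Working with the first form, insert the weight $|\lambda_j|^{\tau - 1}$ and apply the $\CR$-bounded family $\{ |\lambda|^{1 - \tau}(\lambda + A_{p , \BR^n})^{-1} \BP_{p , \BR^n} \}$ from $\L^{p^{**}}$ to $\L^q$ (Lemma~\ref{Lem: R-bounded L^p-L^q estimates whole space}, valid for all $1 < p^{**} \leq q$ with no dimensional restriction) to the zeroth-order part, and the analogous $\CR$-bounded gradient family with weight $|\lambda|^{1/2 - \tau}$ to the first-order part; the factor $u_{\mu , j}^R$ is absorbed by the uniform $\L^{p^{**}}$-bound above. Kahane's contraction principle (Proposition~\ref{Prop: Kahane}) then pulls the scalars $(\mu - \lambda_j)|\lambda_j|^{\tau - 1}$ out as $\max_i |\mu - \lambda_i| |\lambda_i|^{\alpha - 1}$. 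Because the gradient part carries the extra factor $|\lambda_j|^{1/2} \leq \max\{\lambda^* , 1\}^{1/2}$ on $B_{\lambda^*}(0) \cap \Sigma_\theta$, this is exactly the origin of the prefactor $\max\{\lambda^* , 1\}^\beta$ with $\beta = 1/2$. To produce the minimum, I would split $\{1 , \dots , N\}$ into the indices where each form is the smaller and handle the two groups separately, using Proposition~\ref{Prop: Kahane} to discard the complementary Rademacher coefficients. Finally~\eqref{Eq: Quantified approximation whole space, origin} follows by letting $\mu \to 0$: the term $|\mu|^{\alpha - 1}$ inside the minimum blows up, so only the first form survives, $\min$ collapses to $|\lambda_i|^{\alpha - 1}|\lambda_i| = |\lambda_i|^\alpha \leq \delta^\alpha$, and $u_{\mu , j}^R \to u_{0 , j}^R$ by the convergence just established.

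The one ingredient not recorded verbatim is the \emph{$\CR$-bounded} gradient $\L^{p^{**}}$-$\L^q$ estimate, since Corollary~\ref{Cor: Lp-Lq gradient estimates whole space} is stated only pointwise in $\lambda$; I expect this to be the main obstacle. The plan is to obtain it by exactly the Stein--Voigt interpolation scheme of Lemma~\ref{Lem: R-bounded L^p-L^q estimates whole space} (Proposition~\ref{Prop: Stein interpolation}), interpolating the $\CR$-bounded family $\{ |\lambda|^{1/2} \nabla (\lambda + A_{p , \BR^n})^{-1} \BP_{p , \BR^n} \}$ on $\L^{p^{**}}$ from Proposition~\ref{prop-2.5} against a higher-integrability endpoint for the same operator $\nabla(\lambda + A_{p , \BR^n})^{-1} \BP_{p , \BR^n}$, which yields the weight $|\lambda|^{1/2 - \tau}$ for $q$ in a neighbourhood of $p^{**}$. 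The remaining delicate point is purely bookkeeping: verifying that the chosen $q$ keeps $\tau < 1/2$ and all exponents inside the admissible ranges ($\sigma \leq 1$ for the resolvent, $\sigma < 1/2$ for the gradient), which is guaranteed by taking $q$ sufficiently close to $p^{**}$, and checking that the gradient weight $|\lambda_j|^{\alpha - 1/2}$ is everywhere dominated by $\max\{\lambda^* , 1\}^{1/2} |\lambda_j|^{\alpha - 1}$ on the relevant sector.
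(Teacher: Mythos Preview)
Your approach is correct and arrives at the same conclusion, but it distributes the integrability gain differently from the paper and therefore needs one auxiliary $\CR$-bound that the paper avoids.

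The paper writes $u_{\lambda_j}^R - u_{\mu , j}^R = (\mu - \lambda_j)(\mu + A_{p , \BR^n})^{-1} u_{\lambda_j}^R$, then on the bounded set $O$ uses H\"older to pass from $\L^p(O)$ to $\L^q(O)$ for a suitable $q > p^*$ (respectively $q > p^{**}$ for the zeroth order), and Sobolev's inequality to \emph{raise} the derivative order by one: $\|\nabla v\|_{\L^q(\BR^n)} \leq C\|\nabla^2 v\|_{\L^{q_*}(\BR^n)}$ with $q_* = qn/(q+n)$. This reduces everything to the $\CR$-bounded $\nabla^2$-estimate of Proposition~\ref{prop-2.5} at the fixed integrability $q_*$, followed by the $\CR$-bounded $\L^p$-$\L^{q_*}$ resolvent estimate of Lemma~\ref{Lem: R-bounded L^p-L^q estimates whole space} applied to the \emph{inner} resolvent $u_{\lambda_j}^R$. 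No gradient $\L^p$-$\L^q$ estimate is ever needed.

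You instead put the full $\sigma = 1$ jump $\L^p \to \L^{p^{**}}$ on the inner resolvent (this is fine, since $\mu$ is fixed and $(\mu+A)^{-1}\BP$ is a single operator), and a small jump $\L^{p^{**}} \to \L^q$ on the outer resolvent, for both the zeroth and first order. The price is the $\CR$-bounded gradient family $\{|\lambda|^{1/2-\tau}\nabla(\lambda+A_{p,\BR^n})^{-1}\BP_{p,\BR^n}\}$ from $\L^{p^{**}}$ to $\L^q$, which, as you note, is not recorded in the paper. Your sketch for obtaining it via Stein--Voigt interpolation is sound: the endpoint at $\tau=0$ is Proposition~\ref{prop-2.5}, and the endpoint at $\tau=1/2$ (i.e.\ $1/p^{**}-1/q=1/n$) follows from Sobolev $\|\nabla v\|_{\L^q}\le C\|\nabla^2 v\|_{\L^{p^{**}}}$ combined with the $\CR$-bounded $\nabla^2$-estimate of Proposition~\ref{prop-2.5}. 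So the gap is fillable, and your identification $\beta=1/2$ is correct. The paper's route is simply more economical: by passing to $\nabla^2$ it uses only ingredients already on the shelf. Your explicit construction of $u_0^R$ via the Riesz potential and the multiplier argument for $\nabla^2 u_\lambda^R \to \nabla^2 u_0^R$ are fine; the paper obtains both by invoking sectoriality (the Cauchy property of $A_{p,\BR^n}(\lambda+A_{p,\BR^n})^{-1}\BP_{p,\BR^n}f$ as $\lambda\to 0$).
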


\begin{proof}
Let $O \subset \BR^n$ be open and bounded. Let $\lambda^* > 0$, $N \in \BN$, $f_j \in \L^p (\BR^n ; \BC^n)$, and $\lambda_j , \mu \in \Sigma_{\theta}$ with $\lvert \mu \rvert , \lvert \lambda_j \rvert < \lambda^*$ $(j = 1 , \dots , N)$. Let $n p / (n - p) = p^* < q < \infty$ with $1 / p - 1 / q \leq 3 / n$. Denote by $q_* := q n / (q + n)$ and notice that $\W^{1 , q_*} (\BR^n) \subset \L^q (\BR^n)$. An application of H\"older's inequality followed by Sobolev's inequality together with the resolvent identity implies for almost every $t \in (0 , 1)$
\begin{align*}
 \Big\| \sum_{j = 1}^N r_j (t) \nabla \big( u_{\lambda_j}^R  - u_{\mu , j}^R \big) \Big\|_{\L^p (O ; \BC^{n^2})} \leq C \Big\| \sum_{j = 1}^N r_j (t) \nabla^2 (\mu + A_{p , \BR^n})^{-1} (\mu - \lambda_j) u_{\lambda_j}^R \Big\|_{\L^{q_*} (\BR^n ; \BC^{n^3})}.
\end{align*}
Now, Proposition~\ref{prop-2.5} ensures
\begin{align*}
 \Big\| \sum_{j = 1}^N r_j (\cdot) \nabla^2 (\mu + A_{p , \BR^n})^{-1} (\mu - \lambda_j) u_{\lambda_j}^R \Big\|_{\L^2 (0 , 1 ; \L^{q_*} (\BR^n ; \BC^{n^3}))} \leq C \Big\| \sum_{j = 1}^N r_j (\cdot) (\mu - \lambda_j) u_{\lambda_j}^R \Big\|_{\L^2 (0 , 1 ; \L^{q_*} (\BR^n ; \BC^n))}.
\end{align*}
Notice that $q_* > p$ and that $1 / p - 1 / {q_*} \leq 2 / n$. Thus, Lemma~\ref{Lem: R-bounded L^p-L^q estimates whole space} implies with $\sigma := n (1 / p - 1 / {q_*})/2$
\begin{align*}
 \Big\| \sum_{j = 1}^N r_j (\cdot) (\mu - \lambda_j) u_{\mu , j}^R \Big\|_{\L^2 (0 , 1 ; \L^{q_*} (\BR^n ; \BC^n))} \leq C \Big\| \sum_{j = 1}^N r_j (\cdot) \lvert \lambda_j \rvert^{\sigma - 1} (\mu - \lambda_j) f_j \Big\|_{\L^2 (0 , 1 ; \L^p (\BR^n ; \BC^n))}.
\end{align*}
Summarizing the previous estimates followed by Kahane's contraction principle delivers
\begin{align}
\label{Eq: Quantified approximation gradient}
 \Big\| \sum_{j = 1}^N r_j (\cdot) \nabla \big( u_{\lambda_j}^R  - u_{\mu , j}^R \big) \Big\|_{\L^2(0 , 1 ; \L^p (O ; \BC^{n^2}))} \leq C \sup_{1 \leq i \leq N} \lvert \lambda_i \rvert^{\sigma - 1} \lvert \lambda_i - \mu \rvert \Big\| \sum_{j = 1}^N r_j (\cdot) f_j \Big\|_{\L^2 (0 , 1 ; \L^p (\BR^n ; \BC^n))}.
\end{align}
Next, for any $n p / (n - 2 p) = p^{**} < q < \infty$ with $1 / p - 1 / q \leq 4 / n$ and $q_{**} := q n / (n + 2 q)$ define $\nu := n (1 / p - 1 / {q_{**}}) / 2$. Then analogously as above one finds
\begin{align}
\label{Eq: Quantified approximation}
 \Big\| \sum_{j = 1}^N r_j (\cdot) \big( u_{\lambda_j}^R  - u_{\mu , j}^R \big) \Big\|_{\L^2(0 , 1 ; \L^p (O ; \BC^{n}))} \leq C \sup_{1 \leq i \leq N} \lvert \lambda_i \rvert^{\nu - 1} \lvert \lambda_i - \mu \rvert \Big\| \sum_{j = 1}^N r_j (\cdot) f_j \Big\|_{\L^2 (0 , 1 ; \L^p (\BR^n ; \BC^n))}.
\end{align}
Notice that $\nu , \sigma > 0$ and that out of symmetry reasons~\eqref{Eq: Quantified approximation gradient} and~\eqref{Eq: Quantified approximation} hold with the symbols $\lambda_j$ and $\mu$ interchanged. This gives~\eqref{Eq: Quantified approximation whole space, general}. Furthermore, if $N = 1$, this gives the $\W_{\loc}^{1 , p}$-convergence properties stated in the lemma. The convergence of $( \nabla^2 u_{\lambda}^R)_{\lambda \in \Sigma_{\theta}}$ follows since the sectoriality of the Stokes operator $A_{p , \BR^n}$, Proposition~\ref{prop-2.5}, implies the Cauchy property in $\L^p (\BR^n ; \BC^{n^3})$ as $\lambda \to 0$, see~\cite[Prop.~2.1.1]{Haase}. All convergences being known, let $\mu \to 0$ in~\eqref{Eq: Quantified approximation gradient} and~\eqref{Eq: Quantified approximation}. This delivers~\eqref{Eq: Quantified approximation whole space, origin}.
\end{proof}

A similar lemma holds on bounded Lipschitz domains. As above, for $\lambda , \mu , \lambda_j \in \Sigma_{\theta}$ and $f , f_j \in \L^p (\BR^n ; \BC^n)$ we write
\begin{align*}
 u_{\lambda}^D := (\lambda + A_{p , D})^{-1} \BP_{p , D} f, \quad  u_{\lambda_j}^D := (\lambda_j + A_{p , D})^{-1} \BP_{p , D} f_j, \quad \text{and} \quad u_{\mu , j}^D := (\mu + A_{p , D})^{-1} \BP_{p , D} f_j
\end{align*}
and denote the associated pressures by $\pi_{\lambda}^D$, $\pi_{\lambda_j}^D$, and $\pi_{\mu , j}^D$.

\begin{lemm}
\label{Lem: Convergence for lambda to zero Lipschitz domain}
Let $\theta \in (0 , \pi)$, $\varepsilon$ and $p$ be subject to Convention~\ref{Conv}, and $f \in \L^p (D ; \BC^n)$. There exists $u_0^D \in \CD(A_{p , D})$ with associated pressure $\pi_0^D \in \L^p_0 (D)$ such that 
\begin{align*}
\left\{ \begin{aligned}
 - \Delta u_0^D + \nabla \pi_0^D &= f && \text{in } D \\
 \dv (u_0^D) &= 0 && \text{in } D \\
 u_0^D &= 0 && \text{on } \partial D
\end{aligned} \right.
\end{align*}
and such that $u_{\lambda}^D \to u_0^D$ in $\W^{1 , p}_{0 , \sigma} (D)$ and in $\W^{2 , p}_{\loc} (D ; \BC^n)$ as $\lambda \to 0$. Furthermore, it holds $\pi_{\lambda}^D \to \pi_0^D$ in $\L^p_0 (D)$ and in $\W^{1 , p}_{\loc} (D)$ as $\lambda \to 0$ with $\lambda \in \Sigma_{\theta}$.\par
Furthermore, there exists $C > 0$ such that for all $N \in \BN$, $f_j \in \L^p (D ; \BC^n)$, and $\lambda_j , \mu \in \Sigma_{\theta} \cup \{ 0 \}$ $(j = 1 , \dots , N)$ it holds
\begin{align}
  \Big\| \sum_{j = 1}^N r_j (\cdot) (u_{\lambda_j}^D - u_{\mu , j}^D) \Big\|_{\L^2 (0 , 1 ; \W^{1 , p}_{0 , \sigma} (D))} \leq C \max_{1 \leq i \leq N} \lvert \mu - \lambda_i \rvert \cdot \Big\| \sum_{j = 1}^N r_j (\cdot) f_j \Big\|_{\L^2 (0 , 1 ; \L^p (D ; \BC^n))} \label{Eq: Quantified approximation Lipschitz domain, velocity} 
\end{align}
and
\begin{align}
\label{Eq: Quantified approximation Lipschitz domain, pressure}
 \Big\| \sum_{j = 1}^N r_j (t) (\pi_{\lambda_j}^D - \pi_{\mu , j}^D) \Big\|_{\L^p_0 (D)} \leq C \max_{1 \leq i \leq N} \lvert \mu - \lambda_i \rvert \cdot \Big\| \sum_{j = 1}^N r_j (\cdot) f_j \Big\|_{\L^2 (0 , 1 ; \L^p (D ; \BC^n))}. 
\end{align}
\end{lemm}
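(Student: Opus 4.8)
The crucial structural difference from the whole-space Lemma~\ref{Lem: Convergences for lambda to zero} is that on a bounded Lipschitz domain the Stokes operator $A_{p , D}$ is \emph{invertible}: since $\Sigma_\theta \subset \rho(-A_{p , D})$ by Proposition~\ref{prop-3.2} and any stationary solution with $A_{p , D} u = 0$ must vanish (testing the homogeneous problem against $u$ gives $\nabla u = 0$, hence $u = 0$ by the boundary condition), one has $0 \in \rho(-A_{p , D})$. Accordingly I would simply define $u_0^D := A_{p , D}^{-1} \BP_{p , D} f \in \CD(A_{p , D})$ with associated pressure $\pi_0^D \in \L^p_0 (D)$ from the definition of $\CD(A_{p , D})$; this pair solves the stationary system. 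The $\W^{1 , p}_{0 , \sigma}$-convergence follows from the resolvent identity $u_\lambda^D - u_0^D = -\lambda (\lambda + A_{p , D})^{-1} u_0^D$ together with the fact that $\lambda(\lambda + A_{p , D})^{-1} \to 0$ strongly as $\lambda \to 0$ (a consequence of $0 \in \rho(-A_{p , D})$), applied in $\CD(A_{p , D}^{1/2}) = \W^{1 , p}_{0 , \sigma}(D)$. The $\W^{2 , p}_{\loc}$- and $\W^{1 , p}_{\loc}$-convergences are then deduced from this global convergence via the interior regularity estimate of Galdi~\cite[Thm.~IV.4.1]{G} applied to the difference, while $\pi_\lambda^D \to \pi_0^D$ in $\L^p_0(D)$ drops out of the quantitative estimate~\eqref{Eq: Quantified approximation Lipschitz domain, pressure} upon letting $\mu \to 0$.

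For the velocity estimate~\eqref{Eq: Quantified approximation Lipschitz domain, velocity} the key move is to write the resolvent identity as $w_j := u_{\lambda_j}^D - u_{\mu , j}^D = (\lambda_j + A_{p , D})^{-1}\BP_{p , D} h_j$ with $h_j := (\mu - \lambda_j)(\mu + A_{p , D})^{-1}\BP_{p , D} f_j \in \L^p_\sigma(D)$. Since $\|\nabla w\|_{\L^p} \simeq \|w\|_{\W^{1 , p}_{0 , \sigma}}$ by Poincar\'e, it suffices to bound $\big\|\sum_j r_j(\cdot)\nabla w_j\big\|_{\L^2(0 , 1 ; \L^p(D))}$. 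Estimate~\eqref{3.5} with $\beta = 0$ (extended to include $\lambda = 0$ by adjoining the single bounded operator $\nabla A_{p , D}^{-1}\BP_{p , D}$) controls this by $C\big\|\sum_j r_j(\cdot) h_j\big\|_{\L^2(0 , 1 ; \L^p(D))}$. As $\mu$ is fixed, $\sum_j r_j(\cdot) h_j = (\mu + A_{p , D})^{-1}\BP_{p , D}\sum_j r_j(\cdot)(\mu - \lambda_j) f_j$, where $(\mu + A_{p , D})^{-1}\BP_{p , D}$ is bounded uniformly in $\mu \in \Sigma_\theta \cup \{0\}$ (sectoriality for large $|\mu|$, invertibility for small $|\mu|$); Kahane's contraction principle, Proposition~\ref{Prop: Kahane}, then absorbs the scalars $(\mu - \lambda_j)$ at the cost of $2\max_i |\mu - \lambda_i|$, yielding~\eqref{Eq: Quantified approximation Lipschitz domain, velocity}.

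For the pressure estimate~\eqref{Eq: Quantified approximation Lipschitz domain, pressure} I would follow the duality scheme of the proof of Proposition~\ref{prop-3.6}. Setting $p_j := \pi_{\lambda_j}^D - \pi_{\mu , j}^D$ and noting that the difference pair $(w_j , p_j)$ solves $-\Delta w_j + \nabla p_j = A_{p , D} w_j$ with the \emph{solenoidal} datum $A_{p , D} w_j = \mu u_{\mu , j}^D - \lambda_j u_{\lambda_j}^D$, I test against $\dv(\CB g)$ for $g \in \L^{p'}_0(D)$ with $\|g\|_{\L^{p'}} \le 1$; integration by parts turns $\big\|\sum_j r_j(t) p_j\big\|_{\L^p_0(D)}$ into the two contributions $\big|\int_D \langle \sum_j r_j(t)\nabla w_j , \nabla \CB g\rangle\big|$ and $\big|\int_D \langle \sum_j r_j(t) A_{p , D} w_j , \CB g\rangle\big|$, using $\|\CB g\|_{\W^{1 , p'}_0} \le C$ from Proposition~\ref{prop-3.4} (no fractional powers are needed since $A_{p , D} w_j$ already lies in $\L^p_\sigma$ and is paired directly with $\CB g \in \L^{p'}$). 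Taking $\L^2(0 , 1)$-norms in $t$, the first contribution is the velocity gradient term bounded above, while for the second one writes $A_{p , D} w_j = A_{p , D}(\lambda_j + A_{p , D})^{-1}\BP_{p , D} h_j$ and invokes the $\CR$-boundedness of $\{A_{p , D}(\lambda + A_{p , D})^{-1}\BP_{p , D}\}_{\lambda \in \Sigma_\theta \cup \{0\}}$, which follows from $A_{p , D}(\lambda + A_{p , D})^{-1}\BP_{p , D} = \BP_{p , D} - \lambda(\lambda + A_{p , D})^{-1}\BP_{p , D}$ and Proposition~\ref{prop-3.2}, again combined with the formula for $h_j$ and Kahane's principle. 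Both terms are thereby bounded by $C\max_i |\mu - \lambda_i|\,\big\|\sum_j r_j(\cdot) f_j\big\|_{\L^2(0 , 1 ; \L^p(D))}$.

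The genuinely delicate point is this pressure estimate: the Lipschitz regularity precludes global $\W^{2 , p}$-bounds on $u_\lambda^D$ and hence forbids estimating $\nabla p_j = A_{p , D} w_j + \Delta w_j$ directly, and it is precisely the Bogovski\u{\i} duality of Proposition~\ref{prop-3.6} that circumvents this by trading the missing second derivative for the globally available first-order and $A_{p , D}$-quantities. In contrast to the whole-space case, no change of integrability exponent and no interpolation of fractional powers are needed here, because invertibility of $A_{p , D}$ keeps every resolvent factor uniformly bounded up to $\lambda = 0$; this is what collapses the weights $\lvert\lambda\rvert^{\alpha}$ and the two distinct exponents of Lemma~\ref{Lem: Convergences for lambda to zero} into the single clean factor $\max_i|\mu - \lambda_i|$.
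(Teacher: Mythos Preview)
Your proposal is correct and follows essentially the same route as the paper: both exploit the invertibility of $A_{p,D}$, the resolvent identity, the $\CR$-bounds of Proposition~\ref{prop-3.2}/Lemma~\ref{lem-3.7}, Kahane's contraction principle, and the Bogovski\u{\i} duality of Proposition~\ref{prop-3.6} for the pressure, with interior regularity from~\cite[Thm.~IV.4.1]{G} giving the local higher-order convergences. The only cosmetic differences are that you define $u_0^D := A_{p,D}^{-1}\BP_{p,D} f$ directly while the paper obtains it as a limit via the Cauchy property and closedness of $A_{p,D}$, and that you bound $\|\nabla w_j\|_{\L^p}$ via~\eqref{3.5} with $\beta=0$ whereas the paper uses the continuous embedding $\CD(A_{p,D}) \hookrightarrow \W^{1,p}_{0,\sigma}(D)$ to reduce to $\|A_{p,D} w_j\|_{\L^p_\sigma}$.
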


\begin{proof}
Let $N \in \BN$, $f_j \in \L^p (D ; \BC^n)$, and $\lambda_j , \mu \in \Sigma_{\theta}$. The invertibility of $A_{p , D}$ together with the continuous embedding $\CD(A_{p , D}) \subset \W^{1 , p}_{0 , \sigma} (D)$ and the resolvent identity implies 
\begin{align*}
 \Big\| \sum_{j = 1}^N r_j (\cdot) (u_{\lambda_j}^D - u_{\mu , j}^D) \Big\|_{\L^2 (0 , 1 ; \W^{1 , p}_{0 , \sigma} (D))} \leq C \Big\| \sum_{j = 1}^N r_j (\cdot) (\mu - \lambda_j) A_{p , D} (\mu + A_{p , D})^{-1} A_{p , D} u_{\lambda_j}^D \Big\|_{\L^2 (0 , 1 ; \L^p_{\sigma} (D))}.
\end{align*}
An application of Proposition~\ref{prop-3.2} followed by Kahane's contraction principle then yields~\eqref{Eq: Quantified approximation Lipschitz domain, velocity}. \par
Concerning the pressure, use that $A_{p , D} u_{\lambda_j}^D = - \Delta u_{\lambda_j}^D + \nabla \pi_{\lambda_j}^D$ holds in the sense of distributions (the same holds for $\mu$) and estimate by using Bogovski\u{\i}'s operator $\CB$ on $D$ and the resolvent identity for almost every $t \in (0 , 1)$
\begin{align*}
 \Big\| \sum_{j = 1}^N r_j (t) (\pi_{\lambda_j}^D - \pi_{\mu , j}^D) \Big\|_{\L^p_0 (D)} &= \sup_{\substack{h \in \L^{p^{\prime}}_0 (D) \\ \| h \|_{\L^{p^{\prime}} (D)} \leq 1}} \Big\lvert \int_{D} \sum_{j = 1}^N r_j (t) (\pi_{\lambda_j}^D - \pi_{\mu , j}^D) \overline{\dv (\CB h)} \dx \Big\rvert \\
 &\leq \sup_{\substack{h \in \L^{p^{\prime}}_0 (D) \\ \| h \|_{\L^{p^{\prime}} (D)} \leq 1}} \Big\lvert \int_{D} \sum_{j = 1}^N r_j (t) (\mu - \lambda_j) A_{p , D} (\mu + A_{p , D})^{-1} u_{\lambda_j}^D \cdot \overline{\CB h} \dx \Big\rvert \\
 &\qquad + \sup_{\substack{h \in \L^{p^{\prime}}_0 (D) \\ \| h \|_{\L^{p^{\prime}} (D)} \leq 1}} \Big\lvert \int_{D} \sum_{j = 1}^N r_j (t) \nabla (u_{\lambda_j}^D - u_{\mu , j}^D) \cdot \overline{\nabla \CB h} \dx \Big\rvert.
\end{align*}
As a consequence, the boundedness of $\CB : \L^{p^{\prime}}_0 (D) \to \W^{1 , p^{\prime}}_0 (D ; \BC^n)$ together with~\eqref{Eq: Quantified approximation Lipschitz domain, velocity}, Proposition~\ref{prop-3.2}, the invertibility of $A_{p , D}$, and Kahane's contraction principle yields~\eqref{Eq: Quantified approximation Lipschitz domain, pressure}. \par
If $N = 1$,~\eqref{Eq: Quantified approximation Lipschitz domain, velocity} and~\eqref{Eq: Quantified approximation Lipschitz domain, pressure} show that $(u_{\lambda})_{\lambda \in \Sigma_{\theta}}$ is a Cauchy sequence in $\W^{1 , p}_{0 , \sigma} (D)$ and that $(\pi_{\lambda}^D)_{\lambda \in \Sigma_{\theta}}$ is a Cauchy sequence in $\L^p_0 (D)$ as $\lvert \lambda \rvert \to 0$. Moreover, the sectoriality of the Stokes operator $A_{p , D}$, see Proposition~\ref{prop-3.2}, implies that $(A_{p , D} u_{\lambda}^D)_{\lambda \in \Sigma_{\theta}}$ is a Cauchy sequence in $\L^p_{\sigma} (D)$ as well~\cite[Prop.~2.1.1]{Haase}. The closedness of $A_{p , D}$ implies that the limit $u_0^D$ is an element of $\CD(A_{p , D})$. Finally, inner regularity estimates, see~\cite[Thm.~IV.4.1]{G}, imply the convergence of $(u_{\lambda}^D)_{\lambda \in \Sigma_{\theta}}$ to $u_0^D$ in $\W^{2 , q}_{\loc} (D ; \BC^n)$ and of $(\pi_{\lambda}^D)_{\lambda \in \Sigma_{\theta}}$ to $\pi_0^D$ in $\W^{1 , q}_{\loc} (D)$.
\end{proof}

\begin{prop}
\label{Prop: The operator T_0}
Let $\Omega$ be an exterior Lipschitz domain in $\BR^n$ and $\theta \in (0 , \pi)$. Let $\varepsilon$ and $p$ be subject to Convention~\ref{Conv} with $p < n / 2$. Then there exists a compact operator $T_0 \in \CL(\L^p (\Omega ; \BC^n))$ that satisfies $\supp (T_0 f) \subset \overline{K_1}$ for all $f \in \L^p (\Omega ; \BC^n)$ and for all $\gamma > 0$ and $\mu \in \Sigma_{\theta} \cup \{ 0 \}$ there exists $\delta > 0$ such that
\begin{align}
\label{Eq: Definition of T_0}
 \CR_{\L^p (\Omega ; \BC^n)} \big\{ T_{\lambda} - T_{\mu} \mid \lambda \in \Sigma_{\theta} \cap B_{\delta} (\mu) \big\} \leq \gamma.
\end{align}
Moreover, it holds $T_0 \in \CL (\L^p (\Omega ; \BC^n) , \W^{1 , p} (\Omega ; \BC^n))$ and $T_0$ is consistent in the $\L^p$-scale, \textit{i.e.}, if $f \in \L^{p_1} (\Omega ; \BC^n) \cap \L^{p_2} (\Omega ; \BC^n)$ with $p_1 , p_2 < n / 2$ subject to Convention~\ref{Conv}, then the limits $\lim_{\lvert \lambda \rvert \to 0 , \lambda \in \Sigma_{\theta}} T_{\lambda} f$ taken with respect to $\L^{p_1} (\Omega ; \BC^n)$ and $\L^{p_2} (\Omega ; \BC^n)$ coincide.
\end{prop}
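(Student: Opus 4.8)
The plan is to define $T_0$ by literally setting $\lambda = 0$ in~\eqref{4.6}, replacing $u^R_\lambda$, $u^D_\lambda$, and $\pi^D_\lambda$ by the limit objects $u^R_0$, $u^D_0$, and $\pi^D_0$ provided by Lemmas~\ref{Lem: Convergences for lambda to zero} and~\ref{Lem: Convergence for lambda to zero Lipschitz domain}; the term $-\lambda \CB_1(\,\cdot\,)$ simply drops out, so that
\[
 T_0 f := -2[(\nabla \varphi) \cdot \nabla] (u^R_0 - u^D_0) - (\Delta \varphi)(u^R_0 - u^D_0) + (\nabla \varphi)(g - \pi^D_0) + \Delta \CB_1 \big( (\nabla \varphi) \cdot (u^R_0 - u^D_0) \big).
\]
Since $\nabla \varphi$ and $\Delta \varphi$ are supported in $\supp(\nabla \varphi) \subset K_1$ and $\CB_1$ produces functions supported in $\overline{K_1}$, one reads off $\supp(T_0 f) \subset \overline{K_1}$. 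The support of $\nabla \varphi$ lies in the interior of $D$ and away from $\partial \Omega$, so inner regularity (see~\cite[Thm.~IV.4.1]{G}), as encoded in Lemmas~\ref{Lem: Convergences for lambda to zero} and~\ref{Lem: Convergence for lambda to zero Lipschitz domain}, gives $u^R_0 , u^D_0 \in \W^{2 , p}$ and $g , \pi^D_0 \in \W^{1 , p}$ on a neighbourhood of that support; together with the mapping properties of $\CB_1$ from Proposition~\ref{prop-3.4} this shows that every summand lies in $\W^{1 , p}(\Omega ; \BC^n)$ and depends boundedly on $f$, whence $T_0 \in \CL(\L^p(\Omega ; \BC^n) , \W^{1 , p}(\Omega ; \BC^n))$.

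Next I would establish the $\CR$-continuity~\eqref{Eq: Definition of T_0}. Fix $\mu \in \Sigma_\theta \cup \{ 0 \}$ and write $T_\lambda - T_\mu$ termwise using~\eqref{4.6}. The whole-space pressure $g$ is independent of $\lambda$ and cancels, so every surviving contribution is $\nabla \varphi$, $\Delta \varphi$, or $\CB_1$ applied to one of the differences $u^R_\lambda - u^R_\mu$, $u^D_\lambda - u^D_\mu$, $\pi^D_\lambda - \pi^D_\mu$, together with the genuinely $\lambda$-dependent term $-(\lambda - \mu) \CB_1\big( (\nabla \varphi) \cdot (u^R_\lambda - u^D_\lambda) \big)$ (with $\mu = 0$ in the limiting case). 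For the difference terms I invoke the quantified $\CR$-estimates directly: on the whole space~\eqref{Eq: Quantified approximation whole space, general} when $\mu \neq 0$ and~\eqref{Eq: Quantified approximation whole space, origin} when $\mu = 0$, and on the bounded domain~\eqref{Eq: Quantified approximation Lipschitz domain, velocity} and~\eqref{Eq: Quantified approximation Lipschitz domain, pressure}, composing in each case with the mapping properties of $\CB_1$ from Proposition~\ref{prop-3.4} and its negative-order extension $\CB_1 : \W^{-1 , p}_0(K_1) \to \L^p(K_1 ; \BC^n)$. The prefactors on the right-hand sides all tend to $0$ as $\lambda \to \mu$ (namely $\lvert \mu \rvert^{\alpha - 1} \lvert \lambda - \mu \rvert \to 0$ for $\mu \neq 0$, $\delta^\alpha \to 0$ at the origin, and $\lvert \lambda - \mu \rvert \to 0$ on $D$), so these contributions are $\leq \gamma / 2$ once $\lambda \in \Sigma_\theta \cap B_\delta(\mu)$ with $\delta$ small.

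For the remaining term $-(\lambda - \mu) \CB_1\big( (\nabla \varphi) \cdot (u^R_\lambda - u^D_\lambda) \big)$, which is not of difference type, I split $u^R_\lambda - u^D_\lambda = (u^R_\mu - u^D_\mu) + (u^R_\lambda - u^R_\mu) - (u^D_\lambda - u^D_\mu)$. Against the $\mu$-fixed part $u^R_\mu - u^D_\mu$, which defines a bounded operator $\L^p(\Omega ; \BC^n) \to \L^p(K_1 ; \BC^n)$ by the uniform-in-$\lambda$ bounds underlying the two convergence lemmas, Kahane's contraction principle (Proposition~\ref{Prop: Kahane}) applied with the scalars $\lambda - \mu$ of modulus $< \delta$ bounds the corresponding $\CR$-bound by $2 \delta$ times a fixed constant; against the two difference parts the same principle yields $2 \delta$ times the already small quantified $\CR$-bounds. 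Hence this term is $O(\delta)$ as well and is made $\leq \gamma / 2$ by shrinking $\delta$, which together with the previous paragraph proves~\eqref{Eq: Definition of T_0}.

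Finally, compactness and consistency follow almost immediately. Since the $\CR$-bound dominates the operator norm (Remark~\ref{Rem: R-boundedness implies uniform boundedness}), the case $\mu = 0$ of~\eqref{Eq: Definition of T_0} yields $\| T_\lambda - T_0 \|_{\CL(\L^p(\Omega ; \BC^n))} \to 0$ as $\lambda \to 0$; as each $T_\lambda$ is compact by Lemma~\ref{Lem: Properties of T_lambda}, the norm limit $T_0$ is compact as well (alternatively one applies Rellich's theorem on $\overline{K_1}$ to the $\W^{1 , p}$-bound established above). For consistency in the $\L^p$-scale, I note that for $f \in \L^{p_1}(\Omega ; \BC^n) \cap \L^{p_2}(\Omega ; \BC^n)$ the resolvents and Helmholtz projections defining $u^R_\lambda$, $u^D_\lambda$, and $\pi^D_\lambda$ are consistent across the two scales, and the convergences of Lemmas~\ref{Lem: Convergences for lambda to zero} and~\ref{Lem: Convergence for lambda to zero Lipschitz domain} take place in local topologies that both embed into $\L^1_{\loc}$; hence the limits agree as distributions, so the two versions of $T_0 f$ coincide. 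I expect the main obstacle to be the $\CR$-continuity step, and within it the $\lambda$-dependent Bogovski\u{\i} term: unlike the difference terms it is not controlled by the quantified estimates and requires the separate Kahane argument resting on a uniform bound of the solution families near the base point, while one must simultaneously reconcile the whole-space estimates --- valid only for $p < n / 2$ and requiring auxiliary higher-integrability exponents --- with the cleaner bounded-domain estimates inside a single $\CR$-bound.
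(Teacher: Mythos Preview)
Your proposal is correct and follows essentially the same route as the paper: define $T_0$ by the explicit $\lambda = 0$ formula, read off support and $\W^{1,p}$-boundedness from inner regularity plus Proposition~\ref{prop-3.4}, and prove the $\CR$-continuity~\eqref{Eq: Definition of T_0} termwise via the quantified estimates of Lemmas~\ref{Lem: Convergences for lambda to zero} and~\ref{Lem: Convergence for lambda to zero Lipschitz domain}, with the $\lambda\,\CB_1$-term treated separately. The only notable deviation is in that last term: where you split $u_\lambda^R - u_\lambda^D$ around the base point $\mu$ and invoke Kahane against a fixed bounded operator, the paper instead bounds $\|S_\mu^6\|$ via H\"older and the $\L^p$--$\L^q$ resolvent estimate of Lemma~\ref{Lem: R-bounded L^p-L^q estimates whole space}; both arguments use $p < n/2$ in the same essential way, and the negative-order Bogovski\u{\i} extension you mention is not actually needed.
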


\begin{proof}
We concentrate mainly on the case $\mu = 0$. Notice that the compactness of $T_0$ and $\supp (T_0 f) \subset \overline{K_1}$ for all $f \in \L^p (\Omega ; \BC^n)$ will be direct consequences of the convergence in~\eqref{Eq: Definition of T_0} and Lemma~\ref{Lem: Properties of T_lambda}. To establish~\eqref{Eq: Definition of T_0}, define for $f \in \L^p (\Omega ; \BC^n)$, $\lambda \in \Sigma_{\theta}$, and $u_{\lambda}^R$, $u_{\lambda}^D$, and $\pi_{\lambda}^D$ subject to~\eqref{eq-4.1} and~\eqref{eq-4.2}
\begin{align*}
 S_{\lambda}^1 f &:= [(\nabla \varphi) \cdot \nabla] u_{\lambda}^R, &&S_{\lambda}^2 f := [(\nabla \varphi) \cdot \nabla] u_{\lambda}^D, &&S_{\lambda}^3 f := (\Delta \varphi) u_{\lambda}^R \\
 S_{\lambda}^4 f &:= (\Delta \varphi) u_{\lambda}^D, &&S_{\lambda}^5 f := (\nabla \varphi) \pi_{\lambda}^D,  &&S_{\lambda}^6 f := \lambda \CB_1 \big( (\nabla \varphi) \cdot u_{\lambda}^R \big) \\
 S_{\lambda}^7 f &:= \Delta \CB_1 \big( (\nabla \varphi) \cdot u_{\lambda}^R \big), &&S_{\lambda}^8 f := \lambda \CB_1 \big( (\nabla \varphi) \cdot u_{\lambda}^D \big), &&S_{\lambda}^9 f := \Delta \CB_1 \big( (\nabla \varphi) \cdot u_{\lambda}^D \big).
\end{align*}
If $g$ is given by $(\I - \BP_{p , \BR^n}) f^R = \nabla g$, then~\eqref{4.6} delivers the relation
\begin{align*}
 T_{\lambda} f = - 2 S_{\lambda}^1 f + 2 S_{\lambda}^2 f - S_{\lambda}^3 f + S_{\lambda}^4 f - S_{\lambda}^5 f + (\nabla \varphi) g - S_{\lambda}^6 f + S_{\lambda}^7 f + S_{\lambda}^8 f - S_{\lambda}^9 f.
\end{align*}
First of all, notice that $g$ does not depend on $\lambda$ so that this term does not have to be investigated. Let $\mu \in \Sigma_{\theta}$. Then, since $\supp (\nabla \varphi) \subset \overline{B_{R + 2}} \setminus B_{R + 1}$ is compact, the convergences and estimates proven in Lemmas~\ref{Lem: Convergences for lambda to zero} and~\ref{Lem: Convergence for lambda to zero Lipschitz domain} carry over to respective convergences and estimates of $S_{\lambda}^1$, $S_{\lambda}^2$, $S_{\lambda}^3$, $S_{\lambda}^4$, and $S_{\lambda}^5$. Analogously, taking Proposition~\ref{prop-3.4} into account, convergences and estimates of $S_{\lambda}^7$ and $S_{\lambda}^9$ follow as well. Finally, to estimate $S_{\lambda}^6$ the triangle inequality gives for $\mu \in \Sigma_{\theta} \cap B_{\delta} (0)$
\begin{align*}
 \CR_{\L^p (\Omega ; \BC^n)} \big\{ S_{\lambda}^6 \mid \lambda \in \Sigma_{\theta} \cap B_{\delta} (0) \big\} \leq \CR_{\L^p (\Omega ; \BC^n)} \big\{ S_{\lambda}^6 - S_{\mu}^6 \mid \lambda \in \Sigma_{\theta} \cap B_{\delta} (0) \big\} + \| S_{\mu}^6 \|_{\CL(\L^p (\Omega ; \BC^n))}.
\end{align*}
Employing~\eqref{Eq: Quantified approximation whole space, general} and Kahane's contraction principle, the first term on the right-hand side is small whenever $\delta$ is small. For the second term on the right-hand side let $q > p$ with $1 / p - 1 / q < 2 / n$, use the boundedness of the Bogovski\u{\i} operator followed by H\"older's inequality and Lemma~\ref{Lem: R-bounded L^p-L^q estimates whole space} to estimate
\begin{align*}
 \| S_{\mu}^6 f \|_{\L^p (\Omega ; \BC^n)} \leq C \lvert \mu \rvert \| u_{\mu}^R \|_{\L^q (\BR^n ; \BC^n)} \leq \lvert \mu \rvert^{\frac{n}{2} (\frac{1}{p} - \frac{1}{q})} \| f \|_{\L^p (\Omega ; \BC^n)}.
\end{align*}
Analogously, one estimates $S_{\lambda}^8$. It follows that the $\CR$-norms of $(S_{\lambda}^6)_{\lambda \in \Sigma_{\theta} \cap B_{\delta} (0)}$ and $(S_{\lambda}^8)_{\lambda \in \Sigma_{\theta} \cap B_{\delta} (0)}$ are small, whenever $\delta$ is small. In particular, $S_{\lambda}^6 f$ and $S_{\lambda}^8 f$ converge to zero as $\lambda \to 0$. This establishes the existence of $T_0$. \par
To show that $T_0$ maps boundedly into $\W^{1 , p} (\Omega ; \BC^n)$, notice that this is true for each $T_{\lambda}$ if $\lambda \neq 0$ by Lemma~\ref{Lem: Properties of T_lambda}. Now, $T_\lambda f$ converges to $T_0 f$ in $\W^{1 , p} (\Omega ; \BC^n)$ as $\lambda \to 0$ due to Lemmas~\ref{Lem: Convergences for lambda to zero} and~\ref{Lem: Convergence for lambda to zero Lipschitz domain}. By the Banach--Steinhaus theorem, we find $T_0 \in \CL (\L^p (\Omega ; \BC^n) , \W^{1 , p} (\Omega ; \BC^n))$. \par
The case $\mu \neq 0$ follows literally by the same reasoning.
\end{proof}

\subsection{Step~4: Invertibility of $\I + T_{\lambda}$}
A direct consequence of Lemma~\ref{Lem: Properties of T_lambda} and Proposition~\ref{Prop: The operator T_0} is that for $\lambda \in \Sigma_{\theta} \cup \{ 0 \}$ the operator $\I + T_{\lambda}$ is Fredholm and thus the Fredholm alternative reduces the question of the invertibility of $\I + T_{\lambda}$ to the injectivity of $\I + T_{\lambda}$.

\begin{prop}
\label{Prop: Injectivity of I + T_lambda}
Let $\Omega \subset \BR^n$ be an exterior Lipschitz domain and $\theta \in (0 , \pi)$. Let $\varepsilon$ and $p$ be subject to Convention~\ref{Conv}. Then for every $\lambda \in \Sigma_{\theta}$ the operator $\I + T_{\lambda} : \L^p (\Omega ; \BC^n) \to \L^p (\Omega ; \BC^n)$ is injective. If additionally $p < n / 2$, the operator $\I + T_0 : \L^p (\Omega ; \BC^n) \to \L^p (\Omega ; \BC^n)$ is injective.
\end{prop}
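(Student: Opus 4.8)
The first reduction is the one already indicated before the statement: by Lemma~\ref{Lem: Properties of T_lambda} and Proposition~\ref{Prop: The operator T_0} the operator $\I + T_{\lambda}$ is Fredholm of index zero for every $\lambda \in \Sigma_{\theta} \cup \{ 0 \}$, so it suffices to show that its kernel is trivial. Fix therefore $f \in \L^p(\Omega ; \BC^n)$ with $(\I + T_{\lambda}) f = 0$. Since $f = - T_{\lambda} f$, the support properties recorded in Lemma~\ref{Lem: Properties of T_lambda} and Proposition~\ref{Prop: The operator T_0} force $\supp(f) \subset \overline{K_1}$ and $f \in \W^{1 , p}(\Omega ; \BC^n)$. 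In particular $f$ is compactly supported and lies in $\L^2(\Omega ; \BC^n) \cap \L^q(\Omega ; \BC^n)$ for all admissible $q$, and the whole-space and bounded-domain data coincide, $f^R = f^D = f$. Because $(\I + T_{\lambda}) f = 0$, equation~\eqref{eq-4.5} shows that the pair $(U_{\lambda} f , \Pi_{\lambda} f)$ solves the homogeneous Stokes resolvent problem on $\Omega$ with $U_{\lambda} f \in \W^{1 , p}_{0 , \sigma}(\Omega)$.

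The plan is then to proceed in two stages: first to show $U_{\lambda} f = 0$, and then to deduce $f = 0$ from this. For the first stage I would invoke uniqueness for the exterior problem. When $\lambda \in \Sigma_{\theta} \setminus \{ 0 \}$, the compact support of $f$ together with the $\L^2$-realisations of the resolvents on $\BR^n$ and on $D$ (which are consistent with the $\L^p$-ones used to build $U_{\lambda}$) places $U_{\lambda} f$ in $\W^{1 , 2}_{0 , \sigma}(\Omega)$; testing the momentum equation against $\overline{U_{\lambda} f}$ and integrating by parts gives $\lambda \| U_{\lambda} f \|_{\L^2}^2 + \| \nabla U_{\lambda} f \|_{\L^2}^2 = 0$, and since $\lvert \arg \lambda \rvert < \theta < \pi$ this yields $U_{\lambda} f = 0$. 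When $\lambda = 0$ the hypothesis $p < n / 2$ enters decisively: Sobolev's embedding together with the $\L^p$-$\L^q$ estimates guarantees that $U_0 f$ decays and has a finite Dirichlet integral, so that the classical uniqueness theorem for the homogeneous stationary Stokes system in an exterior domain applies and again gives $U_0 f = 0$.

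The second stage, namely passing from $U_{\lambda} f = 0$ back to $f = 0$, is where I expect the real difficulty to lie. Reading the definition~\eqref{Eq: The approximate solution} on the set $\{ \lvert x \rvert \leq R + 1 \}$ where $\varphi$ vanishes and on $\{ \lvert x \rvert \geq R + 2 \}$ where $\varphi \equiv 1$, and recalling that the correction $\CB_1(\cdot)$ is supported in $\overline{K_1}$, one extracts the pointwise information
\begin{align*}
 u^D_{\lambda} = 0 \ \text{on } B_R(0) \cap \Omega \qquad \text{and} \qquad u^R_{\lambda} = 0 \ \text{on } \{ \lvert x \rvert \geq R + 3 \},
\end{align*}
and correspondingly $\nabla \pi^D_{\lambda} = 0$ and $\nabla g = 0$ on these sets. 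On the complementary annular region $\supp(f) \subset \overline{K_1}$ both $u^R_{\lambda}$ and $u^D_{\lambda}$ solve the \emph{same} inhomogeneous systems~\eqref{eq-4.1} and~\eqref{eq-4.2}, so their difference $w := u^R_{\lambda} - u^D_{\lambda}$ is a solution of the homogeneous Stokes resolvent system on the shell $\{ R < \lvert x \rvert < R + 5 \}$ which vanishes on the outer sphere through the Dirichlet condition on $\partial D$. The idea is to combine this matching with unique continuation for the Stokes resolvent system — legitimate because, where $f$ vanishes, the pressure is harmonic and the velocity solves an elliptic system — in order to propagate the vanishing of $u^R_{\lambda}$ and $u^D_{\lambda}$ across $\supp(f)$, thereby forcing $u^R_{\lambda} \equiv 0$ and $u^D_{\lambda} \equiv 0$ and hence $f = 0$.

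The two genuinely delicate points are therefore the justification of the $\L^2$-reduction and of the stationary exterior uniqueness in the first stage (the latter being exactly where $p < n / 2$ is used in the case $\lambda = 0$), and, above all, the propagation-of-vanishing argument in the second stage. The latter is precisely where the locality of $T_{\lambda}$ emphasised in Step~3 is indispensable: it confines the coupling between the whole-space and bounded-domain building blocks to the compact shell $\overline{K_1}$, and the careful bookkeeping of the Bogovski\u{\i} correction on $\{ R + 2 < \lvert x \rvert < R + 3 \}$ together with the homogeneous boundary values on $\partial D$ is what I expect to require the most effort to make rigorous.
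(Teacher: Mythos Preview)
Your first stage is essentially the paper's argument for $\lambda \neq 0$; for $\lambda = 0$ the paper does not invoke an exterior uniqueness theorem but instead passes to the limit $\mu \to 0$ in the identity $\mu \| U_\mu f\|_{\L^2}^2 + \|\nabla U_\mu f\|_{\L^2}^2 = \int_\Omega (\I + T_\mu)f \cdot \overline{U_\mu f}\dx$, using that $(\I + T_\mu)f \to 0$ in $\L^q$ on the compact set $\overline{K_1}$ while $U_\mu f$ stays locally bounded. Your direct route may also work once you actually verify $\nabla U_0 f \in \L^2(\Omega)$, but this needs justification.

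The real gap is in your second stage. You propose to use unique continuation for the Stokes system to ``propagate the vanishing across $\supp(f)$'', but this cannot work as written. The difference $w = u^R_\lambda - u^D_\lambda$ does solve the homogeneous system on the shell $\{R < \lvert x \rvert < R+5\}$, yet you only know $w = 0$ on the outer sphere $\{\lvert x \rvert = R+5\}$, not on any open set. Similarly, $u^R_\lambda$ vanishes on $\{\lvert x \rvert > R+3\}$ and $u^D_\lambda$ on $\Omega \cap B_R(0)$, but neither piece of information can be pushed through the layer $\overline{K_1}$ by unique continuation, because there the equations for $u^R_\lambda$ and $u^D_\lambda$ individually are \emph{inhomogeneous} with the as-yet-unknown right-hand side $f$. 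Your remark that ``where $f$ vanishes, the pressure is harmonic'' is precisely the point: $f$ does \emph{not} vanish on $K_1$, so unique continuation is blocked exactly where you need it.

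The paper's way out avoids unique continuation entirely. Since $u^D_\lambda = 0$ on $\Omega \cap B_R(0)$ and has zero trace on $\partial\Omega$, it extends by zero to an element of $\W^{1,2}_{0,\sigma}(B_{R+5}(0))$; since $u^R_\lambda = 0$ for $\lvert x \rvert > R+3$, its restriction also lies in $\W^{1,2}_{0,\sigma}(B_{R+5}(0))$. Because $f^R = f^D = f$ on $B_{R+5}(0)$, \emph{both} functions solve the \emph{same} Stokes (resolvent) Dirichlet problem on the \emph{ball} $B_{R+5}(0)$ with the same right-hand side. Standard uniqueness on this bounded domain gives $u^R_\lambda = u^D_\lambda$ on all of $B_{R+5}(0)$, whence $u^R_\lambda = U_\lambda f = 0$ on $\Omega$ and in fact on $\BR^n$, so $\BP_{p,\BR^n} f^R = 0$. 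A short argument using the pressure normalization~\eqref{Eq: Normalization of g} then forces $g = 0$, hence $f = f^R = 0$. This matching-on-the-ball step, rather than unique continuation, is the key idea you are missing.
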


\begin{proof}
Let $\lambda \in \Sigma_{\theta} \cup \{ 0 \}$ and assume that there exists $f \in \L^p (\Omega ; \BC^n)$ with $(\I + T_{\lambda}) f = 0$ (with $p < n / 2$ in the case $\lambda = 0$). In other words, it holds
\begin{align*}
 f = - T_{\lambda} f \qquad \text{in } \Omega.
\end{align*}
As a consequence of Lemma~\ref{Lem: Properties of T_lambda} and Proposition~\ref{Prop: The operator T_0} the function $f$ satisfies
\begin{align*}
 \supp (f) \subset \overline{K_1}.
\end{align*}
On the one hand, this support property of $f$ ensures that $f \in \L^q (\Omega ; \BC^n)$ for all $1 \leq q \leq p$. On the other hand, Lemma~\ref{Lem: Properties of T_lambda} and Proposition~\ref{Prop: The operator T_0} ensure that $f \in \W^{1 , p} (\Omega ; \BC^n)$. Thus, Sobolev's embedding theorem entails $f \in \L^{p^*} (\Omega ; \BC^n)$ with $p^* := n p / (n - p)$. If $\lambda = 0$ and $p^* < n / 2$, then Proposition~\ref{Prop: The operator T_0} ensures that $f \in \W^{1 , p^*} (\Omega ; \BC^n)$ which is embedded by Sobolev's embedding theorem into $\L^{p^{**}} (\Omega ; \BC^n)$ with $p^{**} := n p^* / (n - p^*)$. If $\lambda \neq 0$ and $p^* < n$, then Lemma~\ref{Lem: Properties of T_lambda} together with Sobolev's embedding theorem implies $f \in \L^{p^{**}} (\Omega ; \BC^n)$ as well. Iterate this procedure until $f \in \L^q (\Omega ; \BC^n)$ for each $1 \leq q < n$ (if $\lambda = 0$) or $f \in \L^q (\Omega ; \BC^n)$ for each $1 \leq q < \infty$ (if $\lambda \neq 0$) is established. \par
Let $\lambda \neq 0$. We find in particular $f \in \L^2 (\Omega ; \BC^n)$ and thus $U_{\lambda} f \in \W^{1 , 2}_{0 , \sigma} (\Omega)$ and $\Pi_{\lambda} f \in \L^2_{\loc} (\Omega)$. Consequently,
\begin{align*}
 \lambda \int_{\Omega} \lvert U_{\lambda} f \rvert^2 \dx + \int_{\Omega} \lvert \nabla U_{\lambda} f \rvert^2 \dx = \int_{\Omega} (\I + T_{\lambda}) f \cdot \overline{U_{\lambda} f} \dx = 0.
\end{align*}
Since $\lambda \in \Sigma_{\theta}$ it follows that
\begin{align*}
 \int_{\Omega} \lvert U_{\lambda} f \rvert^2 \dx = 0 \qquad \text{and} \qquad \Pi_{\lambda} f = c \quad \text{in } \Omega \text{ for some } c \in \BC.
\end{align*}
\indent Let $\lambda = 0$. Since in particular $f \in \L^q (\Omega ; \BC^n)$ for all $1 \leq q < n / 2$ subject to Convention~\ref{Conv}, Proposition~\ref{Prop: The operator T_0} ensures for these $q$ the $\L^q$-convergence
\begin{align*}
 (\I + T_0) f = \lim_{\substack{ \lvert \mu \rvert \to 0 \\ \mu \in \Sigma_{\theta} }} (\I + T_{\mu}) f = \lim_{\substack{ \lvert \mu \rvert \to 0 \\ \mu \in \Sigma_{\theta} }} \big[ (\mu - \Delta) U_{\mu} f + \nabla \Pi_{\mu} f \big].
\end{align*}
In particular, this convergence is valid for some $q$ satisfying $2 n / (n + 2) < q < n / 2$. Moreover, since also $f \in \L^r (\Omega ; \BC^n)$ for all $1 \leq r < n$, we find $f \in \L^2(\Omega ; \BC^n)$ and thus that for each $\mu \in \Sigma_{\theta}$ it holds
\begin{align*}
  \mu \int_{\Omega} \lvert U_{\mu} f \rvert^2 \dx + \int_{\Omega} \lvert \nabla U_{\mu} f \rvert^2 \dx = \int_{\Omega} (\I + T_{\mu}) f \cdot \overline{U_{\mu} f} \dx.
\end{align*}
By assumption $(\I + T_{\mu}) f$ converges to zero in $\L^q (\Omega ; \BC^n)$ and notice that the support of $(\I + T_{\mu}) f$ is contained in $\overline{K_1}$. Moreover, by Sobolev's embedding theorem and the special choice of $q$ we have $\W^{2 , q}_{\loc} (\Omega ; \BC^n) \subset \L^{q^{\prime}}_{\loc} (\Omega ; \BC^n)$. Thus, $(U_{\mu})_{\mu \in \Sigma_{\theta}}$ is bounded in $\L^{q^{\prime}} (K_1 ; \BC^n)$ as $\lvert \mu \rvert \to 0$ by Lemmas~\ref{Lem: Convergences for lambda to zero} and~\ref{Lem: Convergence for lambda to zero Lipschitz domain}. It follows that
\begin{align*}
 \int_{\Omega} \lvert \nabla U_0 f \rvert^2 \dx \leq C \lim_{\substack{\lvert \mu \rvert \to 0 \\ \mu \in \Sigma_{\theta}}} \bigg\lvert \mu \int_{\Omega} \lvert U_{\mu} f \rvert^2 \dx + \int_{\Omega} \lvert \nabla U_{\mu} f \rvert^2 \dx \bigg\rvert = 0.
\end{align*}
Consequently, in all cases ($\lambda = 0$ and $\lambda \neq 0$) it holds $U_{\lambda} f = 0$ and $\Pi_{\lambda} f = c$ for some $c \in \BC$. \par
Combining~\eqref{Eq: The approximate solution} and the definition of $\Pi_{\lambda} f$ above~\eqref{eq-4.5} together with Lemmas~\ref{Lem: Convergences for lambda to zero} and~\ref{Lem: Convergence for lambda to zero Lipschitz domain}, we find that in both cases ($\lambda = 0$ and $\lambda \neq 0$) $U_{\lambda} f$ and $\Pi_{\lambda} f$ are given by
\begin{align}
\label{Eq: Formula above integrabilities}
 U_{\lambda} f = \varphi u_{\lambda}^R + (1 - \varphi) u_{\lambda}^D - \CB_1 \big( (\nabla \varphi) \cdot (u_{\lambda}^R - u_{\lambda}^D) \big) \quad \text{and} \quad \Pi_{\lambda} f = (1 - \varphi) \pi_{\lambda}^D + \varphi g.
\end{align}
Since in all cases $f \in \L^q (\Omega ; \BC^n)$ for all $1 \leq q < n$, Lemma~\ref{Lem: Convergences for lambda to zero} asserts that for all $1 < r < n / 2$ its holds $u_{\lambda}^R \in \W^{2 , r}_{\loc} (\BR^n ; \BC^n)$ with $\nabla^2 u_{\lambda}^R \in \L^r (\BR^n ; \BC^{n^3})$ and $\dv (u_{\lambda}^R) = 0$ and $g \in \L^r_{\loc} (\BR^n)$ with $\nabla g \in \L^r (\BR^n ; \BC^n)$. Furthermore, concerning $u_{\lambda}^D$ and $\pi_{\lambda}^D$, Lemma~\ref{Lem: Convergence for lambda to zero Lipschitz domain} asserts that for all $r < n$ subject to Convention~\ref{Conv}, $u_{\lambda}^D \in \W^{1 , r}_{0 , \sigma} (D) \cap \W^{2 , r}_{\loc} (D ; \BC^n)$ and $\pi_{\lambda}^D \in \L^r_0 (D) \cap \W^{1 , r}_{\loc} (D)$. By the definition of $\varphi$, the fact that $\CB_1 ((\nabla \varphi) \cdot (u_{\lambda}^R - u_{\lambda}^D))$ is extended by zero to all of $\BR^n$ outside of $K_1$, and the fact that $U_{\lambda} f = 0$ in $\Omega$, we find
\begin{align*}
 u_{\lambda}^R (x) = 0 \quad \text{for } \lvert x \rvert > R + 3 \qquad \text{and} \qquad u_{\lambda}^D (x) = 0 \quad \text{for } x \in \Omega \cap B_R (0).
\end{align*}
Furthermore, we also find $\pi_{\lambda}^D$ to be constant on $\Omega \cap B_R (0)$ and $g$ to be constant on $B_{R + 3} (0)^c$. Thus, $u_{\lambda}^D$ and $\pi_{\lambda}^D$ can be extended constantly to functions in $\W^{1 , 2}_{0 , \sigma} (B_{R + 5} (0))$ and $\L^2 (B_{R + 5} (0))$, respectively. Let us denote these functions by $u_{\lambda}^D$ and $\pi_{\lambda}^D$ again. \par
Next, recall the definition of $\eta$ and notice that the fact $\supp(f) \subset \overline{K_1}$ implies
\begin{align*}
 \nabla \eta \cdot f = 0 \quad \text{in } K_2 \qquad \text{and} \qquad \eta f  = f \quad \text{in } \Omega.
\end{align*}
Consequently, by definition of $f^D$ it holds $f^D = f$ in $D$. Since $2 n / (n + 2) < n / 2$, the integrabilities stated below~\eqref{Eq: Formula above integrabilities} imply $u_{\lambda}^R \in \W^{2 , 2n / (n + 2)}_{0 , \sigma} (B_{R + 5} (0))$ and by Sobolev's embedding theorem that $u_{\lambda}^R \in \W^{1 , 2}_{0 , \sigma} (B_{R + 5} (0))$. Consequently, $u_{\lambda}^D$ and $u_{\lambda}^R$ solve the Stokes (resolvent) problem in $B_{R + 5} (0)$ subject to homogeneous Dirichlet boundary conditions and the same right-hand side $f$. Consequently, these functions have to coincide in $B_{R + 5} (0)$ and there exists a constant $c_1 \in \BC$ such that $\pi_{\lambda}^D = g + c_1$ in $B_{R + 5} (0)$. Furthermore, it follows that $u_{\lambda}^R = U_{\lambda} f$ and thus
\begin{align*}
 u_{\lambda}^R = 0 \quad \text{in } \Omega,
\end{align*}
hence $\BP_{p , \BR^n} f = 0$. Since $\pi_{\lambda}^D$ and $g$ are normalized to have average zero on $D$, see~\eqref{Eq: Normalization of g}, it follows that $c_1 = 0$. Since $\Pi_{\lambda} f = c_2$ for some constant $c_2 \in \BC$ it follows that
\begin{align*}
 c_2 = (1 - \varphi) \pi_{\lambda}^D + \varphi g = (1 - \varphi) \pi_{\lambda}^D + \varphi \pi_{\lambda}^D = \pi_{\lambda}^D,
\end{align*}
and again, since the average of $\pi_{\lambda}^D$ is zero in $D$, it follows that $c_2 = 0$. Consequently, $g$ vanishes on all of $\BR^n$, which implies $f^R = 0$ and thus $f = 0$.
\end{proof}

\begin{lemm}
\label{Lem: Control for small lambda}
Let $\varepsilon$ and $p$ be subject to Convention~\ref{Conv} with $p < n / 2$, $\theta \in (0 , \pi)$, and $\lambda^* > 0$. Then for all $\lambda \in \overline{\Sigma_{\theta} \cap B_{\lambda^*} (0)}$ the operator $\I + T_{\lambda} : \L^p (\Omega ; \BC^n) \to \L^p (\Omega ; \BC^n)$ is invertible and there exists a constant $C > 0$ such that
\begin{align*}
 \CR_{\L^p (\Omega ; \BC^n)} \big\{ (\I + T_{\lambda})^{-1} \mid \lambda \in \Sigma_{\theta} \cap B_{\lambda^*} (0) \big\} \leq C.
\end{align*} 
\end{lemm}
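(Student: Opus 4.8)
The plan is to combine the Fredholm theory already set up with a local Neumann series argument in the spirit of Corollary~\ref{Cor: Control for large lambda}. First I would fix an auxiliary angle $\theta' \in (\theta , \pi)$ and run the whole argument with $\theta'$ in place of $\theta$; this is harmless since $\overline{\Sigma_{\theta} \cap B_{\lambda^*}(0)} \setminus \{ 0 \} \subset \Sigma_{\theta'}$, so that every $\lambda$ in the compact set $\overline{\Sigma_{\theta} \cap B_{\lambda^*}(0)}$ lies in $\Sigma_{\theta'} \cup \{ 0 \}$ and all of Lemma~\ref{Lem: Properties of T_lambda}, Proposition~\ref{Prop: The operator T_0}, and Proposition~\ref{Prop: Injectivity of I + T_lambda} are available there. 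For any such $\lambda$ the operator $T_{\lambda}$ (respectively $T_0$) is compact, so $\I + T_{\lambda}$ is Fredholm of index zero; combined with the injectivity from Proposition~\ref{Prop: Injectivity of I + T_lambda} (for $\lambda = 0$ this uses $p < n / 2$), the Fredholm alternative yields that $\I + T_{\lambda}$ is invertible for every $\lambda \in \overline{\Sigma_{\theta} \cap B_{\lambda^*}(0)}$. This settles the qualitative invertibility claim.

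For the quantitative $\CR$-bound I would localize. Fix $\mu \in \overline{\Sigma_{\theta} \cap B_{\lambda^*}(0)}$ and set $M_{\mu} := \| (\I + T_{\mu})^{-1} \|_{\CL(\L^p (\Omega ; \BC^n))}$, which is finite by the previous step. Writing
\begin{align*}
 \I + T_{\lambda} = (\I + T_{\mu}) \big( \I + (\I + T_{\mu})^{-1} (T_{\lambda} - T_{\mu}) \big),
\end{align*}
I would invoke Proposition~\ref{Prop: The operator T_0} to choose $\delta = \delta(\mu) > 0$ so small that $\CR_{\L^p (\Omega ; \BC^n)} \{ T_{\lambda} - T_{\mu} \mid \lambda \in \Sigma_{\theta'} \cap B_{\delta}(\mu) \} \leq 1 / (2 M_{\mu})$. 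Since a single operator has $\CR$-bound equal to its norm and $\CR$-bounds are submultiplicative under composition, the family $\{ (\I + T_{\mu})^{-1} (T_{\lambda} - T_{\mu}) \}_{\lambda}$ then has $\CR$-bound at most $1 / 2$, and a Neumann series argument together with Kahane's contraction principle, Proposition~\ref{Prop: Kahane}, exactly as in Corollary~\ref{Cor: Control for large lambda}, shows that the inner factor is invertible with
\begin{align*}
 \CR_{\L^p (\Omega ; \BC^n)} \big\{ \big( \I + (\I + T_{\mu})^{-1} (T_{\lambda} - T_{\mu}) \big)^{-1} \mid \lambda \in \Sigma_{\theta'} \cap B_{\delta}(\mu) \big\} \leq 2.
\end{align*}
Multiplying by the fixed operator $(\I + T_{\mu})^{-1}$ and using submultiplicativity once more gives $\CR_{\L^p (\Omega ; \BC^n)} \{ (\I + T_{\lambda})^{-1} \mid \lambda \in \Sigma_{\theta'} \cap B_{\delta}(\mu) \} \leq 2 M_{\mu}$.

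Finally, I would exploit compactness: the balls $\{ B_{\delta(\mu)}(\mu) \}_{\mu}$ cover the compact set $\overline{\Sigma_{\theta} \cap B_{\lambda^*}(0)}$, so finitely many of them, say centered at $\mu_1 , \dots , \mu_m$, suffice. Since a finite union of $\CR$-bounded families is $\CR$-bounded, setting $C := \sum_{k = 1}^m 2 M_{\mu_k}$ yields the desired uniform bound $\CR_{\L^p (\Omega ; \BC^n)} \{ (\I + T_{\lambda})^{-1} \mid \lambda \in \Sigma_{\theta} \cap B_{\lambda^*}(0) \} \leq C$. The genuinely hard part of this circle of ideas is not in the present lemma but is borrowed wholesale from Proposition~\ref{Prop: The operator T_0}: the $\CR$-continuity of $\lambda \mapsto T_{\lambda}$ up to and including $\lambda = 0$. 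Granting that, the only mild subtlety here is the passage to $\theta'$ needed to include the boundary rays of the closed sector, together with the upgrade of the classical Neumann series from norm bounds to $\CR$-bounds via Kahane's principle.
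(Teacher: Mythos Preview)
Your proposal is correct and follows essentially the same route as the paper: Fredholm alternative plus Proposition~\ref{Prop: Injectivity of I + T_lambda} for invertibility, then a local Neumann series based on the $\CR$-continuity from Proposition~\ref{Prop: The operator T_0}, and finally a compactness covering to patch the local $\CR$-bounds into a global one. Your passage to an auxiliary angle $\theta' > \theta$ to capture the boundary rays of the closed sector is in fact a bit more careful than the paper, which glosses over this point.
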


\begin{proof}
By Lemma~\ref{Lem: Properties of T_lambda} and Propositions~\ref{Prop: The operator T_0} and~\ref{Prop: Injectivity of I + T_lambda} for each $\mu \in \overline{\Sigma_{\theta} \cap B_{\lambda^*} (0)}$ the operators $\I + T_{\mu}$ are invertible. Moreover, by Proposition~\ref{Prop: The operator T_0} there exists $\delta_{\mu} > 0$ such that
\begin{align*}
 \CR_{\L^p (\Omega ; \BC^n)} \big\{ T_{\lambda} - T_{\mu} \mid \lambda \in \Sigma_\theta \cap B_{\delta_{\mu}} (\mu) \big\} \leq \big(2 \| (\I + T_{\mu})^{-1} \|_{\CL(\L^p (\Omega ; \BC^n))} \big)^{-1}.
\end{align*}
Since
\begin{align*}
 \big\{ B_{\delta_{\mu}} (\mu) \mid \mu \in \overline{\Sigma_{\theta} \cap B_{\lambda^*} (0)} \big\}
\end{align*}
is an open covering of the compact set $\overline{\Sigma_{\theta} \cap B_{\lambda^*} (0)}$, there exists $m \in \BN$ together with $\mu_j \in \overline{\Sigma_{\theta} \cap B_{\lambda^*} (0)}$ $(j = 1 , \dots , m)$, such that
\begin{align*}
 \Sigma_{\theta} \cap B_{\lambda^*} (0) \subset \bigcup_{j = 1}^m B_{\delta_{\mu_j}} (\mu_j).
\end{align*}
By the choices above, a usual Neumann series argument shows that for all $j = 1 , \dots , m$ it holds
\begin{align*}
 \CR_{\L^p (\Omega ; \BC^n)} \big\{ (\I + T_{\lambda})^{-1} \mid \lambda \in \Sigma_\theta \cap B_{\delta_{\mu_j}} (\mu_j) \big\} < 2 \| (\I + T_{\mu_j})^{-1} \|_{\CL(\L^p (\Omega ; \BC^n))}.
\end{align*}
Thus, the lemma is proved.
\end{proof}

\subsection*{Step~5: Proof of Theorems~\ref{th-1.1} and~\ref{Thm: Maximal regularity}}

Let $\varepsilon$ and $p$ be subject to Convention~\ref{Conv} and $f \in \L^p (\Omega ; \BC^n)$.
Combining Lemma~\ref{Lem: Properties of T_lambda} and Proposition~\ref{Prop: Injectivity of I + T_lambda}, we infer that $\I + T_1$ is invertible. Thus, defining
\begin{align}
\label{Eq: -1 in resolvent set}
 u := U_{1} (\I + T_{1})^{-1} f \qquad \text{and} \qquad \pi := \Pi_{1} (\I + T_{1})^{-1} f,
\end{align}
we find by~\eqref{Eq: Ineq 2} that there exists $C > 0$ such that
\begin{align}
\label{Eq: W1p estimate}
 \| u \|_{\W^{1 , p}_{0 , \sigma} (\Omega)} \leq C \| f \|_{\L^p_{\sigma} (\Omega)}.
\end{align}
We argue now, that $-1 \in \rho(A_p)$. Let $p = 2$ for a moment. Notice that since the operator $U_1$ and $\Pi_1$ solve~\eqref{eq-4.5} in the sense of distributions, moreover, since~\eqref{Eq: W1p estimate} holds true, and since $A_2$ is defined by a sesquilinear form, we find
\begin{align}
\label{Eq: L2 resolvent}
 U_1 (\I + T_1)^{-1} f = (\I + A_2)^{-1} f \qquad (f \in \L^2_{\sigma} (\Omega)).
\end{align}
Now, consider the case of general $p$ and recall the definition of $A_p$ given in the introduction. Let $f \in \L^p_{\sigma} (\Omega)$. Then $f$ can be approximated in $\L^p_{\sigma} (\Omega)$ by a sequence $(f_k)_{k \in \BN} \subset \C_{c , \sigma}^{\infty} (\Omega) \subset \L^2_{\sigma}(\Omega) \cap \L^p_{\sigma} (\Omega)$. Let $u_k$ be given by~\eqref{Eq: -1 in resolvent set} but with right-hand side $f_k$, so that $u_k \in \CD(A_2)$. The estimate~\eqref{Eq: W1p estimate} especially implies that $u_k \in \L^p_{\sigma} (\Omega)$, which shows that 
\begin{align*}
 u_k \in \{ v \in \CD (A_2) \cap \L^p_{\sigma} (\Omega) : A_2 v \in \L^p_{\sigma} (\Omega) \} = \CD(A_2|_{\L^p_{\sigma}}).
\end{align*}
By~\eqref{Eq: W1p estimate}, the sequence $(u_k)_{k \in \BN}$ converges in $\W^{1 , p}_{0 , \sigma} (\Omega)$ to $u$ defined by~\eqref{Eq: -1 in resolvent set} with right-hand side $f$. Since $A_p$ is the closure of $A_2|_{\L^p_{\sigma}}$ in $\L^p_{\sigma} (\Omega)$, we find $u \in \CD(A_p)$ and $u + A_p u = f$. This proves the surjectivity. \par
For the injectivity, notice that by~\eqref{Eq: L2 resolvent} it holds
\begin{align*}
 U_1 (\I + T_1)^{-1} (\I + A_2|_{\L^p_{\sigma}}) u = u \qquad (u \in \CD(A_2|_{\L^p_{\sigma}})).
\end{align*}
Since $A_p$ is the closure of $A_2|_{\L^p_{\sigma}}$ in $\L^p_{\sigma} (\Omega)$, this identity carries over to all $u \in \CD(A_p)$ by taking limits while using~\eqref{Eq: W1p estimate}. This proves the injectivity. Since $A_p$ is by definition closed, it follows $-1 \in \rho(A_p)$ and~\eqref{Eq: W1p estimate} implies the continuous inclusion $\CD(A_p) \subset \W^{1 , p}_{0 , \sigma} (\Omega)$. Notice that with the same reasoning, one readily verifies that $\Sigma_{\theta} \subset \rho(- A_p)$ holds for every $\theta \in (0 , \pi)$. We continue by estimating the resolvent. \par
Next, let $p < n / 2$, $f \in \L^p_{\sigma} (\Omega)$, and $\lambda \in \Sigma_{\theta}$. By construction, $U_{\lambda} f$ and $\Pi_{\lambda} f$ solve~\eqref{eq-4.5}. Decompose by means of the Helmholtz decomposition~\eqref{Eq: Helmholtz general} and Proposition~\ref{Prop: Helmholtz on exterior}
\begin{align*}
 f + T_{\lambda} f = f + \BP_{p , \Omega} T_{\lambda} f + (\I - \BP_{p , \Omega}) T_{\lambda} f =: f + \BP_{p , \Omega} T_{\lambda} f + \nabla \Phi_{\lambda} f.
\end{align*}
Thus, $U_{\lambda} f$ and $\Pi_{\lambda} f - \Phi_{\lambda} f$ solve~\eqref{Eq: Stokes resolvent problem} with right-hand side $f + \BP_{p , \Omega} T_{\lambda} f$.
Let $\lambda^* \geq 1$ be the number obtained in Corollary~\ref{Cor: Control for large lambda}, \textit{i.e.}, $\lambda^*$ is chosen such that
\begin{align*}
 \CR_{\L^p_{\sigma} (\Omega)} \big\{ (\I + \BP_{p , \Omega} T_{\lambda})^{-1} \mid \lambda \in \Sigma_\theta , \lvert \lambda \rvert \geq \lambda^* \big\} \leq 2.
\end{align*}
Thus, if $\lvert \lambda \rvert \geq \lambda^*$ the functions
\begin{align*}
 u := U_{\lambda} (\I + \BP_{p , \Omega} T_{\lambda})^{-1} f \qquad \text{and} \qquad \pi := \Pi_{\lambda} (\I + \BP_{p , \Omega} T_{\lambda})^{-1} f  - \Phi_{\lambda} (\I + \BP_{p , \Omega} T_{\lambda})^{-1} f 
\end{align*}
solve the Stokes resolvent problem~\eqref{Eq: Stokes resolvent problem} with right-hand side $f$. Since compositions of $\CR$-bounded sets are $\CR$-bounded, Lemma~\ref{Lem: Perturbed solution operator} together with Corollary~\ref{Cor: Control for large lambda} imply that there exists $C > 0$ such that
\begin{align*}
 \CR_{\L^p_{\sigma} (\Omega)} \big\{ \lambda (\lambda + A_p)^{-1} \mid \lambda \in \Sigma_{\theta} , \lvert \lambda \rvert \geq \lambda^* \big\} \leq C.
\end{align*}
Furthermore, the boundedness of $\BP_{p , \Omega}$, see Proposition~\ref{Prop: Helmholtz on exterior}, implies that
\begin{align}
\label{Eq: Maximal regularity large lambda}
 \CR_{\L^p (\Omega ; \BC^n)} \big\{ \lambda (\lambda + A_p)^{-1} \BP_{p , \Omega} \mid \lambda \in \Sigma_{\theta} , \lvert \lambda \rvert \geq \lambda^* \big\} \leq C
\end{align}
for some possibly different constant $C > 0$. \par
If $\lvert \lambda \rvert < \lambda^*$, then Lemma~\ref{Lem: Control for small lambda} allows us to conclude that the solutions to~\eqref{Eq: Stokes resolvent problem} with right-hand side $f$ are given by
\begin{align*}
 u := U_{\lambda} (\I + T_{\lambda})^{-1} f \qquad \text{and} \qquad \pi := \Pi_{\lambda} (\I + T_{\lambda})^{-1} f.
\end{align*}
Combining the same lemma with Lemma~\ref{Lem: Perturbed solution operator} yields the existence of a constant $C > 0$ such that
\begin{align}
\label{Eq: Maximal regularity small lambda}
 \CR_{\L^p (\Omega ; \BC^n)} \big\{ \lambda (\lambda + A_p)^{-1} \BP_{p , \Omega} \mid \lambda \in \Sigma_{\theta} , \lvert \lambda \rvert < \lambda^* \big\} \leq C.
\end{align}
Since the union of two $\CR$-bounded sets is again $\CR$-bounded (this follows by an application of Kahane's contraction principle) it follows by~\eqref{Eq: Maximal regularity large lambda} and~\eqref{Eq: Maximal regularity small lambda} that
\begin{align}
\label{Eq: Maximal regularity small p}
 \CR_{\L^p (\Omega ; \BC^n)} \big\{ \lambda (\lambda + A_p)^{-1} \BP_{p , \Omega} \mid \lambda \in \Sigma_{\theta} \big\} \leq C
\end{align}
for some constant $C > 0$. Since in the case $p = 2$ uniform boundedness implies $\CR$-boundedness, see Remark~\ref{Rem: R-boundedness implies uniform boundedness},~\eqref{Eq: Maximal regularity small p} holds also true in the case $p = 2$. By complex interpolation, it follows that~\eqref{Eq: Maximal regularity small p} holds true for all $p \leq 2$ subject to Convention~\ref{Conv}. Finally, the duality result in~\cite[Lem.~3.1]{Kalton_Weis} implies the validity of~\eqref{Eq: Maximal regularity small p} for all $p$ that satisfy Convention~\ref{Conv}. Now, Remark~\ref{Rem: R-boundedness implies uniform boundedness} implies that $- A_p$ generates a bounded analytic semigroup $(T(t))_{t \geq 0}$ on $\L^p_{\sigma} (\Omega)$ and Proposition~\ref{Prop: Weis} implies Theorem~\ref{Thm: Maximal regularity}.  \par
In order to prove Theorem~\ref{th-1.1}, notice that $\CR$-boundedness implies uniform boundedness of the particular family of operators. Thus, proceeding as in the proof of Theorem~\ref{Thm: Maximal regularity}, we conclude by Lemma~\ref{Lem: Perturbed solution operator} that for $p < n / 2$ and for all $q \geq p$ with $\sigma := n (1 / p - 1 / q) / 2 \leq 1 / 2$ there exists a constant $C > 0$ such that for all $\lambda \in \Sigma_{\theta}$ it holds
\begin{align}
\label{Eq: Endpoint Lp-Lq for resolvent}
 \lvert \lambda \rvert^{1 - \sigma} \| (\lambda + A_p)^{-1} \BP_{p , \Omega} \|_{\CL (\L^p (\Omega ; \BC^n) , \L^q (\Omega ; \BC^n))} \leq C.
\end{align}
Moreover, if additionally $\sigma < 1 / 2$, there exists $C > 0$ such that
\begin{align}
\label{Eq: Endpoint Lp-Lq for gradient estimates}
 \lvert \lambda \rvert^{1 / 2 - \sigma} \| \nabla (\lambda + A_p)^{-1} \BP_{p , \Omega} \|_{\CL (\L^p (\Omega ; \BC^n) , \L^q (\Omega ; \BC^{n^2}))} \leq C.
\end{align}
Complex interpolation between~\eqref{Eq: Maximal regularity small p} and~\eqref{Eq: Endpoint Lp-Lq for resolvent} yields the validity of~\eqref{Eq: Endpoint Lp-Lq for resolvent} for all $p \leq q < n$ that both satisfy Convention~\ref{Conv} and $\sigma \leq 1 / 2$. Furthermore, complex interpolation of~\eqref{Eq: Endpoint Lp-Lq for gradient estimates} with
\begin{align*}
 \lvert \lambda \rvert^{1 / 2} \| \nabla (\lambda + A_2)^{-1} \BP_{2 , \Omega} \|_{\CL (\L^2 (\Omega ; \BC^n) , \L^2 (\Omega ; \BC^{n^2}))} \leq C,
\end{align*}
(which follows as usual by testing the resolvent equation with the solution $u$), implies~\eqref{Eq: Endpoint Lp-Lq for gradient estimates} for all $p \leq q < n$ that satisfy $p \leq 2$, $\sigma < 1 / 2$, and Convention~\ref{Conv}. \par
Next, employing Proposition~\ref{Prop: Transference} yields that for all $p \leq q < n$ that both satisfy Convention~\ref{Conv} and $\sigma \leq 1 / 2$ there exists a constant $C > 0$ such that for all $t > 0$ it holds
\begin{align}
\label{Eq: Lp-Lq for the semigroup restricted}
 t^{\sigma} \| T(t) \BP_{p , \Omega} \|_{\CL (\L^p (\Omega ; \BC^n) , \L^q (\Omega ; \BC^n))} \leq C.
\end{align}
To get rid of the condition $\sigma \leq 1 / 2$, employ for some suitable $k \in \BN$ the semigroup law $T(t) = T(t / k)^k$ and use~\eqref{Eq: Lp-Lq for the semigroup restricted} $k$ times in a row. This implies the validity of~\eqref{Eq: Lp-Lq estimates} but only if the additional condition $q < n$ is satisfied. \par
Concerning the $\L^p$-$\L^q$-estimates for the gradient of the Stokes semigroup, Proposition~\ref{Prop: Transference} implies that for all $p \leq q < n$ that satisfy $p \leq 2$, $\sigma < 1 / 2$, and Convention~\ref{Conv} that there exists $C > 0$ such that for all $t > 0$ it holds
\begin{align}
\label{Eq: Lp-Lq gradient estimates restricted}
 t^{\sigma + 1 / 2} \| \nabla T(t) \BP_{p , \Omega} \|_{\CL (\L^p (\Omega ; \BC^n) , \L^q (\Omega ; \BC^{n^2}))} \leq C.
\end{align}
To get rid of the condition $\sigma < 1 / 2$, employ the semigroup law $T(t) = T(t / 2) T(t / 2)$ and use first~\eqref{Eq: Lp-Lq gradient estimates restricted} and then~\eqref{Eq: Lp-Lq estimates}. This implies the validity of~\eqref{Eq: Lp-Lq gradient estimates}. \par
Finally, we combine~\eqref{Eq: Lp-Lq for the semigroup restricted} and~\eqref{Eq: Lp-Lq gradient estimates restricted} in order to deduce~\eqref{Eq: Lp-Lq estimates} for the whole range of numbers $p$ and $q$. Indeed, let first $p = 2$ and $q \geq n$ satisfying Convention~\ref{Conv} (we only proceed, if such a number $q$ exists, if not, then the proof is already finished). Let $f \in \L^2 (\Omega ; \BC^n)$ and $2 \leq r < n$, $\alpha \in [0 , 1]$ with $1 / r - 1 / q = \alpha / n$. Then, by the Gagliardo--Nirenberg inequality, it holds
\begin{align}
\label{Eq: Final interpolation}
 \| T(t) \BP_{2 , \Omega} f \|_{\L^q (\Omega ; \BC^n)} &\leq C \| \nabla T(t) \BP_{2 , \Omega} f \|_{\L^r (\Omega ; \BC^n)}^{\alpha} \| T(t) \BP_{2 , \Omega} f \|_{\L^r (\Omega ; \BC^n)}^{1 - \alpha} \\
 &\leq C t^{- \frac{\alpha}{2} - \frac{n}{2} (\frac{1}{2} - \frac{1}{r})} \| f \|_{\L^2 (\Omega ; \BC^n)}.
\end{align}
Notice that $\alpha / 2 + n (1 / 2 - 1 / r) / 2 = n (1 / 2 - 1 / q) / 2$. Performing another complex interpolation between~\eqref{Eq: Final interpolation} and the uniform estimate
\begin{align*}
 \| T(t) \BP_{q , \Omega} \|_{\CL(\L^q (\Omega ; \BC^n))} \leq C \qquad (t > 0)
\end{align*}
delivers~\eqref{Eq: Lp-Lq estimates} for $2 \leq p \leq q$. Using the semigroup law $T(t) = T(t / 2) T(t / 2)$ together with~\eqref{Eq: Lp-Lq for the semigroup restricted} then delivers the estimate for the desired range of numbers $p$ and $q$. \qed

\subsection*{Proof of Theorem~\ref{Thm: Navier-Stokes}}

\noindent The existence part follows by the usual iteration scheme. Notice, that in the classical literature, see, \textit{e.g.},~\cite{Giga, K}, it is required that the semigroup satisfies $\L^p$-$\L^q$-estimates and gradient estimates in $\L^3$. This is especially used by Kato in~\cite{K}. In particular, he obtains bounds on the gradient of the solution to the Navier--Stokes equations. However, if one is only interested into a construction of solutions to the Navier--Stokes equations with the properties of Theorem~\ref{Thm: Navier-Stokes}, \textit{i.e.}, without a control on the gradient of the solution, then one can perform the iteration scheme carried out by Giga~\cite{Giga}. Notice that this iteration scheme can be carried out with the weaker estimates proven in Theorem~\ref{th-1.1}. However, this is not stated in~\cite{Giga} but is presented in detail in~\cite[Sec.~6.3]{Tol-phd}. \qed


\begin{thebibliography}{99}
\bibitem{Bog}M.~E.~Bogovski\u{\i}.
\newblock {\em Solution of the first boundary value problem for an equation of continuity of an incompressible medium\/}.
\newblock Dokl.\@ Akad.\@ Nauk SSSR \textbf{248} (1979), no.~5, 1037--1040.

\bibitem{Borchers_Miyakawa} W.~Borchers and T.~Miyakawa.
\newblock {\em Algebraic $L^2$ decay for Navier-Stokes flows in exterior domains\/}.
\newblock Acta.\@ Math.\@ \textbf{165} (1990), no.~3-4, 189--227.

\bibitem{Borchers_Sohr} W.~Borchers and H.~Sohr.
\newblock {\em On the semigroup of the Stokes operator for exterior domains in $L^q$-spaces\/}.
\newblock Math.\@ Z.\@ \textbf{196} (1987), no.~3, 415--425.

\bibitem{Cannarsa_Vespri} P.~Cannarsa and V.~Vespri.
\newblock {\em On maximal $L^p$ regularity for the abstract Cauchy problem\/}.
\newblock Boll.\@ Un.\@ Mat.\@ Ital.\@ (6) \textbf{5} (1986), no.~1, 165--175.

\bibitem{Denk_Hieber_Pruess} R.~Denk, M.~Hieber, and J.~Pr\"uss.
\newblock {\em $\CR$-boundedness, Fourier multipliers and problems of elliptic and parabolic type\/}.
\newblock Mem.\@ Amer.\@ Math.\@ Soc.\@ \textbf{166} (2003), no.~788.

\bibitem{FMM}E.~Fabes, O.~M\'endez, and M.~Mitrea.
\newblock {\em Boundary layers on Sobolev-Besov spaces and Poisson's equation for the Laplacian in Lipschitz domains\/}.
\newblock J.\@ Funct.\@ Anal.\@ \textbf{159} (1998), no.~2, 323--368.

\bibitem{Farwig_Sohr}
R.~Farwig and H.~Sohr.
\newblock {\em Generalized resolvent estimates for the Stokes system in bounded and unbounded domains\/}.
\newblock J.\@ Math.\@ Soc.\@ Japan \textbf{46} (1994), no.~4, 607--643.

\bibitem{G}G.~P.~Galdi.
\newblock An introduction to the mathematical theory of the Navier-Stokes equations. Steady-state problems. Second edition.
\newblock Springer Monographs in Mathematics. Springer, New York, 2011.

\bibitem{GHHa}M.~Geissert, H.~Heck, and M.~Hieber.
\newblock {\em On the equation $\dv u=f$ and Bogovski\u{\i}'s operator in Sobolev spaces of negative order\/}.
\newblock In: E.\@ Koelink \textit{et al}., eds., Festschrift Philippe Cl\'{e}ment, Birkh\"{a}user, Basel, 2006, 113--121.

\bibitem{GHHSK}M.~Geissert, M.~Hess, M.~Hieber, C.~Schwartz, and K.~Stravrakidis.
\newblock {\em Maximal $L^p-L^q$-estimates for the Stokes equation: a short proof of Solonnikov's theorem\/}.
\newblock J.\@ Math.\@ Fluid Mech.\@ \textbf{12} (2010), no.~1, 47--60.

\bibitem{Giga}
Y.~Giga.
\newblock {\em Solutions for semilinear parabolic equations in $L^p$ and regularity of weak solutions of the Navier-Stokes system\/}.
\newblock J.\@ Differential Equations \textbf{62} (1986), no.~2, 186--212.

\bibitem{Giga_Miyakawa}
Y.~Giga and T.~Miyakawa.
\newblock {\em Solutions in $L_r$ of the Navier-Stokes Initial Value Problem\/}.
\newblock Arch.\@ Rational Mech.\@ Anal.\@ \textsc{89} (1984), no.~3, 267--281.

\bibitem{Giga_Sohr} Y.~Giga and H.~Sohr.
\newblock {\em On the Stokes operator in exterior domains\/}.
\newblock J.\@ Fac.\@ Sci.\@ Univ.\@ Tokyo Sect.\@ IA Math.\@ \textbf{36} (1989), no.~1, 103--130.

\bibitem{GS}Y.~Giga and H.~Sohr.
\newblock {\em Abstract $L^p$ estimates for the Cauchy problem with applications to the Navier-Stokes equations
in exterior domains\/}.
\newblock J.\@ Funct.\@ Anal.\@ \textbf{102} (1991), no.~1, 72--94. 

\bibitem{Haase}
M.~Haase.
\newblock The {F}unctional {C}alculus for {S}ectorial {O}perators. Operator Theory: Advances and Applications, vol.~169,
\newblock Birkh{\"a}user, Basel, 2006.

\bibitem{HNVW}T.~Hyt\"{o}nen, J.~van~Neerven, M.~Veraar, and L.~Weis.
\newblock {\em The $H^\infty$-functional calculus\/}.
\newblock Analysis in Banach Spaces: Volume II: Probabilistic Methods and Operator Theory (2017), 359--478.

\bibitem{Iwashita} H.~Iwashita.
\newblock {\em $L_q$-$L_r$ estimates for solutions of the nonstationary Stokes equations in an exterior domain and the Navier-Stokes initial value problem in $L_q$ spaces\/}.
\newblock Math.\@ Ann.\@ \textbf{285} (1989), no.~2, 265--288.

\bibitem{Jonsson_Wallin} A.~Jonsson and H.~Wallin.
\newblock {\em Function spaces on subsets of {${\BR}^n$}\/}.
\newblock Math.\@ Rep.\@ \textbf{2} (1984), no.~1.

\bibitem{Kalton_Weis}
N.~J.~Kalton and L.~Weis.
\newblock {The $H^{\infty}$-calculus and sums of closed operators\/}.
\newblock Math.\@ Ann.\@ \textbf{321} (2001), no.~2, 319--345.

\bibitem{K}T.~Kato.
\newblock {\em Strong $L^p$-solutions of the Navier-Stokes equation in $R^m$, with applications to weak solutions\/}.
\newblock Math.\@ Z.\@ \textbf{187} (1984), no.~4, 471--480.

\bibitem{Kozono}
H.~Kozono.
\newblock {\em Weak solutions to the Navier--Stokes equations with test functions in the weak-$L^n$ space\/}.
\newblock Tohoku Math.\@ J.\@ (2) \textbf{53} (2001), no.~1, 55--79.

\bibitem{KW}P.~C.~Kunstmann and L.~Weis.
\newblock {\em New criteria for the $H^\infty$-calculus and the Stokes operator on bounded Lipschitz domains\/}.
\newblock J.\@ Evol.\@ Equ.\@ \textbf{17} (2017), no.~1, 387--409.

\bibitem{LM}J.~Lang and O.~M\'{e}ndez.
\newblock {\em Potential techniques and regularity of boundary value problems in exterior non-smooth domains\/}.
\newblock Potential Anal.\@ \textbf{24} (2006), no.~4, 385--406.

\bibitem{Dorina_Mitrea} D.~Mitrea.
\newblock {\em The method of layer potentials for non-smooth domains with arbitrary topology\/}.
\newblock Integral Equations Operator Theory \textbf{29} (1997), no.~3, 320--338.

\bibitem{MM}M.~Mitrea and S.~Monniaux.
\newblock {\em The regularity of the Stokes operator and the Fujita-Kato approach to the Navier-Stokes initial value problem in Lipschitz domains\/}.
\newblock J.\@ Funct.\@ Anal.\@ \textbf{254} (2008), no.~6, 1522--1574.


\bibitem{She}Z.~Shen.
\newblock {\em Resolvent estimates in $L^p$ for the Stokes operator in Lipschitz domains\/}.
\newblock Arch.\@ Rational Mech.\@ Anal.\@ \textbf{205} (2012), no.~2, 395--424.

\bibitem{Shibata_Shimizu}Y.~Shibata and S.~Shimizu.
\newblock {\em On the maximal $L_p$-$L_q$ regularity of the Stokes problem with first order boundary condition; model problems\/}.
\newblock J.\@ Math.\@ Soc.\@ Japan \textbf{64} (2012), no.~2, 561--626.

\bibitem{Solonnikov}V.~A.~Solonnikov.
\newblock {\em Estimates for solutions of nonstationary Navier-Stokes equations\/}.
\newblock J.\@ Soviet Math.\@ \textbf{8} (1977), no.~4, 467--529.

\bibitem{T}
M.~E.~Taylor.
\newblock {\em Incompressible fluid flows on rough domains\/}. 
\newblock In: Semigroups of operators: theory and applications (Newport Beach, CA, 1998). Progr.\@ Nonlinear Differential Equations Appl., vol.~42, Birkh\"auser, Basel, 2000, 320--334.

\bibitem{Tol}P.~Tolksdorf.
\newblock {\em On the $\mathrm{L}^p$-theory of the Navier-Stokes equations on three-dimensional bounded Lipschitz domains\/}.
\newblock Math.\@ Ann.\@ \textbf{371} (2018), no.~1-2, 445--460.

\bibitem{Tol-phd}P.~Tolksdorf.
\newblock {\em On the $\mathrm{L}^p$-theory of the Navier-Stokes equations on Lipschitz domains\/}.
\newblock PhD thesis, TU Darmstadt, 2017, \href{http://tuprints.ulb.tu-darmstadt.de/5960/}{http://tuprints.ulb.tu-darmstadt.de/5960/}.

\bibitem{Voigt}J.~Voigt.
\newblock {\em Abstract Stein interpolation\/}.
\newblock Math.\@ Nachr.\@ \textbf{157} (1992), 197--199.

\bibitem{Weis}L.~Weis.
\newblock {\em Operator-valued Fourier multiplier theorems and maximal $L_p$-regularity\/}.
\newblock Math.\@ Ann.\@ \textbf{319} (2001), no.~4, 735--758.

\end{thebibliography}
\end{document}